\newtheorem{lem}{Lemma}[section]
\newtheorem{prop}[lem]{Proposition}
\newtheorem{rem}[lem]{Remark}
\newtheorem{theo}[lem]{Theorem}
\newtheorem{cor}[lem]{Corollary}
\newtheorem*{ack}{Acknowledgement}
\newtheorem*{algopatience}{Heap sorting for a sequence $(U_i,\nu_i)$}
\renewcommand{\P}{\mathbb{P}}
\newcommand{\R}{\mathbb{R}}
\newcommand{\N}{\mathbb{N}}
\newcommand{\E}{\mathbb{E}}
\newcommand{\defeq}{:=}
\newcommand{\dr}{\mathbf{R}}
\begin{document}

\title{From Hammersley's lines to Hammersley's trees.}
\author{\textsc{Basdevant A.-L.\footnote{Laboratoire Modal'X, Université Paris Ouest, France. email: anne-laure.basdevant@u-paris10.fr}, Gerin L.\footnote{CMAP, Ecole Polytechnique, France. email: gerin@cmap.polytechnique}, Gouéré J.-B.\footnote{Laboratoire de Mathématiques, Université de Tours, France. email: jean-baptiste.gouere@lmpt.univ-tours.fr}, Singh A.\footnote{Laboratoire de Mathématiques d'Orsay, Univ. Paris-Sud, CNRS, France. email: arvind.singh@math.u-psud.fr}}}
\date{\today}   
\maketitle

\begin{abstract}

We construct a stationary random tree, embedded in the upper half plane, with prescribed offspring distribution and whose vertices are the atoms of a unit Poisson point process. This process which we call \emph{Hammersley's tree process} extends the usual \emph{Hammersley's line process}. Just as Hammersley's process is related to the problem of the longest increasing subsequence, this model also has a combinatorial interpretation: it counts the number of heaps (\emph{i.e.} increasing trees) required to store a random permutation.\\ This problem was initially considered  by Byers \emph{et. al} (2011) and Istrate and Bonchis (2015) in the case of regular trees. We show, in particular, that the number of heaps grows logarithmically with the size of the permutation.
\end{abstract}

\bigskip

\begin{figure}[h]
\begin{center}
\includegraphics[width=7cm]{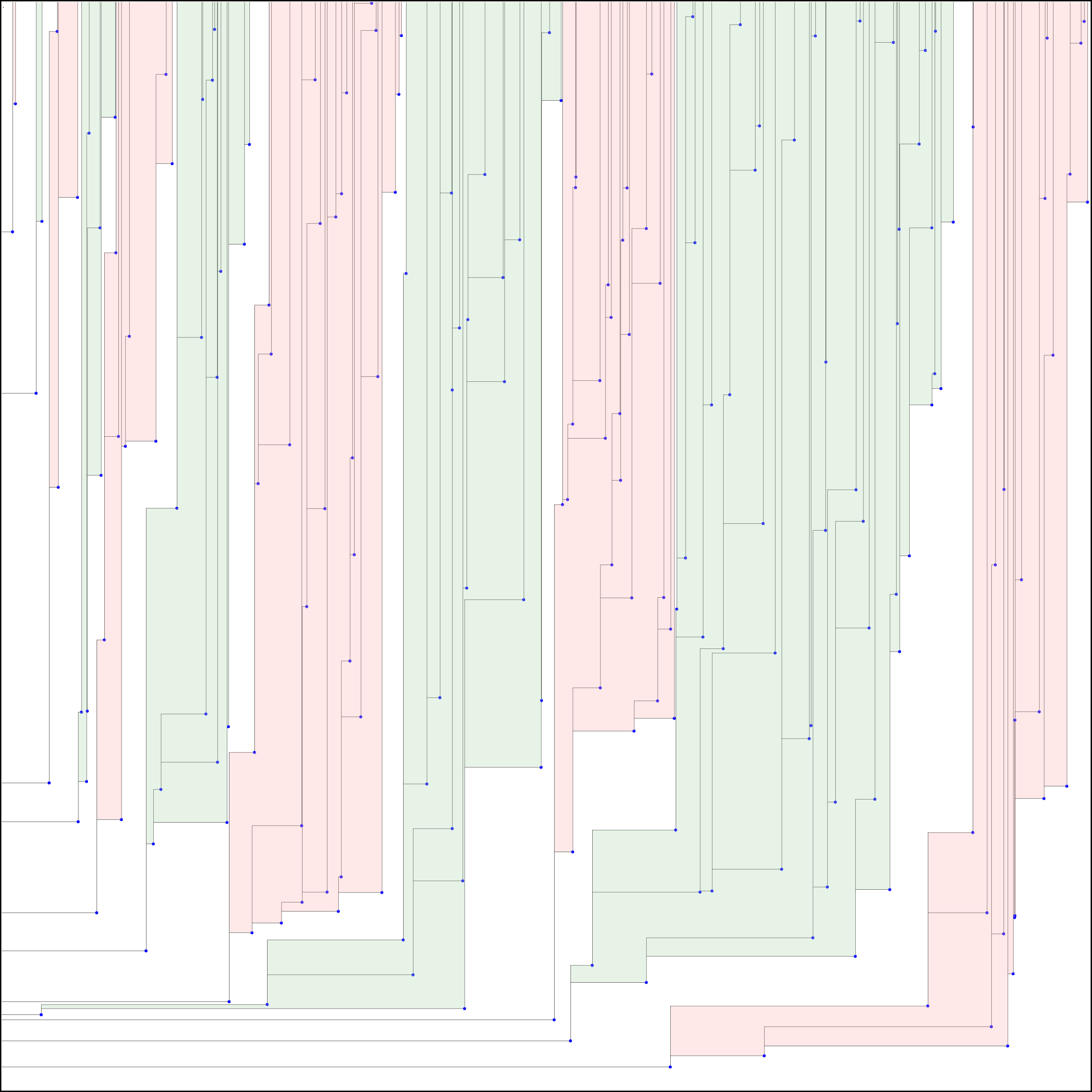}
\end{center}
\caption{Sorting a random permutation of size n=200 into binary heaps. Here, $11$ trees coloured alternatively in red and green are required.}
\end{figure}

\vfill

\noindent{\bf {\textsc MSC 2010 Classification}:} 60K35, 60G55.\\
\noindent{\bf Keywords:} Hammersley's process; Heap sorting, Patience sorting; Longest increasing subsequences; Interacting particles systems. 

\vfill

\newpage

\section{Introduction}

Given a permutation $\sigma$ of $\{1,\ldots,n\}$, a decreasing sub-sequence of $\sigma$ of length $\ell$ is a sequence $(\sigma_{i_1},\ldots,\sigma_{i_\ell})$ such that 
\begin{equation*}
i_1 < i_2 < \ldots < i_\ell \quad \hbox{and} \quad \sigma_{i_1} > \sigma_{i_2} > \ldots > \sigma_{i_\ell}.
\end{equation*}
The question of estimating the length $\ell(\sigma)$ of the longest decreasing sub-sequence of $\sigma$ when the permutation is sampled uniformly among all permutation of $\{1,\ldots,n\}$ also known as Ulam's problem, has a long standing history. In a seminal paper, Hammersley \cite{Hammersley} proved that, as $n$ goes to infinity, the almost-sure asymptotic $\ell(\sigma) \sim c\sqrt{n}$ holds for some constant $\pi/2 < c < e$. The bounds on $c$ were improved by Kingman in \cite{Kingman} and it was later shown independently by Logan and Shepp \cite{LoganShepp} and Ver{\v{s}}ik and Kerov \cite{VershikKerov} that, in fact, $c=2$. In recent years, important progress has been made in understanding finer properties of $\ell(\sigma)$. In particular, thanks to a remarkable connection with random matrices Baik, Deift and Johansson \cite{BaikDeiftJohansson} showed that the fluctuations of $\ell(\sigma)$ around its mean are of order $n^{1/6}$ and, correctly normalized, converge to the Tracy-Widom distribution. In fact, so much work was done on this subject that there is now a whole book devoted to the properties of the longest increasing sub-sequences of a random permutation \cite{Romik}. We refer the reader to it for additional details.

One remarkable property of this model is the following self-duality result: the length $\ell(\sigma)$ of the longest \emph{decreasing} sub-sequence of $\sigma$ is also equal to the minimum number of disjoint \emph{increasing} sub-sequences of $\sigma$ required to partition $\{1,\ldots,n\}$. This results follows from the fact that a minimal partitioning may be constructed by 
grouping the $\sigma_i$'s in ``stacks'' via the following greedy algorithm. Start with a single stack containing only $\sigma_1$. Now, if $\sigma_2$ is larger than $\sigma_1$, put it on top of $\sigma_1$ (\emph{i.e.} in the same stack), otherwise put $\sigma_2$ apart in a new stack. By induction, at each step $i$, put $\sigma_i$ on top of the stack whose top value is the largest one below $\sigma_i$. If all the stacks have a top value larger than $\sigma_i$, then put $\sigma_i$ in a new stack.

This procedure outputs a set of $\dr(\sigma)$ increasing stacks (equivalently a partition of $\sigma$ into $\dr(\sigma)$ increasing sub-sequences). It is easy to check that the number of stacks obtained by this algorithm is indeed minimal among all such partitioning and that, furthermore, we can construct a decreasing sub-sequence with maximal length by picking out exactly one element in each stack. See Figure \ref{fig:stack} for an example and some more details. Therefore, the random variables $\ell(\sigma)$ and $\dr(\sigma)$ are, in fact, equal.

It is useful to couple permutations with different lengths on the same space. We do so by considering a sequence $(U_i,\, i\geq0)$ of i.i.d. random variables with uniform distribution on $[0,1]$. For each $n$, let $\sigma^n$ to be the permutation of $\{1,\ldots,n\}$ which re-orders the $n$ first $U_i$'s increasingly \emph{i.e.} such that $U_{\sigma^n_1} < \ldots < U_{\sigma^n_n}$. By construction, $\sigma^n$ is uniformly distributed among all permutations of length $n$. Furthermore, the greedy algorithm described above is compatible in $n$ hence the random variables $\dr(n) \defeq \dr(\sigma^n)$ can be computed iteratively: 
\begin{itemize}
\item Start at time $1$ with $\dr(1) =1$ and a single stack containing only $U_1$.
\item At time $n$, we have $\dr(n)$ stacks. Each stack contains values in increasing order with the largest value on top. 
\item At time $n+1$, we add $U_{n+1}$ on top of the stack which has the largest value smaller than $U_{n+1}$. If there is no such stack, we put $U_{n+1}$ in a new stack.
\end{itemize}
This algorithm is called \emph{patience sorting} in reference to the eponymous card game \emph{c.f.} \cite{AldousDiaconis99}. One can think of it as the optimal way to stream incoming resources into ordered stacks while keeping the number of stacks minimum at all time. Hammersley's result is thus equivalent to the following almost-sure asymptotic on the number of stacks:
$$
\lim_{n\to\infty} \frac{\dr(n)}{\sqrt{n}} = 2 \hbox{ a.s.}
$$
In \cite{Byersetal}, Byers \emph{et al.} proposed a generalization of the patience sorting algorithm where the incoming $U_i$'s are not put into stacks anymore but instead into another classical data structure called a \emph{heap}. A heap is labelled tree satisfying the condition that the label of any node is always greater or equal than the label of its parent. Heaps are extremely common in computer science, especially since they allow implementation of efficient algorithms for searching and sorting. Of course, stacks are particular kind of trees so the classical patience sorting into stacks corresponds to sorting into unary heaps. However, as soon as the heaps have exponential growth, one can expect the patience sorting algorithm to behave differently and require asymptotically a much smaller number of heaps. This question was raised by  Istrate and Bonchis in \cite{IstrateBonchis} who predicted that the number $\dr(n)$ should be of logarithmic order. More precisely, when all heaps are binary trees, they conjectured that 
\begin{equation}\label{gdconj}
\dr(n) \sim \frac{1}{2}(1+\sqrt{5})\log n
\end{equation}
as $n$ goes to infinity. Yet, concerning this asymptotic, they only pointed out the crude asymptotic lower bound $\dr(n) \geq \log n$ which comes from the fact that each new running minimum $U_i$ must necessarily be placed into a new heap.

\begin{figure}
\begin{center}
\includegraphics[width=11cm]{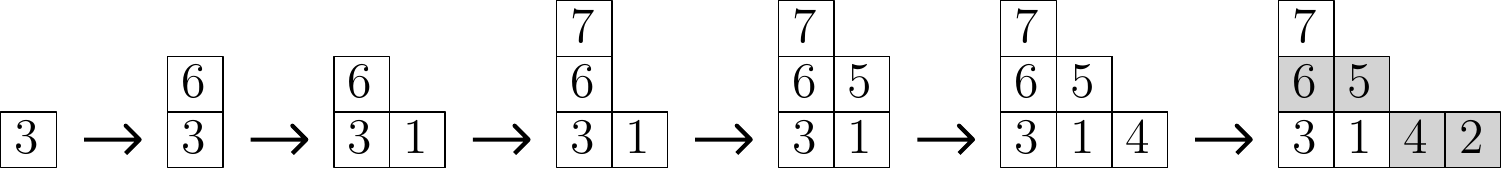}
\caption{
Example of the stack sorting algorithm for $\sigma = (3,6,1,7,5,4,2)$. The procedure outputs $4$ stacks. The elements in grey constitute a maximal decreasing sub-sequence of $\sigma$. It is obtained by picking the smallest element from the last stack and then recursively picking in the previous stack the smallest element larger than the one previously chosen.
}\label{fig:stack}
\end{center}
\end{figure}

In this paper, we consider a generalization of this model where we allow the heaps to be random Galton-Watson trees without leaves. Let $\mu$ denote a probability measure on $\N=\{1,2\ldots,\}$ which will correspond to the reproduction law of our Galton-Watson trees.  Let $((U_i,\nu_i),i\ge 1)$ be i.i.d. random variables where $U_i$ and $\nu_i$ are independent,  $U_i$ is uniform on $[0,1]$ and $\nu_i$ has distribution $\mu$. We consider the following generalized patience sorting algorithm.
\begin{algopatience}\
\begin{itemize}
\item Start at time $1$, with $\mathbf{R}(1) =1$ and a single tree containing a unique vertex $U_1$ with $\nu_1$ lives. 
\item At time $n$, we have a forest of $\dr(n)$ trees.  To each vertex of the trees is associated a pair 
$(U,\nu)$. The variable $U$ is the label of the vertex whereas $\nu$ is the maximum number of children that the vertex can have. We call number of lives remaining the difference between $\nu$ and its current number of children. A vertex is said to be \emph{alive} if it still has at least one life remaining. 
\item Each tree in the forest has the heap property \emph{i.e.}, along any branch of the tree, the labels $(U_{i_1},U_{i_2},\ldots)$ of the vertices are increasing. 

\item At time $n+1$, we add $U_{n+1}$ as the children of the vertex which is still alive and has the largest label smaller than $U_{n+1}$. If there is no such vertex, we put $U_{n+1}$ at the root of a new tree. The new vertex starts with $\nu_{n+1}$ lives whereas the number of lives of its father (if it exists) is decreased by one.
\end{itemize}
\end{algopatience}

Figure \ref{fig:heaparbre} illustrates the procedure. If $\mu$ is the dirac measure $\delta_1$, we recover the classical patience sorting algorithm into stack. For $\mu = \delta_2$, this correspond to sorting into binary heaps as considered in \cite{Byersetal,IstrateBonchis}. Just like the classical patience algorithm, it is easy to verify that this algorithm is optimal in minimizing number of heaps. In particular, since the algorithm is online, it means that knowing all the $(U_i,\nu_i)$ beforehand cannot help reducing the number of heaps. 

\begin{figure}
\begin{center}
\includegraphics[width=15cm]{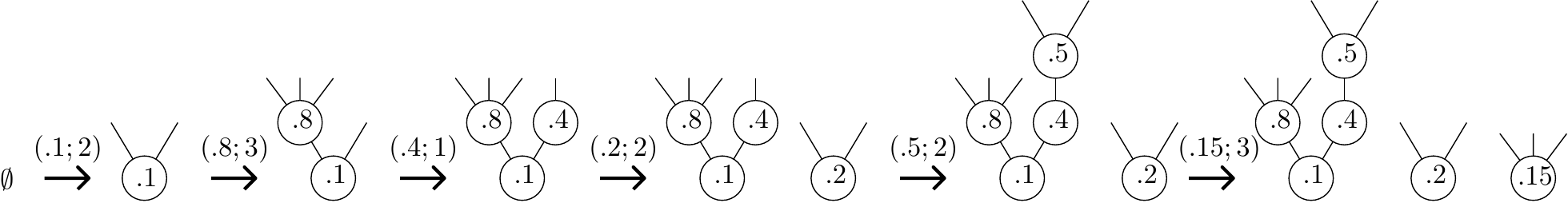}
\caption{ \label{fig:heaparbre} Example of heap patience sorting for the sequence $((.1,2),(.8,3),(.4,1),(.2,2),(.5,2),(.15,3))$. Here we have $ \dr(6) = 3$.
} 
\end{center}
\end{figure}

Our first result states that, as predicted in \cite{IstrateBonchis}, the number $\dr(n)$ of heaps grows logarithmically whenever the trees are non-degenerated.
\begin{theo}\label{theoesperance} 
For every distribution $\mu$ on $\N$ which is not the Dirac mass at $1$, there exists a constant $c_\mu \in (1,\infty)$  such that
$$\E[\dr(n)]\sim c_\mu \log n.$$
\end{theo}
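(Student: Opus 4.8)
The plan is to reduce the study of $\dr(n)$ to a single real-valued functional of the current configuration and then to analyse that functional through a scaling limit. The key observation is that a new tree is created at step $n+1$ exactly when $U_{n+1}$ falls below every currently alive vertex. Writing $m_n$ for the smallest label carried by an alive vertex at time $n$, this means a new root appears if and only if $U_{n+1}<m_n$ (and in that case $U_{n+1}$ becomes the new lowest alive vertex, so $m_{n+1}=U_{n+1}$). Since $U_{n+1}$ is uniform on $[0,1]$ and independent of the past $\mathcal{F}_n$, we get $\P(\dr(n+1)=\dr(n)+1\mid\mathcal{F}_n)=m_n$, hence
\begin{equation*}
\E[\dr(n)] = 1 + \sum_{k=1}^{n-1}\E[m_k].
\end{equation*}
It therefore suffices to prove that $k\,\E[m_k]\to c_\mu$ for some $c_\mu\in(1,\infty)$: using the elementary fact that $a_k\to c$ forces $\sum_{k\le N}a_k/k\sim c\log N$, this immediately yields $\E[\dr(n)]\sim c_\mu\log n$.

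To pin down the limit of $k\,\E[m_k]$ I would pass to the scaling in which the model becomes stationary. Rescaling labels by $n$ and reading the index on the logarithmic time scale $t=\log n$, the incoming marked points converge to a unit-intensity Poisson point process on $[0,\infty)\times\R$ (space $\times$ time) carrying i.i.d.\ $\mu$-marks, while each alive vertex, sitting at fixed microscopic label $y$, is carried by the deterministic outward drift $\dot z=z$ produced by the rescaling $z=ny=ye^{t}$. The bottom of the configuration then evolves as a Hammersley-type interacting particle system: marked particles rain down, each new particle removes one life from the highest particle lying strictly below it, and any particle whose lives are exhausted disappears. I would construct a stationary regime $\Xi_\infty$ for this dynamics — precisely the stationary \emph{Hammersley tree process} of the title — and show that the rescaled bottom profile at time $n$ converges in law to $\Xi_\infty$. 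In particular $n\,m_n\Rightarrow M_\infty:=\min\Xi_\infty$, and $c_\mu$ is defined as $\E[M_\infty]$, equivalently the intensity of roots of the stationary tree.

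Two quantitative points then remain. First, the weak convergence $n\,m_n\Rightarrow M_\infty$ must be upgraded to $n\,\E[m_n]\to\E[M_\infty]=c_\mu$, for which I would prove uniform integrability of $(n\,m_n)_n$, e.g.\ by controlling the lower tail of the bottom profile to get $\sup_n\E[(n\,m_n)^{1+\varepsilon}]<\infty$. Second, the bounds $1<c_\mu<\infty$. The inequality $c_\mu\ge1$ is immediate: the lowest alive vertex dominates the global running minimum $g_n=\min(U_1,\dots,U_n)$, so $\E[m_n]\ge\E[g_n]=1/(n+1)$ and $\E[\dr(n)]\ge H_n\sim\log n$, reflecting that every running minimum opens a tree. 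Strictness $c_\mu>1$ comes from the fact that the lowest alive vertex almost surely eventually exhausts its $\nu$ lives and dies; once it does, the lowest alive label sits \emph{strictly} above $g_n$ until the next record, so incoming points below it open non-record trees, and a stationary computation shows this surplus contributes a positive multiple of $\log n$, i.e.\ $\E[M_\infty]>1$. Finiteness $c_\mu<\infty$ is where $\mu\ne\delta_1$ enters: when the heaps may branch, a bounded number of low-lying alive vertices can absorb the incoming flux, keeping the bottom profile tight and $n\,\E[m_n]$ bounded — in sharp contrast with the degenerate stack case $\mu=\delta_1$, where $\dr(n)\sim2\sqrt n$ and $c_\mu=\infty$.

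The main obstacle is the construction of the stationary particle system and the proof that it genuinely attracts the finite-$n$ dynamics. Because the vertex that receives a new child may lie arbitrarily far below the incoming point, the evolution of the bottom of the configuration is not autonomous, and taming this long-range interaction well enough to obtain both the weak limit and the uniform integrability — and hence the \emph{existence} of $\lim_k k\,\E[m_k]$, rather than mere finiteness of its $\liminf$ and $\limsup$ — is the delicate part; the remaining steps are bookkeeping around the reduction above.
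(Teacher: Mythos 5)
Your opening reduction is correct, and it is in fact identical to the paper's: a new tree is created at step $n+1$ exactly when $U_{n+1}$ falls below the lowest alive label $m_n$, and since the life/death structure of the configuration depends only on the relative order of the labels (which is independent of the values of the order statistics), one has $\E[m_k]=\E[D_k+1]/(k+1)$, so your identity $\E[\dr(n)]=1+\sum_{k<n}\E[m_k]$ is precisely the recursion \eqref{recDn}. The genuine gap is everything after this reduction. You correctly isolate what must be shown — that $k\,\E[m_k]$ converges to a limit $c_\mu$ lying in $(1,\infty)$ — but your route to it (construct a stationary scaling limit $\Xi_\infty$, prove the rescaled bottom profile is attracted to it, upgrade weak convergence via uniform integrability, and read off $c_\mu=\E[M_\infty]$) is a programme, not a proof: none of the construction, the attraction, the uniform integrability, or the finiteness is carried out, and you yourself flag them as ``the main obstacle.'' In particular your argument for $c_\mu<\infty$ when $\mu\neq\delta_1$ — that a bounded number of low-lying alive vertices ``can absorb the incoming flux'' — is a heuristic with no quantitative content, yet this finiteness is exactly the hard content of the theorem (as you note, it fails for $\mu=\delta_1$). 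Similarly, both the existence of the limit and the strict inequality $c_\mu>1$ are, in your plan, consequences of the unproven stationary picture.

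The paper closes these gaps by different and non-interchangeable means, and the logical order matters. Existence of $\lim_n\E[D_n+1]\in(1,\infty]$, i.e.\ of $\lim_k k\,\E[m_k]$, comes from a monotonicity argument requiring no stationary object: revealing the atoms from left to right (growing the box by its right side), a second-class-particle description shows the corresponding dead-prefix process $D'_n$ is non-decreasing in $n$, while scaling invariance of the Poisson process gives $D'_n\overset{\mathrm{law}}{=}D_n$; this yields \eqref{limDn}, and strictness $c_\mu>1$ follows since $\E[D_n+1]>1$ for some finite $n$. Finiteness is then obtained by comparison with an exactly solvable case: for geometric $\mu$ the stationary measures with sources and sinks are computed explicitly (Theorems \ref{Th:SourcesStationnaires} and \ref{Th:RacinesStationnaires}), giving $c_\mu=1/(1-\alpha)$ in Theorem \ref{theogeometric}; then Proposition \ref{propcouplage} — among offspring laws of equal mean, the two-point law supported on $\{\ell,\ell+1\}$ minimizes $\E[\dr]$, proved by yet another second-class-particle coupling — combined with the trivial stochastic domination of $\mu'(1)=\mu(1)$, $\mu'(2)=1-\mu(1)$ by $\mu$, sandwiches any $\mu\neq\delta_1$ below a geometric law. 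Finally, and most relevantly for your plan: the stationary half-plane tree process for general $\mu$, which is the attractor you propose to build \emph{first}, is constructed in the paper (Theorem \ref{Theo:HalfPlaneGeneral}) only \emph{after} Theorem \ref{theoesperance}, and its proof uses $c_\mu<\infty$ to bound $\limsup_b\E[R(bt)-R(bs)]$. So as it stands your proposed order of argument is circular in spirit: the stationary regime from which you would deduce finiteness is, in this paper, a consequence of that very finiteness.
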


Its turns out that this model is exactly solvable when the trees have a geometric reproduction law. In this case, we obtain a more precise result.  

\begin{theo}\label{theogeometric} Suppose that $\mu$ is a geometric distribution with mean $1/\alpha \in (1,\infty)$. We have
$$
\frac{\dr(n)}{\log n} \underset{n\to\infty}{\longrightarrow} c_\mu = \frac{1}{1-\alpha}
$$
where the convergence holds a.s. and in $L^1$.
\end{theo}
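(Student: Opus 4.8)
The plan is to pass from the combinatorial algorithm to the point process $S_n\subset[0,1]$ of labels of the currently \emph{alive} vertices, and to exploit the fact that a geometric number of lives is memoryless. Since $\mu(k)=\alpha(1-\alpha)^{k-1}$ has mean $1/\alpha$, a vertex that has just produced a child exhausts its last life (dies) with probability $\alpha$ and survives with probability $1-\alpha$, independently of everything in its past. Consequently the evolution of $S_n$ becomes a clean Markov dynamics on configurations of $[0,1]$: when $U_{n+1}$ arrives, if it falls below $\min S_n$ it opens a new tree (a \emph{root}); otherwise its parent is the largest point of $S_n$ below it, and that parent is deleted from $S_n$ with probability $\alpha$. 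In every case a new alive point is added at $U_{n+1}$. This is exactly Hammersley's tree process confined to $[0,1]$. Theorem \ref{theoesperance} already provides some $c_\mu\in(1,\infty)$ with $\E[\dr(n)]\sim c_\mu\log n$, so the real task is to identify $c_\mu=\tfrac{1}{1-\alpha}$ and to strengthen the mode of convergence.

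First I would record two elementary facts about this dynamics. Write $L(n)=\#S_n$ and $M_n=\min S_n$. A death at step $n+1$ requires an alive vertex below $U_{n+1}$, i.e. $U_{n+1}>M_n$, together with the independent $\alpha$-coin, so $\E[L(n+1)-L(n)\mid\mathcal F_n]=1-\alpha(1-M_n)\to 1-\alpha$ because $M_n\to 0$; a martingale strong law then gives $L(n)/n\to 1-\alpha$ almost surely. On the other hand a root is created at step $n+1$ precisely when $U_{n+1}<M_n$, an event of conditional probability $M_n$; hence $\dr(n+1)-\dr(n)$ is Bernoulli with mean $M_n$, and in particular $\E[\dr(n)]=\sum_{i<n}\E[M_i]$. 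Everything is thus reduced to understanding the typical size of the lowest alive label $M_n$.

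The heart of the matter is to show that, at the scale $1/n$, the bottom of $S_n$ settles into a self-similar regime whose intensity yields $\E[M_n]\sim \tfrac{1}{(1-\alpha)n}$. Here I would use the exact solvability of the geometric model: rescaling space by the bulk density $R=(1-\alpha)n$ and letting $n\to\infty$, the rescaled boundary profile $G(\xi)=\lim\E[\#(S_n\cap[0,\xi/R])]$ is governed by an explicit fixed-point equation expressing the balance between creations (every incoming point produces an alive vertex at its own location), deaths (each point kills the alive vertex immediately below it with probability $\alpha$), and the root creations at the very bottom. Since the bottom behaves like a Poisson configuration, $\P(M_n>x)\approx e^{-G(Rx)}$, whence $\E[M_n]\approx K/R$ with $K=\int_0^\infty e^{-G(\xi)}\,d\xi$ and therefore $c_\mu=K/(1-\alpha)$. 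The exactly-solvable input, which is the main obstacle and the only place where the geometric assumption is truly indispensable, is to solve the fixed-point equation and prove $K=1$, equivalently that near $0$ the process $S_n$ looks asymptotically like a homogeneous Poisson process of intensity $(1-\alpha)n$. This is precisely the computation carried out on the stationary Hammersley tree process constructed earlier, and it pins down $c_\mu=\tfrac{1}{1-\alpha}$.

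Finally I would upgrade convergence of the mean to the almost sure and $L^1$ statements. Writing $\dr(n)=\sum_{i<n}M_i+A_n$ with $A_n=\sum_{i<n}(\mathbf{1}_{\{U_{i+1}<M_i\}}-M_i)$, the process $A_n$ is a martingale with increments bounded by $1$ and predictable bracket $\langle A\rangle_n\le\sum_{i<n}M_i=O(\log n)$, so the strong law for martingales gives $A_n=o(\log n)$ almost surely. It then remains to show $\sum_{i<n}M_i\sim c_\mu\log n$ almost surely, which I expect to obtain by passing to the logarithmic clock $s=\log n$: in this time scale the root-creation mechanism of the stationary tree process is stationary and ergodic with intensity $c_\mu$, so Birkhoff's ergodic theorem yields $\dr(n)/\log n\to c_\mu$ almost surely. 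A second-moment bound $\E[\dr(n)^2]=O((\log n)^2)$, read off from the same martingale decomposition, provides uniform integrability of $\dr(n)/\log n$ and hence the $L^1$ convergence.
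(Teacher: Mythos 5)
Your reduction of the problem to the minimum $M_n$ of the alive set, the memorylessness observation (each arrival kills its parent with probability $\alpha$, independently of the past), the identity $\E[\dr(n)]=\sum_{i<n}\E[M_i]$, and the martingale decomposition $\dr(n)=\sum_{i<n}M_i+A_n$ with $\langle A\rangle_n=O(\log n)$ are all sound, and they match the spirit of the paper (this is the red/blue representation $\tilde H$ and the recursion \eqref{recDn}). The problem is that the two steps you label as the ``heart of the matter'' are asserted, not proved, and they are exactly where the paper has to work hardest. Your claim $K=1$, i.e.\ that the bottom of $S_n$ is asymptotically Poisson with the bulk intensity $(1-\alpha)n$, is indeed what the pseudo-stationarity results give --- but only for the \emph{modified} process with sinks (Theorem \ref{Th:SourcesStationnaires} and Corollary \ref{cor:sourcefromzero} concern $H^{\emptyset,\mathcal{S}_0}$, not $H$). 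You cannot transfer this to the true process (started empty, no sinks) by saying it is ``precisely the computation carried out on the stationary process'': sinks remove lives from the rightmost alive particle, and because the dynamics is \emph{not} compatible under growth of the right boundary (this is the whole point of Section \ref{sect-augmentationdroite} and the second-class particle), their influence propagates into the bulk and can reach the minimum. The paper's proof of the matching lower bound on $R(t)$ is an elaborate coupling designed precisely to control this: an auxiliary independent PPP $\Xi'$, the finiteness of the expected number of trees crossing a vertical boundary (Corollary \ref{arbrepuit}, which itself needs the environment-seen-from-a-particle analysis of Propositions \ref{Prop:StationaritePalmGeometrique} and \ref{Prop:LimiteXt}), and the domain-deformation comparison of Proposition \ref{Prop:couplagesousdomaine}. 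Nothing in your proposal supplies this step or replaces it by another argument; the informal ``fixed-point equation'' for $G(\xi)$ is never written down, let alone solved.

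The same gap reappears in your almost-sure statement: you invoke stationarity and ergodicity of ``the root-creation mechanism in the logarithmic clock,'' but that stationary structure is exactly what has to be constructed and then compared to the true process. In the paper, a.s.\ convergence comes from a two-sided sandwich, $\hat R^{\mathcal{C}_1,\mathcal{S}_1}(t)-|\hat{\mathcal{T}}|\le R(t)\le R^{\mathcal{C}_1,\mathcal{S}_1}(t)+\sum_{(u,\nu)\in\mathcal{C}_1}(\nu-1)$, where the stationary quantities are explicit Poisson counts with parameter $\frac{1}{1-\alpha}\log((1-\alpha)t)$ and the error terms are a.s.\ finite; the hard half of that sandwich is again the coupling above. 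One comment in your favour on the last step: once a.s.\ convergence and $\E[\dr(n)]\sim c_\mu\log n$ are known, the $L^1$ convergence follows directly from Scheff\'e's lemma (nonnegative variables, a.s.\ convergence, convergence of means), so your proposed second-moment bound, whose proof is also not immediate from your decomposition, is not needed.
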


Computing the value of $c_\mu$ for a general reproduction law seems a challenging question and we do not know if there  exists a simple closed expression for this constant. However, thanks to a comparison argument with the geometric case, we can still deduce some bounds on its value in the case of regular trees.
\begin{theo}\label{theoregulier} Suppose that $\mu  = \delta_k$ for some $k\ge 2$. Then
$$1< c_\mu \le \frac{k}{k-1}.$$ 
\end{theo}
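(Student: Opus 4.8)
The lower bound $c_{\delta_k}>1$ requires no new work: since $\delta_k$ is not the Dirac mass at $1$ when $k\ge 2$, Theorem~\ref{theoesperance} already yields $c_{\delta_k}\in(1,\infty)$, hence $c_{\delta_k}>1$. The entire content of the statement is therefore the upper bound, and the value $k/(k-1)$ is not accidental: in the notation of Theorem~\ref{theogeometric}, the geometric law of mean $k$ corresponds to $1/\alpha=k$, i.e. $\alpha=1/k$, for which $c_\mu=1/(1-\alpha)=k/(k-1)$. The plan is thus to compare the $\delta_k$ heap-sorting with the heap-sorting driven by a geometric reproduction law of the \emph{same mean} $k$, and to show that the deterministic law is at least as economical, so that $c_{\delta_k}\le k/(k-1)$.

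First I would recast both procedures as one particle system on $[0,1]$: the state at time $n$ is the set of \emph{alive} vertices, each carrying its label $U_i$ and its number of remaining lives. One checks that this configuration evolves autonomously — upon the arrival of $U_{n+1}$, the closest alive label to its left loses one life (and disappears when it reaches $0$), while $U_{n+1}$ is inserted with its initial stock of lives, a new heap being created exactly when no alive label lies to the left of $U_{n+1}$. In this description $\dr(n)$ merely counts the new-heap creations, and crucially the dynamics depends on the initial life-counts only through this bookkeeping, not on the detailed tree structure. The goal is then to build, on the same sequence $(U_i)$, a coupling of the deterministic configuration (every vertex with $k$ lives) and the geometric configuration (mean-$k$ lives) for which the geometric alive-set stays \emph{contained in} the deterministic one; since then every point with no deterministic-alive label to its left also has no geometric-alive label to its left, this gives $\dr_{\delta_k}(n)\le \dr_{\mathrm{geom}}(n)$ pathwise. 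Dividing by $\log n$, taking expectations and letting $n\to\infty$, Theorem~\ref{theogeometric} would then deliver $c_{\delta_k}\le k/(k-1)$.

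The guiding principle is that a \emph{less variable} reproduction law needs fewer heaps: among all laws of mean $k$, the deterministic law $\delta_k$ is minimal for the convex order, and one expects $c_\mu$ to be monotone for that order. The one ingredient that is robust is \emph{coordinatewise} monotonicity: if two processes are run on the same $(U_i)$ and each vertex is given at least as many lives in one as in the other, a monotone coupling of the particle system forces the better-endowed configuration to keep at least as many alive labels to the left of every point, hence to create no more heaps. This settles the comparison for any pair of life-vectors ordered coordinatewise.

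The difficulty — and the main obstacle — is precisely that $\delta_k$ and the geometric law of mean $k$ are \emph{not} stochastically ordered: the geometric puts mass both below and above $k$, so no coordinatewise coupling of the life-vectors exists, and the monotonicity above does not apply directly. One must instead exploit the convex order through a mean-preserving-spread argument, replacing the constant value $k$ by the random $\nu_i$ one coordinate at a time and showing that each replacement can only increase $\E[\dr(n)]$; by Jensen's inequality this reduces to proving that $\dr(n)$, viewed as a function of a single life-count with all other data frozen, is \emph{convex}, i.e. that the marginal benefit of an extra life on a fixed vertex is non-increasing. The delicate point is that adding a life re-routes later arrivals to different parents, so the two particle dynamics diverge and the naive pathwise inclusion breaks; controlling this propagation — via the autonomy of the alive-configuration together with the monotone coupling above — is the technical heart of the argument, and is where I would expect to spend the bulk of the effort.
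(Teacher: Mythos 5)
Your strategy is precisely the one the paper follows in Section~\ref{SousSec:PreuveCasRegulier}: compare $\delta_k$ with the geometric law of the \emph{same mean} $k$ (so that Theorem~\ref{theogeometric} gives the constant $k/(k-1)$), observe that the two laws are not stochastically ordered so that no pathwise domination is available, and instead replace the deterministic life count by the geometric one coordinate at a time, reducing via Jensen's inequality to the convexity of the heap count as a function of a single vertex's number of lives. This is exactly Proposition~\ref{propcouplage} and Lemma~\ref{lemmecouplage} of the paper. Your coordinatewise monotone coupling is also correct (the invariant that every label keeps at least as many remaining lives in the better-endowed configuration is preserved step by step, so alive-sets stay nested and new-heap events transfer); it corresponds to the easy part (i) of Lemma~\ref{lemmecouplage}. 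One organizational caveat: for the lower bound you should invoke Lemma~\ref{lem:cvesperance} and \eqref{limDn} (which give $c_\mu\in(1,\infty]$ for any $\mu\neq\delta_1$) rather than Theorem~\ref{theoesperance}, since in the paper the finiteness part of Theorem~\ref{theoesperance} is itself proved by the very comparison argument you are constructing, so citing it wholesale is circular.

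The genuine gap is at the point you yourself flag as the technical heart: the convexity statement --- if granting vertex $i_0$ its $m$-th life does not reduce the number of heaps, then no further life does either (Lemma~\ref{lemmecouplage}~(ii)) --- is never proved, and the tools you propose cannot prove it. The monotone coupling of alive-configurations is a \emph{first-order} comparison: it shows $\mathbf{R}_m$ is non-increasing in $m$, but it compares nothing about the increments $\mathbf{R}_m-\mathbf{R}_{m-1}$ versus $\mathbf{R}_{m+1}-\mathbf{R}_m$; once the two coupled dynamics have diverged (which, as you note, they do, since later arrivals are re-routed to different parents), no alive-set inclusion controls \emph{where} the discrepancy ends up, which is exactly what convexity requires. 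The paper's proof of (ii) rests on a separate idea: in the graphical representation, the modification caused by withdrawing one life from $u_{i_0}$ is tracked by a second-class particle that climbs solid vertical lines and moves left when a line ends, a new heap being created if and only if this particle exits the box by its left side; the key geometric fact is that the trajectory of the second-class particle of index $m-1$ stays below that of index $m$, because passing from $\mathcal{G}^{m}$ to $\mathcal{G}^{m-1}$ can only turn solid vertical lines into dotted ones, never the reverse, so the perturbation paths are nested and ``exits by the top'' propagates to all higher indices. Without this (or an equivalent second-order argument), the Jensen step has nothing to stand on, so as written the proposal establishes the reduction but not the theorem.
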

Unsurprisingly, the constant converges to $1$ when $k$ goes to infinity which means that for big trees, new heaps are created mostly when encountering new running minimum. Let us also remark that the bounds above are compatible with conjecture \eqref{gdconj} of Istrate and Bonchis \cite{IstrateBonchis} in case of binary trees. Furthermore, numerical simulations indeed indicate a value for the constant relatively close to that of the golden ratio. Yet, we do not believe it to be the correct value. We conjecture that the constant is, in fact, strictly smaller than the golden ratio.
\medskip

In \cite{Hammersley}, Hammersley introduced what is now called \emph{Hammersley's line process} as a way of embedding Ulam's problem in continuous time via a Poissonization procedure. This approach was developed by Aldous and Diaconis \cite{AldousDiaconis95}, providing a probabilistic proof of $\ell(\sigma)\sim 2\sqrt{n}$.
Hammersley's line process turned out to be interesting in its own right. Indeed it is a prime example of an exactly solvable conservative interacting particle system and subsequent works (\emph{e.g.} \cite{Sepp,Sepp2,Groeneboom,CatorGroeneboom,BEGG}) have highlighted the remarkable properties it exhibits.

Here, we follow a similar approach by considering the analogue of Hammersley's process for heap sorting. This gives an alternative representation of the initial problem in term of an \emph{Hammersley's trees process}. The study of this new process is the key to proving Theorem \ref{theoesperance}-\ref{theoregulier}. As already mentioned, the geometric distribution which, in a way, is the most natural extension of Hammersley's line process plays here a special role and deserves a separate study since computation can be carried out to their full extend in this case. Just like Hammersley's line process, these tree processes have some remarkable probabilistic properties that we think are worth exploring, even without reference to the combinatorial problem of counting heaps in a random permutation.

The paper is organized as follows:
\begin{itemize}
\item In Section \ref{Sec:vision}, we formally define this Hammersley's tree process on boxes of $\R^2$. We study the dynamics when the rectangles grow in different directions. This provides several ways of looking at the quantity $\dr(n)$ which are complementary.
\item Section \ref{Sec:GeometricCase} 
is devoted to the geometric case. 
\begin{itemize}
\item[(a)] First, following Cator and Groeneboom \cite{CatorGroeneboom}, we introduce a modification of the process where \emph{sources} and \emph{sinks} are added. This allows us to compute the stationary measures of the process in both directions (Theorems \ref{Th:SourcesStationnaires} and \ref{Th:RacinesStationnaires}).
\item[(b)] The next step consists in defining the stationary process on the whole upper-half plane. We show that the local limit exists as we take boxes that grow to fill the whole space (Theorem \ref{Theo:HalfPlane}). 
\item[(c)] We consider the environment seen from a particle and show that it follows a Poisson Point Process at all time (Proposition \ref{Prop:StationaritePalmGeometrique}). This is the last ingredient to prove the logarithmic growth of $\dr(n)$ in the geometric case.
\end{itemize}
\item In Section \ref{Sec:CasRegulier}, we go back to the case of a general offspring distribution $\nu$. Here, we use a partial coupling argument to compare with the geometric case and complete the proof of Theorems \ref{theoesperance}-\ref{theoregulier}. Once this is done, we can, somewhat remarkably, bootstrap the arguments used for the geometric case to show the existence of the infinite stationary random tree for an arbitrary progeny distribution, even though we cannot compute the stationary distribution explicitly (Theorem \ref{Theo:HalfPlaneGeneral}). 
\end{itemize}

\section{The $\mu$-Hammersley Process and its graphical representations}\label{Sec:vision}
In this section, we describe an alternative way of looking at the problem. 
Easy considerations will then show that $\E[\mathbf{R}(n)]/\log n$ converges to some constant $c_\mu\in(1,\infty]$ as $n$ goes to infinity while still leaving out the really challenging part that is to show $c_\mu<\infty$.

\subsection{Definitions}\label{section-poisson}

We follow the idea of Hammersley \cite{Hammersley}. Consider a Poisson Point Process (PPP)
$$\Xi=(U_i,T_i,\nu_i)_{i\geq 1}$$ 
on $(0,1)\times (0,+\infty) \times \N$ with intensity $du\otimes  dt\otimes \mu$ where $du$, $dt$ are Lebesgue measures and $\mu$ is the progeny distribution of the trees. An atom $(U,T,\nu)$ of $\Xi$ corresponds to a label $U$ with $\nu$ lives arriving at time $T$. We index the atoms of this measure by their time of arrival, \emph{i.e.} such that 
$$0<T_1 <T_2 < \ldots < T_n < \ldots$$

We use the heap sorting algorithm defined in the introduction to sort the sequence $(U_i,\nu_i)$ and we denote by $R(t)$ the number of trees required up to time $t$ (\emph{i.e.} the number of trees needed to sort all the atoms that occur before time $t$). Thus the process $(R(t),t\ge 0)$ is simply a time change of the discrete time process $(\mathbf{R}(n),n\in \N)$ defined in the previous section: 
\begin{equation}\label{eq-discretcontinu}
(R(t),t\ge 0)=(\mathbf{R}(K(t)),t\ge 0),
\end{equation}
 where 
$$
K(t) \defeq |\{i,\, T_i \leq t\}|
$$ 
is the Poisson process counting the number of atoms of $\Xi$ before time $t$. Notice that the process $K$ is independent of $(\mathbf{R}(n),n\in \N)$. Therefore, estimates on $R$ can easily be transferred on $\mathbf{R}$ and \emph{vice-versa}.
We will deal mostly with $R(t)$ and Theorem \ref{theoesperance} - \ref{theoregulier} will follow easily once we have proved the following result concerning the continuous time process $R$:
\begin{theo}\label{theocontinu} 
For every distribution $\mu$ on $\N$ which is not the Dirac mass in $1$, there exists a constant $c_\mu \in (1,\infty)$  such that
$$\lim_{n\to \infty}\frac{\E[\mathbf{R}(n)]}{\log n}=\lim_{t\to \infty}\frac{\E[R(t)]}{\log t}=c_\mu.$$
\begin{itemize}
\item[(i)]   If $\mu$ is a geometric distribution with mean $1/\alpha$, then $c_\mu=1/(1-\alpha)$ and $\frac{R(t)}{\log t}$ converges a.s. and in $L^1$ to $1/(1-\alpha)$.
\item[(ii)]  If $\mu$ is the Dirac mass in $k\ge 2$,  then
 $1< c_\mu \le \frac{k}{k-1}$. 
\end{itemize}
\end{theo}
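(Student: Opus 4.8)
The plan is to work throughout with the continuous-time process $R$. By \eqref{eq-discretcontinu} and the independence of the Poisson clock $K$, together with the concentration $K(t)\sim t$, the two normalizations $\E[\dr(n)]/\log n$ and $\E[R(t)]/\log t$ have the same limit, so it suffices to treat $R$. The first step is the \emph{easy} existence of the limit: reading the dynamics in the two complementary growth directions of Section~\ref{Sec:vision} expresses $R$ as a monotone functional of the underlying PPP and yields, by an elementary (super)additivity argument, convergence of $\E[R(t)]/\log t$ to some $c_\mu\in[1,\infty]$. The bound $c_\mu\ge1$ is the record-minimum observation (each new running minimum of the labels is below every living vertex and hence opens a new tree, and there are $\sim\log t$ of them). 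The genuine content of the theorem is then (a) the exact value in the geometric case and (b) the finiteness $c_\mu<\infty$ in general, both of which I would extract from the geometric model by comparison.

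For part (i) I would exploit the memorylessness of the geometric law: with $\mu$ geometric of mean $1/\alpha$, a living vertex receiving a child dies with probability $\alpha$ and otherwise survives with an again-geometric number of remaining lives, independently of the past. This turns the profile of \emph{living} vertices into a genuine Hammersley-type particle system on $(0,1)$: an arrival at label $U$ adds a particle at $U$ and, if $U$ lies above the front (i.e.\ hits an existing living particle), kills the nearest living particle below $U$ with probability $\alpha$. Following Cator--Groeneboom \cite{CatorGroeneboom}, I would add Poissonian \emph{sources} and \emph{sinks}, tune their intensities so the box dynamics are stationary in both directions (Theorems~\ref{Th:SourcesStationnaires}--\ref{Th:RacinesStationnaires}), pass to the limit over growing boxes (Theorem~\ref{Theo:HalfPlane}), and prove the Burke-type property that at each fixed time the living particles form a Poisson point process of explicit intensity (Proposition~\ref{Prop:StationaritePalmGeometrique}). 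The computation then closes by a balance: every arrival creates a living particle and destroys one only when it both hits the profile and kills (probability $\to\alpha$), so the intensity of living particles grows like $(1-\alpha)t$; hence the lowest living label has expectation $\sim 1/((1-\alpha)t)$, which is exactly the instantaneous rate of tree creation. Integrating gives $\E[R(t)]\sim\frac{1}{1-\alpha}\log t$, and the stationarity and ergodicity of the environment seen from the front upgrade this to the a.s.\ and $L^1$ convergence of $R(t)/\log t$ to $1/(1-\alpha)$.

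The bounds in (ii), and the finiteness of $c_\mu$ for general non-degenerate $\mu$, I would obtain by a \emph{partial coupling} with the geometric case, run on the same arrival process $(U_i,T_i)$. The guiding monotonicity is that, at a fixed mean, concentrating the offspring law can only reduce the number of trees: a vertex guaranteed many lives reliably absorbs its descendants, whereas a geometric law squanders capacity on the vertices drawn with very few lives, pushing their would-be descendants into fresh trees. For $\mu=\delta_k$ I would couple with a geometric law of the \emph{same} mean $k$ (so $\alpha=1/k$ and $1/(1-\alpha)=k/(k-1)$) and show the deterministic process never uses more trees, giving $c_{\delta_k}\le k/(k-1)$; the limit $k/(k-1)\to1$ makes transparent that for large trees new heaps are created essentially only at record minima. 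The strict inequality $c_\mu>1$ follows because the lowest \emph{living} label sits strictly above the global record: since $\mu$ is supported on $\N$, the root of the first tree is hit repeatedly and dies after finitely many children, after which tree creation proceeds at a rate strictly exceeding the record rate $1/t$; one quantifies the gap by comparing from below with a geometric law dominated by $\mu$. Finiteness in general is handled the same way, sandwiching the front between geometric profiles whose growth rate is positive.

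The main obstacle is twofold. First, the exact solvability of the geometric case rests entirely on the Burke-type invariance of Proposition~\ref{Prop:StationaritePalmGeometrique}: showing that the living particles remain a Poisson point process under the full source-and-sink dynamics is delicate, requiring the correct boundary intensities and a reversibility argument in the spirit of \cite{CatorGroeneboom}. Second, the comparison must be realized as an \emph{honest} coupling of the two full tree-valued dynamics, not merely of their offspring marginals: because bounded-support laws are not stochastically comparable to geometrics, one cannot dominate $\mu$ by a single geometric law globally, so the monotonicity ``more concentrated offspring $\Rightarrow$ fewer trees'' has to be established at the level of the evolving profiles --- this is precisely where \emph{partial} in ``partial coupling'' does the work. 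Carrying out this monotonicity rigorously, and checking that the coupling survives the passage to the stationary infinite tree (Theorem~\ref{Theo:HalfPlaneGeneral}), is where the real effort lies.
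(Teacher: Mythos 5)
Your plan follows the paper's architecture (existence of the limit from the two growth directions, exact solvability of the geometric case with sources and sinks, fixed-mean comparison for $\delta_k$ and general $\mu$), but it has a genuine gap exactly at the technical heart of part (i). The theorem concerns $R(t)$, the root count of the process with \emph{no} sources and \emph{no} sinks, started empty; this process is not stationary in any direction, and nothing guarantees that its one-time marginals are Poisson --- Theorem \ref{Th:SourcesStationnaires} requires the sink process, and in the degenerate case $\mu=\delta_1$ the sink-free configuration is the Hammersley-line ensemble of Ulam's problem, which is far from Poisson. Consequently your closing ``balance'' argument (creation rate $\approx 1/((1-\alpha)t)$, then integrate) is a heuristic about the \emph{stationary} process, not a proof about $R(t)$. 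The paper's proof of (i) consists precisely in transferring the stationary computation to $R(t)$: the easy half is the upper bound $R(t)\le R^{\mathcal{C}_1,\mathcal{S}_1}(t)+\sum_{(u,\nu)\in\mathcal{C}_1}(\nu-1)$ of \eqref{eq:theolowerbound} (sinks only create trees, a source with $\nu$ lives merges at most $\nu$ trees); the matching lower bound \eqref{eq:theoupperbound} is the hard half and needs a construction absent from your plan: from $\Xi$ and an \emph{independent} copy $\Xi'$ one manufactures a configuration $\hat{\Xi}$ which is exactly stationary with sources and sinks and satisfies $R(t)\ge\hat{R}^{\mathcal{C}_1,\mathcal{S}_1}(t)-|\hat{\mathcal{T}}|$, using the monotone ``falling'' map of Proposition \ref{Prop:couplagesousdomaine} and the finiteness $\E[|\hat{\mathcal{T}}|]<\infty$ of the number of boundary trees (Corollary \ref{arbrepuit}, which rests on the drift estimate of Proposition \ref{Prop:LimiteXt}). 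Your proposed substitute --- ``stationarity and ergodicity of the environment seen from the front'' --- does not do this job: Proposition \ref{Prop:StationaritePalmGeometrique} describes the gaps to the right of one tagged tree and serves in the paper only to control the error term $|\hat{\mathcal{T}}|$, not to count roots of the boundary-free process.

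Two further defects sit in your comparison step. First, quantifying $c_\mu>1$ ``by comparing from below with a geometric law dominated by $\mu$'' cannot be carried out: no geometric law is stochastically comparable with a bounded-support law (you note this obstruction yourself two sentences later), and the inequality points the wrong way in any case --- under the natural coupling a dominated offspring law produces \emph{more} trees, so it bounds $\mathbf{R}_\mu$ from \emph{above}, not below. In the paper strictness is free: the second-class-particle argument makes $n\mapsto D'_n$ pathwise non-decreasing, whence $\E[D_n+1]\uparrow c_\mu$ in \eqref{limDn}, and $\E[D_n]>0$ eventually because every particle has finitely many lives; no comparison is needed. Second, for a general $\mu$ your ``sandwiching between geometric profiles'' hits the same non-comparability obstruction, and Proposition \ref{propcouplage} cannot be applied to $\mu$ directly either, since its proof (Lemma \ref{lemmecouplage}) requires the more concentrated law to be supported on two \emph{consecutive} integers. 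The missing idea is the paper's two-stage reduction: first dominate $\mu$ pathwise by the two-point law $\mu'$ on $\{1,2\}$ with $\mu'(1)=\mu(1)$ (an honest stochastic domination, giving $\mathbf{R}_\mu(n)\le\mathbf{R}_{\mu'}(n)$ under a trivial coupling), and only then compare $\mu'$ \emph{in expectation} with the geometric law of the same mean via Proposition \ref{propcouplage} and Theorem \ref{theogeometric}. Without this reduction the finiteness of $c_\mu$ --- i.e.\ the existence statement of the theorem for general $\mu$ --- remains unproved. (A minor remark on your first step: the paper does not obtain the existence of the limit by superadditivity, which is unclear for logarithmic growth, but from the exact recursion \eqref{recDn} combined with the monotonicity of $D'_n$; your appeal to the two growth directions is, however, the right mechanism.)
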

In order to study the process $R(t)$, we need to keep track of all the labels of the vertices of the trees together with  their remaining lives at any given time $t$. To do so, we introduce a particle system $\bar{H}$ with the following properties. At any time $t$, the set of particles on the system is the set of atoms in $\Xi$ that occurred before $t$ and the positions of the particles are given by the labels $U$ of the atoms. Moreover, each particle is also assigned a number of lives that  corresponds to the number of lives remaining in the heap sorting procedure. Formally, we can represent $\bar{H}$ as a measure-valued process on $(0,1)\times \{0,1,\ldots\}$ 
$$\bar{H}(t)\defeq\sum_j \delta_{(u_j(t),\bar{\nu}_j(t))}$$
where the $u_j(t)$ are the labels of the vertices of the trees that appeared before time $t$ and $\bar{\nu}_j(t)\ge 0$ are the corresponding number of lives at time $t$. Here we use the notation $\bar{\nu}$ instead of $\nu$ to distinguish between its current number of lives and the initial number of lives $\nu$ it had when it appeared. Let us stress that $\bar{H}$ keeps track of the particles with $0$ life. We say that such particles are \emph{dead} whereas particles with at least one life remaining are called \emph{alive}. Dead particles cannot interact in any way (the corresponding vertex is full) hence they can be discarded without affecting the evolution of the other particles. We will denote by $H$ the process where we keep only the particles alive:
$$
H(t)\defeq\sum_{j,\, \bar{\nu}_j(t) > 0} \delta_{(u_j(t),\bar{\nu}_j(t))}.
$$
We call this process (and by extension also $\bar{H}$) the \emph{$\mu$-Hammersley} process or \emph{Hammersley's Tree process}. By convention, for each time $t$, we index the particles in $H(t)$ (resp. $\bar{H}(t)$) by increasing order of their positions \emph{i.e.} such that $u_j(t)<u_{j+1}(t)$. We point out that this is not the order of their arrival time so the index of a particle located a given position may change in time. Looking back at the sorting algorithm, the following proposition is straightforward.
\begin{prop}\label{Prop:transitionH}
The process $\bar{H}$ is a Markov process with initial state and transition kernel given by:
\begin{itemize}
\item There is no particle at time $0$ \emph{i.e.} $\bar{H}(0) = 0$
\item Given $\bar{H}(t^-)$, an atom $(u,t,\nu)$ in $\Xi$ creates in $\bar{H}(t)$ a new particle at position $u$ with $\nu$ lives. Furthermore, the particle alive in $H(t^-)$ with the largest label smaller than $u$ loses one life (if such a particle exists).
\end{itemize}
The same statement holds with $H$ in place of $\bar{H}$.
\end{prop}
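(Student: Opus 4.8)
The plan is to read off both the Markov property and the explicit transition kernel directly from the definition of the heap sorting algorithm, the only point requiring genuine care being that the evolution of the pair (positions, remaining lives) is \emph{autonomous}: it can be run without remembering the full tree structure of the forest. I would begin with the trivial initial condition. Since $\Xi$ almost surely has no atom with arrival time $0$ (indeed $T_1>0$), there is no vertex present at time $0$, so $\bar{H}(0)=0$.

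Next, for the transition, I would fix $t>0$ and suppose $(u,t,\nu)$ is an atom of $\Xi$. By the placement rule of the algorithm, the new label $u$ is attached as a child of the alive vertex carrying the largest label smaller than $u$, which then loses one life; if no alive vertex has a label below $u$, then $u$ becomes the root of a new tree. Translating into the measure-valued description: a new particle is created at position $u$ with $\nu$ lives, and the alive particle of $H(t^-)$ with largest label smaller than $u$ has its life count decreased by one, when such a particle exists. The essential observation is that this rule consults only the positions and the aliveness of the existing particles; it never invokes the parent--child relations of the forest. Hence the update $\bar{H}(t^-)\mapsto\bar{H}(t)$ is a deterministic function of $\bar{H}(t^-)$, through the derived configuration $H(t^-)$, together with the pair $(u,\nu)$ alone, so the tree structure may be safely discarded.

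The Markov property then follows from the independence properties of the Poisson point process. The restriction of $\Xi$ to $(0,1)\times(t,\infty)\times\N$ is independent of its restriction to $(0,1)\times(0,t]\times\N$, and $\bar{H}(s)$ for $s\le t$ is a measurable function of the latter. Since the atoms of $\Xi$ have almost surely distinct arrival times, transitions occur one at a time, and iterating the single-atom update above shows that the conditional law of $(\bar{H}(s),\,s\ge t)$ given the past depends only on $\bar{H}(t)$. The same statement holds verbatim for $H$: a dead particle (with zero remaining lives) is never selected by the placement rule and can never be revived, so it plays no role in any future update. Consequently the restriction of the dynamics to alive particles is itself closed, and the transition kernel for $H$ is obtained from that of $\bar{H}$ by simply deleting any particle whose life count drops to $0$.

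I expect the only mildly delicate point to be the autonomy claim, that is, verifying that the position-and-lives data suffice to run the algorithm without reference to ancestry. This becomes immediate once one notes that the placement rule depends on a vertex only through its label and its status as alive or dead, and that these two quantities are exactly what $\bar{H}$ records; everything else in the statement is a direct transcription of the algorithm together with the elementary independence structure of $\Xi$.
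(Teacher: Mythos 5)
Your proof is correct and follows exactly the route the paper has in mind: the paper states this proposition without proof, calling it ``straightforward'' from the sorting algorithm, and your argument fills in precisely those details --- the autonomy of the (position, lives) data, the independence of Poisson increments, and the fact that dead particles never re-enter the dynamics. Nothing is missing, and no genuinely different method is involved.
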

When there is no particle in $H(t^-)$ on the left of an atom $(u,t,\nu)$, we say that a \emph{root} is created at time $t$ since it corresponds to creating a new tree in the heap sorting algorithm. With this terminology, the number of heaps $R(t)$ is exactly the number of roots created by $H$ up to time $t$.

\begin{figure}
\begin{center}
\includegraphics[height=5.5cm]{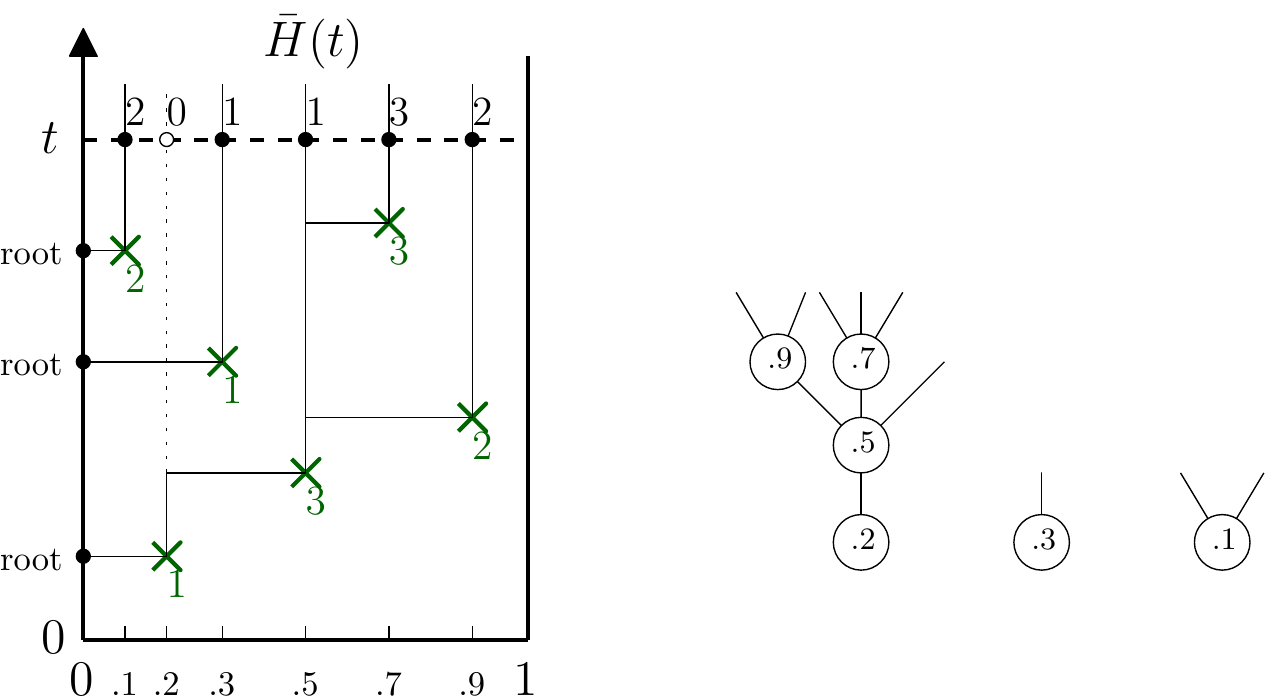}
\caption{ \label{fig:defHammersley} On the left, the graphical representation of $\bar{H}$. Crosses in green represent the atoms of $\Xi$ with their initial number of lives written below. Dead particles are signified with dotted lines. In this example, the process $\bar{H}$ at time $t$ has three roots ($R(t) = 3$) and consists of $6$ particles at positions $.1, .2, .3, .5, .7, .9$ with respective remaining lives a time $t$ equal to $2,0,1,1,3,2$. On the right, the corresponding trees obtained by the heap patience sorting algorithm.}
\end{center}
\end{figure}

Let us note that in the particular case where $\mu=\delta_1$, every particle has exactly one life when it is born and $H$ is the classical Hammersley's line process where particles appear according to a PPP of unit intensity and each new particle kills the particle immediately on its left. In our setting, the difference is that particles may be born with more than one life so they do not necessarily die when a new particle appears on their right.

Just as in the case of the usual Hammersley process, there is a very useful graphical representation of the genealogy of the particles in $H$ in term of broken lines. Consider the strip $[0,1]\times[0,\infty)$. We draw a cross at each position $(u,t)$ corresponding to an atom of $\Xi$. These crosses mark the position and time at which a new particle appears. We represent the genealogy of the particles using vertical and horizontal lines:
\begin{itemize}
\item Vertical lines mark the positions of particles through time. 
\item Horizontal line connects each newly arrived particle to its father or to the vertical axis in case of a new root.
\end{itemize}
In the case of Hammersley's line process, this representation yields a set of non intersecting broken lines going up and right. In our setting, we get a set of non-intersecting trees, each tree starting from the left boundary of the box and going right and upward. When dealing with $\bar{H}$, it is convenient to represent also the position of the dead particles. We do so by drawing position of dead particles through time using dotted lines. See Figure \ref{fig:defHammersley} for an example.

\subsection{Markov property for the number of lives process}\label{sect-augmentationhaut}

In \cite{IstrateBonchis}, Istrate and Bonchis made the observation that, if we forget the exact position of the particles in $\bar{H}$, keeping track only their relative order together with their number of lives, then the resulting process is still Markov. More precisely, recall that $T_0<T_1<\ldots$ denote the times of arrival of a new particle and define 
$$\mathcal{V}_n=(\bar{\nu}_j(T_n))_{1\le j\le n}$$
\emph{i.e.} $\mathcal{V}_n$ is the vector containing the number of lives of the $n$ first particles at the time when the $n$-th particle appears, ordered by increasing value of their labels. Then, $(\mathcal{V}_n, n\geq 0)$ is a Markov process on finite words with alphabet $\N\cup\{0\}$ and with the following properties.
\begin{itemize}
\item $\mathcal{V}_0 = \emptyset$ and $\mathcal{V}_n \in (\N\cup\{0\})^n$.
\item Conditionally on $\mathcal{V}_n = (\bar{\nu}_1,\ldots,\bar{\nu}_n)$, the law of $\mathcal{V}_{n+1}$ is given by the following construction: pick an index $j\in \{0,\ldots, n\}$ with uniform distribution
and insert between $\bar{\nu}_j$ and $\bar{\nu}_{j+1}$ a new random variable sampled with law $\mu$ (cases $j=0$ and $j=n$ correspond to inserting at both extremities of the word). Then, decrement the number of lives of the particle still alive with the largest index smaller than $j$ (if such an index exists). 
\end{itemize}

\begin{rem} In the construction $\mathcal{V}$, it is essential to keep track of the dead particles otherwise the Markov property is lost. 
\end{rem}

Consider the quantity
\begin{equation}\label{defD_n}
D_n \defeq\sup\{j, \bar{\nu}_j(T_n)=0\}\qquad\hbox{}
\end{equation}
with the convention $\sup\emptyset = 0$. Hence, $D_n$ is the number of $0$ at the beginning of $\mathcal{V}_n$. According to the dynamic of $\mathcal{V}$, the process $\bar{H}$ creates a new root each and every time the uniform index  $j$ chosen for the insertion of the new element in the word is such that $0 \leq j \leq D_n$. Indeed, this means that the new vertex cannot attach itself to a previously existing one, hence starts a new tree. Therefore, we obtain the simple recursion formula.
\begin{equation} \label{recDn}
 \E[\mathbf{R}(n+1)]= \E[\mathbf{R}(n)]+\frac{\E[D_n+1]}{n+1}.
 \end{equation}
 
\subsection{Growing the rectangle in different directions}

In our original definition of the $\mu$-Hammersley process, the PPP $\Xi$ is defined on the set $(0,1) \times (0,\infty)\times \N$ therefore the graphical representation is defined inside the strip $[0,1]\times [0,\infty)$. More generally, it is convenient to allow labels to be indexed by $\R$ instead of $(0,1)$. This enables us to define the $\mu$-Hammersley process on larger boxes. Thus, we extend the PPP $\Xi$ to $\R \times (0,\infty) \times \N$, still with intensity $du \otimes dt \otimes \mu$. For any $a<b$, we define the  $\mu$-Hammersley  $(H_{[a,b]}(t), t\ge 0)$ (resp.  $(\bar{H}_{[a,b]}(t), t\ge 0)$) obtained by considering only the atoms of $\Xi$ whose labels lie in $(a,b)$. Thus, the genealogy of the process is given by the aforementioned graphical representation inside the strip $[a,b]\times[0,\infty)$. We denote by $R_{[a,b]}(t)$ the associated number of roots up to time $t$.

It is clear that the mapping
$$\begin{array}{ccc}
(a,b)\times (0,T)\times \N & \to& (0,1) \times (0, (b-a)T)\times \N\\
(u,t,\nu) & \mapsto & (\frac{u-a}{b-a}, (b-a)t,\nu)
\end{array}$$
preserves $\Xi$. Hence, the same is also true for the $\mu$-Hammersley process. This means that we can map the graphical representation inside boxes of similar volume by dilation of space and inverse dilatation of time. In particular, the number of roots depends only on the volume which implies the equality 
$$R_{[a,b]}(t)\overset{\hbox{\tiny{law}}}{=}R((b-a)t).$$

As we already remarked, the entire history of the particle  system up to time $t$ is encoded by the graphical representation inside the box $[a,b]\times[0,t]$. Thus looking at the process evolving in time corresponds to growing this box from the top. Since  the heap sorting algorithm is online,  the graphical representation is compatible with this operation: for $t < t'$, the graphical representation inside $[a,b]\times[0,t]$ is the trace of the graphical representation inside the larger box $[a,b]\times[0,t']$. In the following subsection, we will instead consider the process obtained as the box grows from its right (resp. left) boundary. This corresponds to inserting atoms with larger (resp. smaller) values and looking at  how they affect the sorting procedure.

\subsubsection{Growing to the right. Second class particle}\label{sect-augmentationdroite}

We first look at the dynamic of the $\mu$-Hammersley  $\bar{H}_{[0,x]}(t)$ when $t$ is fixed and  $x$ increases, \emph{i.e.} when the atoms of $\Xi$ are taken into account from left to right. Let $((U_i,T_i,\nu_i),i\ge 1)$ denote the atoms of $\Xi$ inside the strip $(0,\infty)\times \N \times (0,t)$ indexed by increasing order of their first coordinate:
  $$U_1<U_2<\ldots$$ 
The $U_i$'s should now be seen as the ``times'' of jumps of the process  $\bar{H}_{[0,\cdot]}(t)$. Let us note that this process is not Markov since  the arrival of a new largest label can alter the existing trees in a not trivial manner. This means that the graphical representation inside boxes of the form $[0,x]\times[0,t]$ and $[0,x']\times[0,t]$ are not compatible any more.  However, the modification required to insert a new label can be understood with the help a second class particle defined as follow:

\begin{itemize}
\item the second class particle starts from the position of the new label.
\item It moves horizontally to the left until either reaching the left side of the box or reaching a solid vertical line of the $\mu$-Hammersley process.
\item  Upon reaching an Hammersley line, the particle starts moving upwards until it either reaches the top of the box or until the line becomes dotted. In the latter case, it starts again moving to the left and the procedure above is repeated.
\end{itemize}

\begin{figure}
\begin{center}
\includegraphics[height=5.5cm]{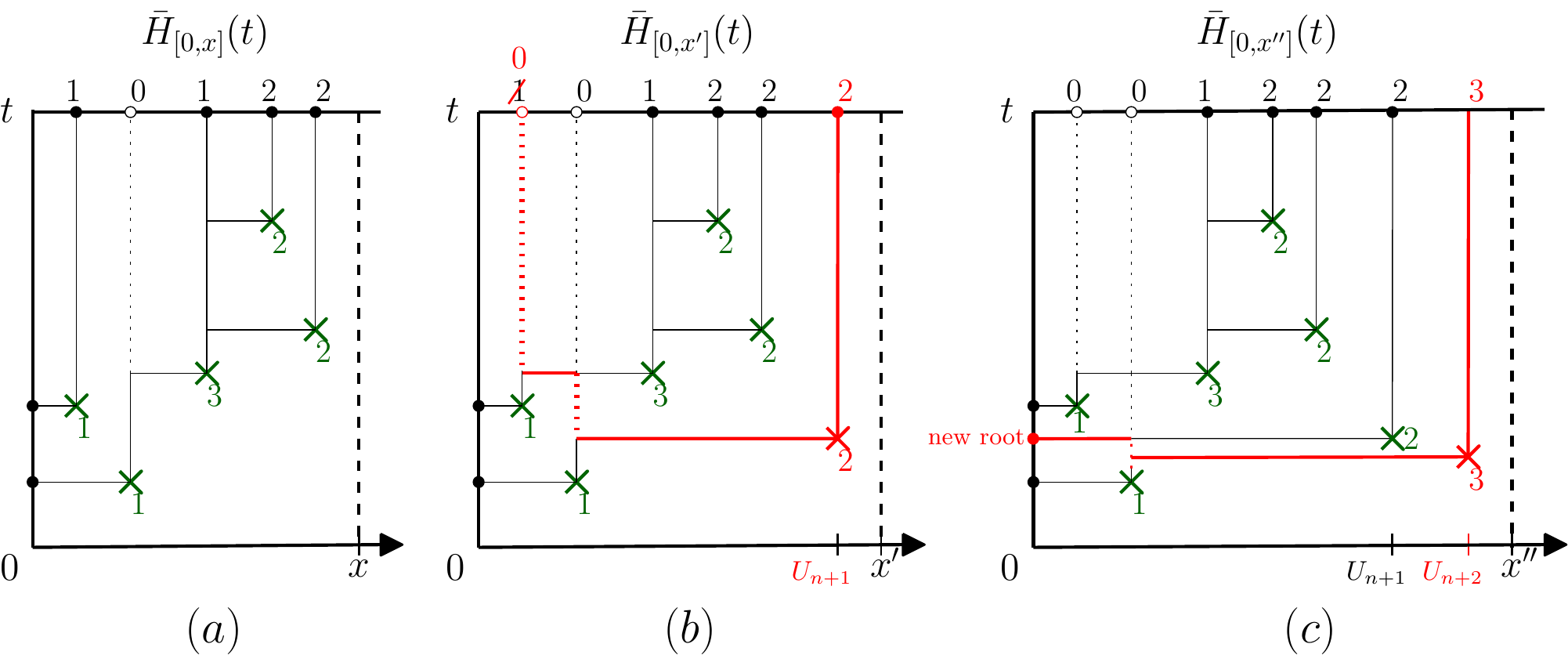}
\caption{
Effect of adding new particles with maximal label. The trajectory of the second class particles are displayed in red. $(a)$ Initial configuration. $(b)$ The particle with label $U_{n+1}$ settles inside the existing trees. $(c)$ Extending again to the right, the particle $U_{n+2}$  creates a new tree. }
 \label{fig:Dncroissant}
\end{center}
\end{figure}

Figure \ref{fig:Dncroissant} illustrates this procedure. It is easy to convince oneself that the second class particle encodes the modification required to insert the new label: when it first intersects a vertical line, the new label attached itself as a child of the corresponding particle. If this particle still has a life at time $t$ in the original process, no other modification on the tree are required (this means that the second class particle goes up to the top of the box). In the other case, the atom of $\Xi$ that previously attached last to this particle now needs to find a new father. Hence, the second class particle restarts from this atom and goes left looking for an ancestor.

This description has the following important consequence. If we look at the number of lives of all the particles at time $t$, then inserting a new atom with largest label, can alter  at most one living particle by removing one of its lives. Furthermore, a new tree is created i.f.f. the second class particle reaches the left side of the box in which case the respective number of lives of all previously existing particles is undisturbed. 

Recall that we defined $D_n$ in the previous section as the number of dead particles before finding a particle alive  at the time when the system has exactly $n$ particles when we look at the particles from left to right.  This was done with the box growing from top. We can do the same thing but now growing the box from its right side. We call $(D'_n, n\ge 0)$ this  process. The scaling invariance of the PPP implies that for each fixed $n$, the marginals $D_n$ and $D'_n$ have the same law. Furthermore, the previous remark concerning the second class particle shows that, contrarily to $D_n$, the process $D'_n$ is non-decreasing in $n$. This insures that the limit 
\begin{equation}\label{limDn}
\lim_{n\to \infty}\E[D_n+1] = \lim_{n\to \infty}\E[D'_n+1]=c_\mu
\end{equation}
exists for some $c_\mu \in (1,\infty]$. We can now make a first step towards  Theorem \ref{theoesperance}.

\begin{lem}\label{lem:cvesperance} For any distribution $\mu$ on $\N$, there exists a constant $c_\mu$ (possibly infinite) such that
$$\lim_{n\to \infty} \frac{\E[\mathbf{R}(n)]}{\log n}=\lim_{t\to \infty} \frac{\E[R(t)]}{\log t}=c_\mu.$$
\end{lem}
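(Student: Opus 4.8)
The plan is to extract both limits directly from the convergence $\E[D_n+1]\to c_\mu$ already secured in \eqref{limDn} together with the recursion \eqref{recDn}, so that essentially no new probabilistic input is needed. First I would unfold \eqref{recDn} by telescoping from $\mathbf{R}(1)=1$, obtaining
$$\E[\mathbf{R}(n)]=1+\sum_{k=1}^{n-1}\frac{\E[D_k+1]}{k+1}.$$
Here the numerators converge to $c_\mu$ by \eqref{limDn}, while the weights $1/(k+1)$ have partial sums equivalent to $\log n$. A standard Toeplitz/Ces\`aro averaging argument then gives $\E[\mathbf{R}(n)]/\log n\to c_\mu$. The argument is uniform in the finite and infinite cases: if $c_\mu=\infty$, then for every $M$ one has $\E[D_k+1]\ge M$ for $k$ large, so the sum exceeds $M\log n$ up to a bounded error, forcing the ratio to diverge as well.

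Second, to obtain the continuous-time limit I would exploit the Poissonization identity \eqref{eq-discretcontinu}, namely $R(t)=\mathbf{R}(K(t))$ with $K$ a rate-one Poisson process independent of $(\mathbf{R}(n))$, so that $\E[R(t)]=\sum_n\P(K(t)=n)\,\E[\mathbf{R}(n)]$. Since $\mathbf{R}(n)$ is non-decreasing in $n$, so is $\E[\mathbf{R}(n)]$, and restricting the sum to $\{K(t)\ge t/2\}$ yields the lower bound
$$\E[R(t)]\ge \E\big[\mathbf{R}(\lceil t/2\rceil)\big]\,\P\big(K(t)\ge t/2\big).$$
For the upper bound I would split at $n=\lfloor 2t\rfloor$: the head is at most $\E[\mathbf{R}(\lfloor 2t\rfloor)]$ by monotonicity, and the tail $\{K(t)\ge 2t\}$ is controlled by the deterministic inequality $\mathbf{R}(n)\le n$, so that it contributes at most $\E\big[K(t)\,\un_{\{K(t)\ge 2t\}}\big]$. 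Because $\log(t/2)/\log t\to1$ and $\log(2t)/\log t\to1$ while $\P(K(t)\ge t/2)\to1$, both principal terms are equivalent to $c_\mu\log t$, and the Poisson tail term decays exponentially (for instance by Cauchy--Schwarz combined with a Chernoff bound on $\P(K(t)\ge 2t)$), hence is negligible against $\log t$. This gives $\E[R(t)]/\log t\to c_\mu$, matching the discrete limit.

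The two asymptotic comparisons are routine; the only step requiring genuine care is the transfer to continuous time, where one must verify both that the atypical event $\{K(t)$ far from $t\}$ contributes negligibly and that the head--tail splitting remains valid in the degenerate case $c_\mu=\infty$ (there one argues directly that the lower bound already diverges). Since the substantive content of the statement, the mere \emph{existence} of $\lim_n\E[D_n+1]$, is already provided by the monotonicity of $D'_n$ in \eqref{limDn}, I do not expect any essential obstacle to remain, and the main subtlety is simply bookkeeping in the Poissonization estimate.
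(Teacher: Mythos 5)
Your proposal is correct and follows essentially the same route as the paper: the discrete limit comes from telescoping \eqref{recDn} against the convergence \eqref{limDn} (a Ces\`aro argument the paper leaves implicit), and the continuous-time limit from the Poissonization \eqref{eq-discretcontinu} with a truncation on the event $\{K(t)\ge t/2\}$ for the lower bound. The only difference is cosmetic: for the upper bound the paper uses the uniform estimate $\E[\mathbf{R}(n)]\le c_\mu(1+\log n)$ (from monotonicity of $\E[D_n+1]$) together with Jensen's inequality $\E[\log K(t)]\le\log\E[K(t)]$, whereas you split at $n=\lfloor 2t\rfloor$ and control the tail via $\mathbf{R}(n)\le n$ and a Poisson tail bound, which is an equally routine device.
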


\begin{proof}Combining  Equations \eqref{recDn} and \eqref{limDn}, we get that 
 $$\lim_{n\to \infty} \frac{\E[\mathbf{R}(n)]}{\log n}=c_\mu.$$
Moreover, $\E[D_n]$ being non-decreasing,  $\E[\mathbf{R}(n)]\le c_\mu (1+\log n)$.
Recall also that $R(t)=\mathbf{R}(K(t))$ where $K(t)$ is an independent Poisson random variable of parameter $t$. Thus,
\begin{equation*}
 \E[R(t)]=\E\big[\E[\mathbf{R}(K(t))|K(t)]\big] \le c_\mu \E[1+\log K(t)] \le c_\mu(1+\log \E[K(t)]) =c_\mu(1+ \log t).
 \end{equation*}
On the other side, for $c<c_\mu$, let $N$ such that  for $n\ge  N$,  $\E[\mathbf{R}(n)]\ge c\log n$. Then, for $t\ge 2N$, we have
\begin{multline*}
\E[R(t)] \ge \E\big[\E[\mathbf{R}(K(t))|K(t)]\mathbf{1}_{K(t)\ge  t/2}\big] \ge c_\mu\E\big[\log(K(t))\mathbf{1}_{K(t)\ge  t/2}\big] \\ \ge c_\mu\log (t/2)\P\{K(t)\ge  t/2\}.
\end{multline*}
\end{proof}

\subsubsection{Growing to the left. The root process} \label{section-rootprocess}
We now consider the dynamic of the $\mu$-Hammersley process $H_{[-x,0]}(t)$ (or equivalently $\bar{H}_{[-x,0]}(t)$) as $x$ increases and $t$ is kept fixed, \emph{i.e.} when the atoms of $\Xi$ are taken into account from right to left. This corresponds to studying the evolution of the graphical representation inside the box $[-x,0]\times[0,t]$ as it grows by its left boundary. Let $((-U_i,T_i,\nu_i),i\ge 1)$ denote the atoms of $\Xi$ inside the strip $(-\infty,0) \times (0,t)\times \N$ indexed by decreasing order of their first coordinate:
  $$ \ldots < -U_2 < -U_1$$ 
Again, the $U_i$'s should be seen as the ``times'' of jumps of the process  $H_{[\cdot,0]}(t)$. The dynamic here is much simpler than when growing to the right. Indeed, the addition of a new label $U$ with minimal value may result in a merging of trees but does not disturb the respective sorting of atoms with larger labels. Alternatively, this means that the graphical representation possesses the compatibility property: for $x'>x$, the trace of the representation inside a box $[-x',0]\times[0,t]$ restricted to the box $[-x,0]\times[0,t]$ is the same as that obtained by considering the graphical representation inside the smaller box. Thus, in order to understand the evolution of the process, we must understand the evolution of the trace of the graphical representation on the left boundary of the box \emph{i.e.} we must understand how the set of roots evolves as time goes backward. More precisely, for each $x>0$, let $r_1^x<r_2^x<\ldots$ denote the vertical coordinate of the roots of the graphical representation inside $[-x,0]\times[0,t]$ (it is a subset of $\{T_i \leq t\hbox{ s.t.} U_i <x\}$). The \emph{root process} $(\mathcal{R}(x,t),x\ge 0)$ is formally defined by
$$\mathcal{R}(x,t)\defeq\sum_{r_i^x<t}\delta_{r_i^x}.$$
Let us note that this definition is consistent for different values of $t$, \emph{i.e.} for $t<t'$, $(\mathcal{R}(x,t),x\ge 0)$ is the restriction of $(\mathcal{R}(x,t'),x\ge 0)$ to the interval $[0,t]$. Since we are looking at the root process as $x$ varies, we say, somewhat awkwardly, that there is a \emph{root} at time $x$ located at height $s$ if $s=r_i^x$ for some $i$. The following proposition that describes the dynamic of $\mathcal{R}$ follows directly from the construction of the $\mu$-Hammersley process. 
\begin{figure}
\begin{center}
\includegraphics[height=5cm]{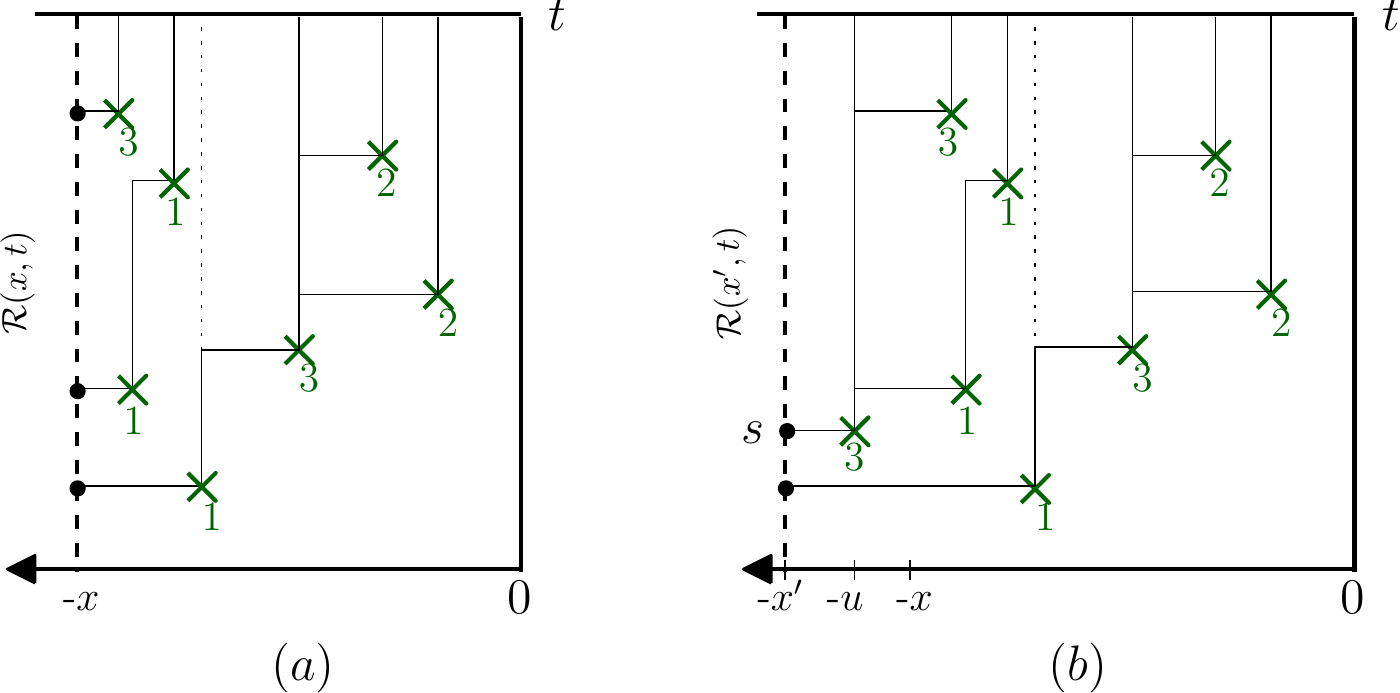}
\caption{Dynamic of the root process. $(a)$ There are three roots at time $-x$. $(b)$ The box grows to the left. The atom $(-u,s,3)$ creates a new root at time $u$ with height $s$ and tries to remove three roots above it (here, only two are removed).\label{fig:rootprocess}}
\end{center}
\end{figure}
\begin{prop}\label{Prop:transitionR}
For any fixed $t$, the root process $(\mathcal{R}(x,t),x\ge 0)$ is a Markov process with initial state and transition kernel given by
\begin{itemize}
\item There is no root at time $0$ \emph{i.e} $\mathcal{R}(0,t) = 0$.
\item Given $\mathcal{R}(u^-,t)$, an atom $(-u,s,\nu)\in \Xi$ with $s<t$ creates, at time $u$, a new root with height $s$ while removing from $\mathcal{R}$ the $\nu$ lowest particles with height greater than $s$ (or all of them if there are less than $\nu$).
\end{itemize} 
\end{prop}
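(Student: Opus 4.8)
The plan is to follow the box $[-x,0]\times[0,t]$ as $x$ crosses the abscissa $u$ of a new atom $(-u,s,\nu)$ of $\Xi$, to describe exactly how the set of roots changes, and then to read off both the kernel and the Markov property from the independence properties of the Poisson point process. Inserting this atom, I first note that since $-u$ is strictly smaller than every label already present in $(-u,0)$, the new vertex cannot attach to any existing particle and therefore opens a new tree, creating a root at height $s$. By the compatibility property noted above, all further modifications amount to the merging of previously existing trees onto this new vertex, so it remains to identify which roots are absorbed.

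The core step is a localization claim, proved by induction on the arrival times (re-running the algorithm in time order inside the enlarged box): every vertex with label in $(-u,0)$ attaches to the same parent as in $[-(u-\varepsilon),0]\times[0,t]$, with the single exception of the vertices that were roots there. Indeed, for a non-root vertex $V$ the alive vertex realising the largest label below that of $V$ is unchanged, as the new atom lies strictly to its left; whereas if $V$ was a root, no alive vertex had a label below that of $V$, so the new atom---provided it is still alive when $V$ arrives---becomes its unique admissible parent, and otherwise $V$ stays a root. What makes the induction close is that being adopted changes only the parent's number of lives, not the vertex's own, so the alive/dead status of every pre-existing particle is preserved.

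From this the kernel is immediate: the new vertex carries $\nu$ lives and, scanning the former roots in order of increasing height above $s$ (equivalently, in order of arrival after time $s$), it adopts them one at a time until its lives run out. Hence it removes exactly the $\nu$ lowest roots of height greater than $s$---or all of them if there are fewer than $\nu$---leaving untouched every root of height below $s$ as well as every higher root beyond those absorbed. The Markov property then follows since the restriction of $\Xi$ to $(-\infty,-u)\times(0,t)\times\N$ is independent of its restriction to $(-u,0)\times(0,t)\times\N$, so conditionally on $\mathcal{R}(u^-,t)$ the evolution for larger $x$ depends on the past only through the present configuration of roots; the initial condition $\mathcal{R}(0,t)=0$ is clear as the degenerate box contains no atom.

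The main obstacle is the localization claim, specifically checking that adoptions do not cascade: an absorbed root must not itself become the parent of a later vertex that was previously a root. This is precisely where both the preservation of lives and the minimality of the new label $-u$ enter, as together they ensure that the new atom is the unique alive particle lying to the left of any former root. Once this is established, the greedy count of adoptions and the Markov property are routine.
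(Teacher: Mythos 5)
Your proof is correct and takes essentially the same route as the paper, which in fact states this proposition without proof, asserting that it ``follows directly from the construction'' of the $\mu$-Hammersley process; your localization claim is precisely the compatibility property the paper states informally in the paragraph preceding the proposition (a new label of minimal value can only merge existing trees, without disturbing the sorting of the atoms with larger labels). Your induction on arrival times --- in particular the observation that adoption changes only the parent's number of lives, so the alive/dead status of every pre-existing particle is preserved --- supplies exactly the details the paper leaves implicit.
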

Figure \ref{fig:rootprocess} illustrate the dynamic of the root process. It follows from the scaling property of Poisson measures that, for any $x,t>0$ we have the correspondence
$$
|\mathcal{R}(x,t)| \overset{\hbox{\tiny{law}}}{=} R(xt).
$$
For the sake of brevity, we shall use the notation $\mathcal{R}(x)$ for $\mathcal{R}(x,1)$.
\begin{rem}
Comparing Proposition \ref{Prop:transitionH} and Proposition \ref{Prop:transitionR}, we see that, in the  case of the classical Hammersley process $\mu =\delta_1$, the two processes $\mathcal{R}$ and $H$ have same law. This is a consequence of the self-duality property of the model  
\end{rem}

The next result looks technical but will	allow us to compare the number of trees of the graphical representation inside a box with the number of trees of the graphical representation inside more general domains.

\begin{prop}\label{Prop:couplagesousdomaine} Let $f : [0,1] \to [0,+\infty]$ be a non-decreasing function. Set
$$a_f \defeq \inf(x,\; f(x) = +\infty).$$ 
Define the domain $\mathcal{D}_f  = \{ (x,t) \in (0,a_f)\times (0,+\infty),\; t > f(x)\}$  and the mapping
\begin{equation*}
F : \begin{array}{ccc}
\mathcal{D}_f \times \mathbb{N} &\to& (0,a_f)\times(0,+\infty) \times \mathbb{N}\\
(x,t,\nu) &\mapsto& (x,t-f(x),\nu).
\end{array}
\end{equation*}
Let $\xi$ be a PPP on $\mathcal{D}_f \times \mathbb{N}$ with intensity $dx\otimes dt\otimes \nu$. The image $F(\xi)$ is a PPP on $(0,a_f)\times(0,+\infty)$ with intensity $dx\otimes dt\otimes \nu$. Given a finite set of atom $A$, we denote by $R(A)$ the number of trees of the graphical representation using the atom of $A$. Define $B_t \defeq [0,1]\times[0,t]$. We have, almost surely,
$$
R(\xi \cap B_t) \;\leq\; R(F(\xi \cap B_t)) \;\leq\; R(F(\xi) \cap B_t) \quad \hbox{for all $t\geq 0$.}
$$
\end{prop}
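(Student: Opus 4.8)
The plan is to turn $R$ into a deterministic maximum-matching quantity and then read off both inequalities as monotonicity statements. First I would record the reformulation $R(A)=|A|-M(A)$. Since the greedy heap-sorting algorithm was already noted to be optimal, $R(A)$ is the minimal number of trees of a forest on the atoms of $A$ in which an atom $p=(x_p,s_p,\nu_p)$ may be declared the parent of an atom $c=(x_c,s_c,\nu_c)$ only when $x_p<x_c$ and $s_p<s_c$, each atom receiving at most one parent and $p$ at most $\nu_p$ children. A forest on $n$ vertices with $m$ edges has $n-m$ trees, so minimizing trees amounts to maximizing the number of admissible edges; every admissible edge strictly increases both coordinates, so any admissible edge set is automatically acyclic with at most one parent per vertex, i.e. it is exactly a $b$-matching (parents carrying capacity $\nu$, children capacity $1$). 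Writing $M(A)$ for the maximal size of such a matching, we get $R(A)=|A|-M(A)$ for every finite set of atoms $A$, deterministically. As $\xi\cap B_t$ and $F(\xi)\cap B_t$ are a.s. finite for all $t$, this identity applies to the three quantities simultaneously and almost surely.

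For the left inequality, set $A_1=\xi\cap B_t$ and $A_2=F(\xi\cap B_t)=F(A_1)$, and note that $F$ is a position- and $\nu$-preserving bijection that lowers the time of an atom at position $x$ by $f(x)$. The key observation is that every edge admissible for $A_2$ is admissible for $A_1$: if $x_p<x_c$ and $s_p-f(x_p)<s_c-f(x_c)$, then $f(x_p)\le f(x_c)$ because $f$ is non-decreasing, hence $s_p<s_c$. Thus any $b$-matching of $A_2$ pulls back through $F^{-1}$ to a $b$-matching of $A_1$ with the same capacities, giving $M(A_2)\le M(A_1)$; since $|A_1|=|A_2|$ this is exactly $R(A_1)\le R(A_2)$.

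For the right inequality, set $A_3=F(\xi)\cap B_t$ and $E=A_3\setminus A_2$. A direct computation of the shifted time-coordinates shows $A_2\subseteq A_3$, that every atom of $E$ sits at shifted time in $(t-f(x),t]$, while every atom of $A_2$ at position $x'$ sits at shifted time at most $t-f(x')$. Using again that $f$ is non-decreasing, one deduces that no atom of $E$ can be the parent of an atom of $A_2$ (its time would have to exceed, rather than precede, that of the child). Consequently, in any $b$-matching of $A_3$, every edge meeting $E$ has its child endpoint in $E$; since a child carries at most one parent-edge, there are at most $|E|$ such edges, whereas the edges internal to $A_2$ form a $b$-matching of $A_2$. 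Hence $M(A_3)\le M(A_2)+|E|$, and since $|A_3|=|A_2|+|E|$ this rearranges into $R(A_2)=|A_2|-M(A_2)\le |A_3|-M(A_3)=R(A_3)$.

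The only genuinely delicate points are the matching reformulation and the charging argument ensuring that each $E$-incident edge is accounted for by a distinct $E$-child; once these are in place, both inequalities are immediate. I expect the main obstacle to be conceptual rather than computational: $R$ is \emph{not} monotone under adding arbitrary atoms (inserting a small, low atom can merge trees and decrease $R$, as the root process dynamics show), so the proof must exploit the precise geometry here — that the downward time-shift only tightens the admissibility of left–right pairs, and that the added atoms $E$ lie too high to serve as parents of the old ones. The simultaneity over all $t\ge 0$ then costs nothing, since for each realization the configuration inside $B_t$ is finite and the inequalities hold deterministically.
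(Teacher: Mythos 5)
Your proof is correct, and on the harder half of the statement it takes a genuinely different route from the paper's. The paper handles the right inequality essentially as you do in spirit: atoms of $E=(F(\xi)\cap B_t)\setminus F(\xi\cap B_t)$ cannot be ancestors of atoms of $F(\xi\cap B_t)$, so adding them can only increase the number of trees. But for the left inequality the paper argues dynamically: it builds both graphical representations through their root processes, growing the box from its right boundary, and proves by induction over the atoms encountered the pointwise domination \eqref{Eq: recurrencedomination}, namely $N^1_{[0,s+\tilde f(u)]}(u)\le N^2_{[0,s]}(u)$, from which the inequality follows at $u=1$. You replace this coupling-plus-induction by the static identity $R(A)=|A|-M(A)$, where $M(A)$ is the maximum admissible $b$-matching (your observation that admissible edge sets are automatically forests --- look at the maximal-time vertex of a putative cycle, which would need in-degree two --- is sound), after which the left inequality is a one-line pullback of admissible edges through $F^{-1}$ using that $f$ is non-decreasing, and the right inequality is the charging bound $M(A_3)\le M(A_2)+|E|$. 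What your approach buys is brevity and transparency: no root processes, no induction, and it makes the ``straightforward'' inequality fully rigorous as well. What it costs is the dependence of $R(A)=|A|-M(A)$ on the offline optimality of the heap-sorting algorithm; the paper asserts this optimality in the introduction but never proves it, so a self-contained write-up should include the short exchange-type argument for it, whereas the paper's root-process proof of the left inequality needs no such input. One cosmetic omission: you never address the claim that $F(\xi)$ is again a Poisson point process with the stated intensity; the paper dismisses this in one sentence (it is the standard mapping theorem for Poisson processes), but the sentence should appear.
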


\begin{figure}
\begin{center}
\includegraphics[height=7cm]{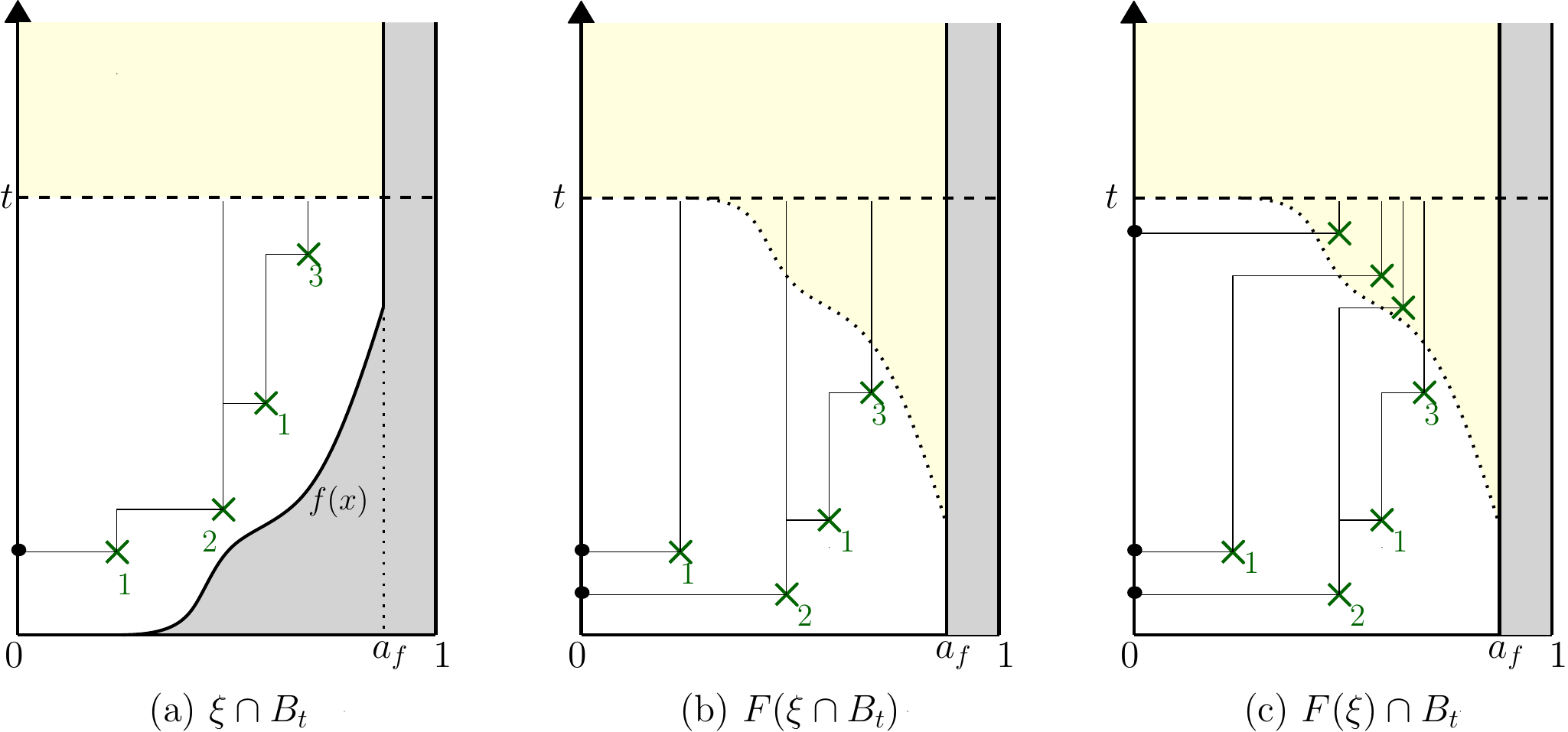}
\caption{\label{fig:falling}Illustration of Proposition \ref{Prop:couplagesousdomaine}. (a) $R(\xi \cap B_t) = 1$, (b) $R(F(\xi \cap B_t)) = 2$, (c) $R(F(\xi) \cap B_t) = 3$.} 
\end{center}
\end{figure}

\begin{rem}
\begin{itemize}
\item Graphically, the mapping $F$ corresponds to removing the hypograph of $f$  (greyed out in Figure \ref{fig:falling}) and then letting what is over it ``falls down to the ground''.
\item We need to transform the point process via the mapping $F$ because we cannot directly compare the number of trees between a domain and a sub-domain using the restriction of the same PPP. Indeed, the number of trees created by a set of atom in $[0,a_f]\times[0,t]$ may be smaller than the number of trees created when keeping only those atoms located in the sub-domain $\mathcal{D}_f \cap [0,a_f]\times[0,t]$.
\item The result of the proposition is, in fact, deterministic. For simplicity, we stated it only for a PPP with Lebesgue intensity which insures that no two atoms are aligned in either configurations (otherwise the graphical representation is not well defined).  
\end{itemize}
\end{rem}

\begin{proof}
First, the fact that $F(\xi)$ is a PPP on $(0,a_f)\times(0,+\infty)$ with intensity $dx\otimes dt\otimes \nu$ is easy so we omit the details. Second, the inequality
$$
R(F(\xi \cap B_t)) \;\leq\; R(F(\xi) \cap B_t)
$$
is also straightforward since atoms in $(F(\xi) \cap B_t) \backslash F(\xi \cap B_t)$ cannot be ancestor of atoms in $F(\xi \cap B_t)$ hence adding those atoms can only increase the number of trees in the graphical representation. 
It remains to prove the first inequality. The idea is to construct simultaneously the graphical representations for $\xi \cap B_t$ and $F(\xi \cap B_t)$ using their root processes. Thus, we start from an empty set of root on the left border $\{1\}\times [0,t]$ of $B_t$ and consider the increasing boxes $B_t(u) \defeq [1-u,1]\times [0,t]$ as $u$ goes from $0$ to $1$. Doing so for both point processes yields two root processes $\mathcal{R}^1(u,t)$ (for $\xi \cap B_t$) and $\mathcal{R}^2(u,t)$ (for $F(\xi \cap B_t)$). For $0\le a <b\le t$, let $N^i_{[a,b]}(u)$ denote the number of particles of  $\mathcal{R}^i(u,t)$ in the interval $[a,b]$. Since $u$ is running backward, let also use the notation $\tilde{f}(u)\defeq f(1-u)$, so that $\tilde{f}$ is non-increasing.
We now prove that, for all $u\in [0,1]$, we have
\begin{equation}\label{Eq: recurrencedomination}
\forall s\in [0,t-\tilde{f}(u)], \quad
N^1_{[0,s+\tilde{f}(u)]}(u) \le N^2_{[0,s]}(u),
\end{equation}
from which we will conclude that
$$
R(\xi \cap B_t) = N^1_{[0,t]}(1) \;\leq\; N^2_{[0,t-\tilde{f}(1)]}(1) 
\;\leq\; N^2_{[0,t]}(1) = R(F(\xi \cap B_t)).
$$
Note that \eqref{Eq: recurrencedomination} clearly holds for $u\leq 1- a_f$. Moreover, since $\tilde{f}$ is non-increasing, the first time \eqref{Eq: recurrencedomination}  fails must necessarily happen at a time where an atom of $\xi$ is encountered. Hence, by induction, we just need to prove that if \eqref{Eq: recurrencedomination} holds at $u^-$ and $\xi$ has an atom  at $(1-u,r,\nu)$, then \eqref{Eq: recurrencedomination} also holds at $u$. Since $\tilde{f}$ is continuous almost everywhere and $\xi$ is a PPP with Lebesgue spacial intensity, we can assume without loss of generality that $\tilde{f}$ is continuous at $u$ (in fact, this continuity assumption simplifies the proof slightly but is not required). Recall that an atom of $\xi$ located at $(1-u,r,\nu)$ corresponds to the atom $(1-u,r-\tilde{f}(u),\nu)$ in $F(\xi)$. Recall also that an atom at $(1-u,r,\nu)$ creates a new root with height $r$ (at time $u$) in the root process while removing the first $\nu$ particles above $r$ (if they exist). 

Thus, assume that $(1-u,r,\nu) \in \xi$ and that \eqref{Eq: recurrencedomination} holds up to time $u^-$. For $s<r-\tilde{f}(u)$, we have 
$$N^1_{[0,s+\tilde{f}(u)]}(u)=N^1_{[0,s+\tilde{f}(u)]}(u^-) \quad \mbox{ and }\quad N^2_{[0,s]}(u)=N^1_{[0,s]}(u^-),$$
so \eqref{Eq: recurrencedomination} holds at time $u$ for all $s<r-\tilde{f}(u)$. Let now $s\ge r-\tilde{f}(u)$. There are two cases to consider. Either  
$N^1_{[r,s+\tilde{f}(u)]}(u^-)\le N^2_{[r-\tilde{f}(u),s]}(u^-)$ in which case we use that
\begin{equation*}
N^1_{[0,s+\tilde{f}(u)]}(u) =  N^1_{[0,r]}(u^-)+1+ (N^1_{[r,s+\tilde{f}(u)]}(u^-)- \nu)^+
\end{equation*}
and 
\begin{equation*}
N^2_{[0,s]}(u) = N^2_{[0,r-\tilde{f}(u)]}(u^-)+1+ (N^2_{[r-\tilde{f}(u),s]}(u^-)- \nu)^+
\end{equation*}
showing that \eqref{Eq: recurrencedomination} holds at time $u$. Otherwise, we have $N^1_{[r,s+\tilde{f}(u)]}(u^-) > N^2_{[r-\tilde{f}(u),s]}(u^-)$ and we use instead that
\begin{equation*}
N^1_{[0,s+\tilde{f}(u)]}(u) =  N^1_{[0,s+\tilde{f}(u)]}(u^-)+1- N^1_{[r,s+\tilde{f}(u)]}(u^-)\wedge \nu
\end{equation*}
and
\begin{equation*}
N^2_{[0,s]}(u) = N^2_{[0,s]}(u^-)+1- N^1_{[r-\tilde{f}(u),s]}(u^-)\wedge \nu
\end{equation*}
showing again that \eqref{Eq: recurrencedomination} holds. 
\end{proof}

\subsection{Hammersley process and root process with sources and sinks}\label{sect:sourcesandsinks}
As we have seen in Sections \ref{section-poisson} and \ref{section-rootprocess}, the genealogies of the trees are consistent when we discover the atoms of $\Xi$ from bottom to top or from right to left. Hence the graphical representation inside a box $[a,b]\times [s,t]$ restricted to a sub-box on the form $[a',b]\times [s,t']$ for $a<a'$ and $t'<t$ coincides with that obtained by considering only the atoms inside the smaller box. This means that we can directly construct the graphical representation on a whole quarter plane $(-\infty,b]\times[0,\infty)$. However, as pointed out in Section \ref{sect-augmentationdroite}, the restriction property fails when $b$ increases \emph{i.e.} when the atoms of $\Xi$ are revealed from left to right since a new atom can alter the previously constructed trees. 

We aim to construct an infinite volume measure associated with the $\mu$-Hammersley process on the whole half plane $(-\infty,+\infty)\times[0,\infty)$. To do so, we shall need some kind of compatibility for the law of the process inside different sub-boxes in order to apply Kolmogorov's extension Theorem. This means, in particular, that we must alter the graphical representation inside a finite box in such way that its lines can now reach the bottom and the right boundary of the box (since this will necessarily happen when taking the trace from a larger box). This idea was carried out by Cator and Groeneboom in \cite{CatorGroeneboom} for the classical Hammersley process via the introduction of \emph{sources} and \emph{sinks} processes along, respectively, the bottom and right boundary of the box. 

Recall that $H$ is the $\mu$-Hammersley process constructed from the atom of $\Xi$ laying inside $(0,1)\times (0,\infty)$. Let 
$$\mathcal{C}=\{(u_i,\nu_i)\}_{1 \leq i\leq N}$$ 
be a finite set in $(0,1)\times \N$ and let 
$$\mathcal{S}=(s_i)_{i\ge 1}$$
 be a positive increasing sequence. We define the $\mu$-Hammersley process $(H^{\mathcal{C},\mathcal{S}}(t),t\geq 0)$  with \emph{sources} $\mathcal{C}$ and \emph{sinks} $\mathcal{S}$ as the Markov measure-valued process on $(0,1)\times \N$ defined by;
\begin{figure}
\begin{center}
\includegraphics[height=6cm]{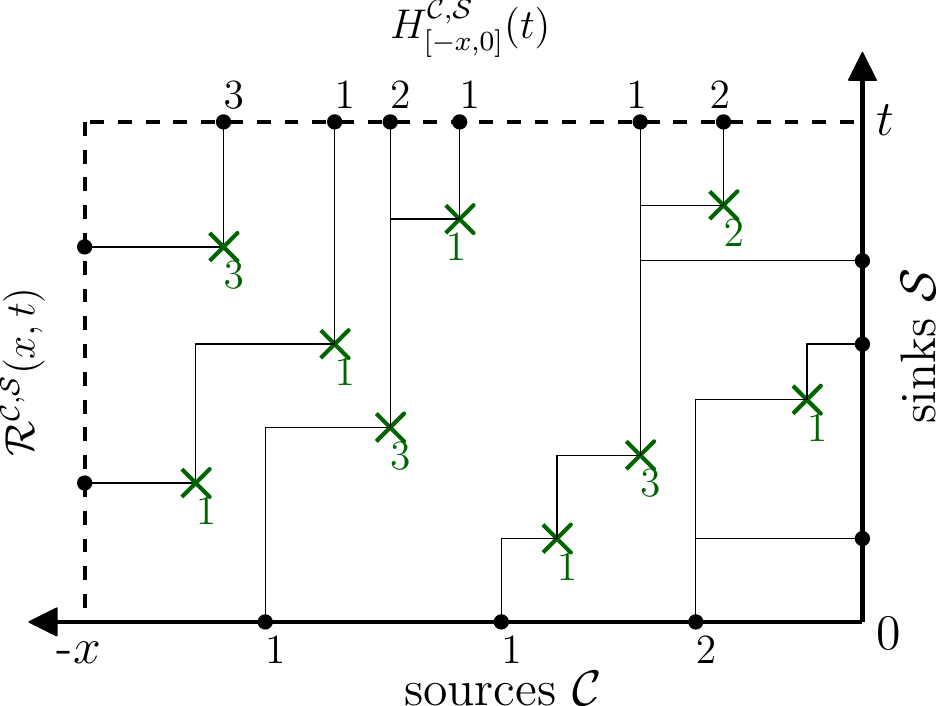}
\caption{Graphical representation of the $\mu$-Hammersley process $H^{\mathcal{C},\mathcal{S}}_{[-x,0]}$ and the root process $\mathcal{R}^{\mathcal{C},\mathcal{S}}(\cdot,t)$ with sources $\mathcal{C}$ and sinks $\mathcal{S}$ inside the box $[-x,0]\times[0,t]$. \label{fig:DefSourcesSinks} }
\end{center}
\end{figure}
\begin{itemize}
\item The process starts at time $0$ from the configuration of sources \emph{i.e.} $H^{\mathcal{C},\mathcal{S}}(0) =\mathcal{C}$.
\item Similarly to $H(t)$, an atom $(u,t,\nu)$ in $\Xi$ with $u\in (0,1)$  removes a life from the nearest alive particle on its left in $H^{\mathcal{C},\mathcal{S}}(t^-)$ (if any), and creates in $H^{\mathcal{C},\mathcal{S}}(t)$ a new particle at position $u$ with $\nu$ lives.
\item At times $s_i$, we remove a life from the largest alive particle in $H^{\mathcal{C},\mathcal{S}}(s_i^-)$. 
\end{itemize}
When both $\mathcal{C}$ and $\mathcal{S}$ are empty, we recover the initial $\mu$-Hammersley thus $H = H^{\emptyset,\emptyset}$. In general, one should think of the sources as the configuration resulting from atoms below the lower boundary of the box and the sinks as mimicking the effects of the atoms on the right of the box.

We can do a similar construction for the root process. In this case,  the sources $\mathcal{C}=(-u_i,\nu_i)_{i\ge 1}$  form a sequence in $(-\infty,0)\times \N$ with $u_i<u_{i+1}$ and the sinks $\mathcal{S}=\{s_i\}_{i\ge 1}$ are a set of points in $(0,1)$. We define the root process $(\mathcal{R}^{\mathcal{C},\mathcal{S}}(x))_{x\geq 0}$  with sources $\mathcal{C}$ and sinks $\mathcal{S}$ as the Markov measure-valued process on $(0,1)$ defined by:
 \begin{itemize}
 \item The process starts from the configuration given by the sinks \emph{i.e.} $\mathcal{R}^{\mathcal{C},\mathcal{S}}(0) =\mathcal{S}$.
\item Similarly to $\mathcal{R}(x)$, an atom $(-u,s,\nu)\in \Xi$ with $s<1$ creates, at time $u$, a new root with height $s$ while removing from $\mathcal{R}^{\mathcal{C},\mathcal{S}}(u)$ the $\nu$ lowest particles with height greater than $s$ (all of them if there are less than $\nu$).
\item For each source $(-u,\nu)\in \mathcal{C}$, we remove at time $u$ the $\nu$ lowest particles from $\mathcal{R}^{\mathcal{C},\mathcal{S}}(u)$  (all of them if there are less than $\nu$).
 \end{itemize}
Just as for the $\mu$-Hammersley process $H$ (resp. the root process $ \mathcal{R}$) without source nor sink, we can extend the previous definitions to larger boxes. We denote by $H^{\mathcal{C},\mathcal{S}}_{[a,b]}(t)$  and  $\mathcal{R}^{\mathcal{C},\mathcal{S}}(x,t)$ these processes. See Figure \ref{fig:DefSourcesSinks} for an illustration.

\section{Geometric case}\label{Sec:GeometricCase}

\begin{figure}
\begin{center}
\includegraphics[height=6cm]{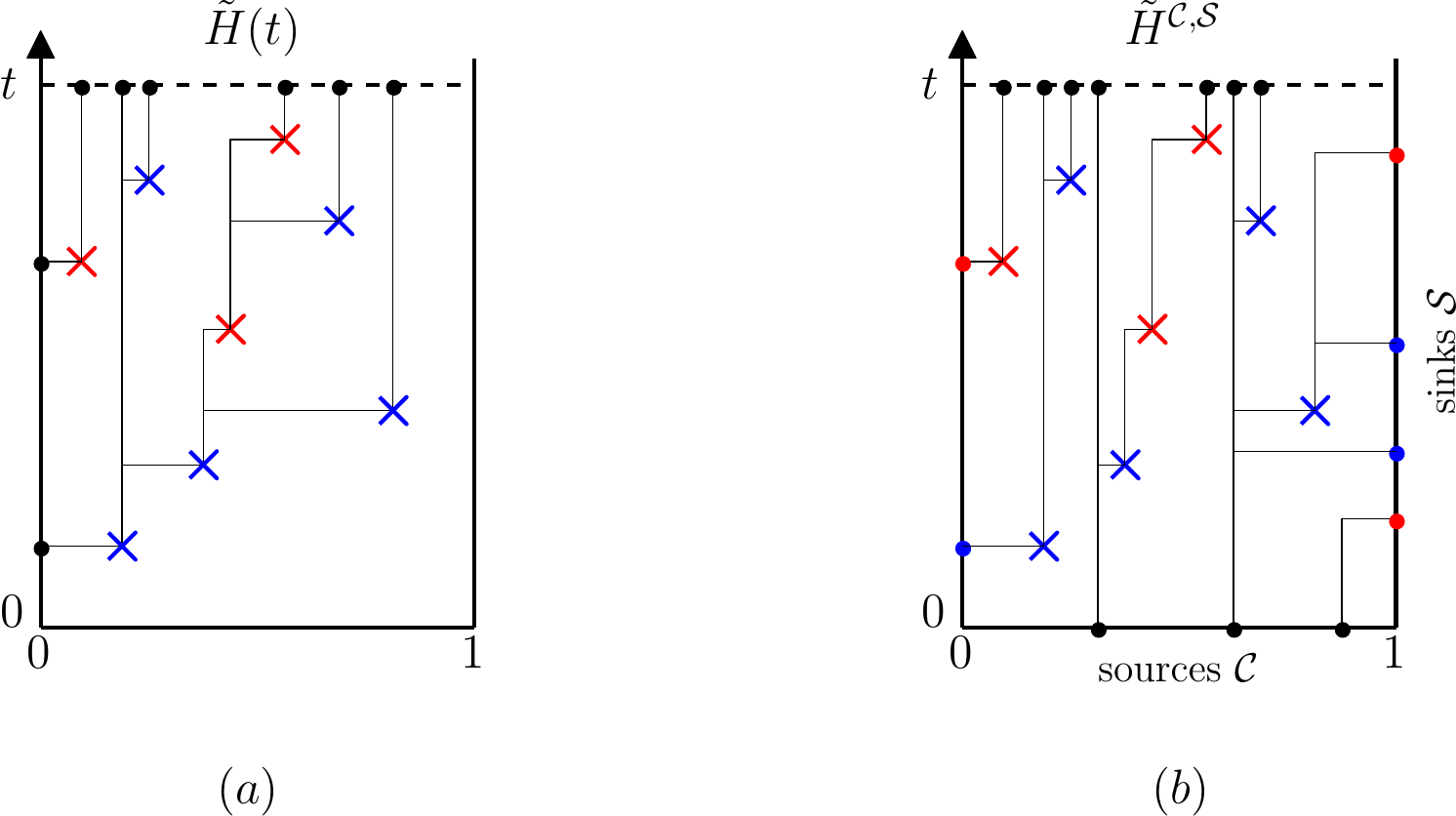}
\caption{\label{fig:GeometriqueRougeBleu}$(a)$ Representation of the geometric $\mu$-Hammersley process $\tilde{H}$. $(b)$ The process $\tilde{H}^{\mathcal{C},\mathcal{S}}$ with sources $\mathcal{C}$ and coloured sinks $\mathcal{S}$.}
\end{center}
\end{figure}

In this section, we concentrate our study on the case where the reproduction law $\mu$ of the trees is geometric with parameter $\alpha \in (0,1)$ \emph{i.e}
$$
\mu(i) = \alpha (1-\alpha)^{i-1}\quad\hbox{for $i=1,2,\ldots$}
$$
This family of distributions plays a special role for this model and it  is the only case for which we are able to derive explicit formula for the stationary measure of the process and by extension, compute the exact value of  $c_\mu$ as well as several other quantities. We do no know whether there are other families of reproduction laws which are exactly solvable. 

The reason why the geometric law is particularly interesting is that we can simplify our model by exploiting its lack of memory. Indeed, instead of attaching a positive number of lives to each atom of our point process, we can now consider that the number of lives of a particle is undecided at its creation and that we will flip a (biased $\alpha$) coin each time another atom attaches to it to decide whether it should stay alive or die. The main point here is that this coin will be held by the child particle instead of the father so that each particle need only to store a Bernoulli random variable instead of a $\mu$-distributed number of lives. 

More precisely, we can consider a Poisson measure on $(0,1)\times (0,+\infty)$ with unit Lebesgue intensity where each atom is coloured independently in blue (resp. red) with probability $(1-\alpha)$ (resp. $\alpha$). Given this coloured Poisson measure, we construct the particle system on $(0,1)$  using the following dynamic
\begin{itemize}
\item The process begins without any particle.
\item A blue atom $(u,t)$ adds a new  particle at position $u$  at time $t$ without interacting with the other particles. 
\item A red atom $(u,t)$ acts as an atom in the usual Hammersley process \emph{i.e.},  at time $t$, it adds a new particle at position $u$ but
kills the particle which has the largest position smaller than $u$ (if any).
\end{itemize}
It is elementary to check that this construction gives the position of the particles of a $\mu$-Hammersley process $H$ with geometric($\alpha$) offspring distribution. We will denote this process by $\tilde{H}$.

\begin{rem}
This construction does not keep track of the dead particles nor the remaining number of lives of each particle. However, since $\mu$ is geometric, the lack of memory of the law implies that, for each $t$, we can reconstruct the number of remaining lives of the particles in $H(t)$ by sampling i.i.d geometric random variables.
\end{rem}

\subsection{Stationarity from bottom to top}

In the case of the usual Hammersley's line process,  Cator and Groeneboom \cite{CatorGroeneboom} showed that $H^{\mathcal{C},\mathcal{S}}$ is stationary when the sources and sinks are independent PPP with constant intensity, respectively $\lambda du$ and $\frac{1}{\lambda}dt$ for some $\lambda>0$. In our setting, the tree structure of the genealogy implies that the intensity of the $\mu$-Hammersley process increases with time so we cannot hope to get the same kind of stationary measures. However, the next result shows that, if we start from a uniform PPP and choose the sinks process correctly, the $\mu$-Hammersley process $H^{\mathcal{C},\mathcal{S}}$ remains a uniform PPP at all times.

\begin{theo}[\textbf{``pseudo-stationary'' measure with sources and sinks}]\label{Th:SourcesStationnaires}
Fix $\lambda>0$. Let the sources process $\mathcal{C}_\lambda$ be a PPP on $(0,1)\times \N$ with constant intensity $\lambda du \otimes \mbox{Geom}(\alpha)$. Let the sinks process $\mathcal{S}_\lambda$ be an inhomogeneous PPP on $(0,\infty)$ with intensity $\frac{ds}{\lambda +(1-\alpha)s}$ . Then, for each $t$, the position of the particles in $H^{\mathcal{C}_\lambda,\mathcal{S}_\lambda}(t)$ is an homogeneous PPP with intensity $(\lambda +(1-\alpha)t)du$ and each particle has a geometric number of lives remaining, independent of everything else. 
\end{theo}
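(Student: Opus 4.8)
The plan is to reduce the statement about lives to a statement about positions, and then to establish the position claim by a forward (Kolmogorov) equation computation. Using the coloured construction of $\tilde H$ introduced above, the \emph{positions} of the particles of $H^{\mathcal{C}_\lambda,\mathcal{S}_\lambda}$ are governed by a pure-jump Markov process that no longer references the lives: sources contribute only their positions (their lives, like all lives, being realised through the coins of future children), which form a $\mathrm{PPP}(\lambda\,du)$ on $(0,1)$; each bulk atom is independently red (probability $\alpha$) or blue (probability $1-\alpha$), a blue atom inserts a particle at a uniform position while a red atom inserts one and deletes its nearest left neighbour; and, by the lack of memory of the geometric law, each sink of $\mathcal{S}_\lambda$ may be coloured red with probability $\alpha$, so that the \emph{effective} sinks (those that actually delete the rightmost particle) form a $\mathrm{PPP}$ on $(0,\infty)$ with intensity $\frac{\alpha\,ds}{\lambda+(1-\alpha)s}=\frac{\alpha}{\rho(s)}\,ds$, where $\rho(s)\defeq\lambda+(1-\alpha)s$. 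Since at every finite time there are a.s. finitely many particles and finitely many jumps, the law of the position configuration is characterised by its forward equation, so I only need to check that the ansatz ``$\eta_t$ is a $\mathrm{PPP}(\rho(t)\,du)$'' solves it with initial condition $\eta_0\sim\mathrm{PPP}(\lambda\,du)$.

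I would test the ansatz against the exponential functionals $\Phi_g(\eta)=\exp(-\sum_{x\in\eta}g(x))$, for which $\E[\Phi_g(\eta_t)]=\exp\!\big(-\rho(t)\int_0^1(1-e^{-g(u)})\,du\big)$ is the $\mathrm{PPP}$ Laplace functional. Splitting the generator $\mathcal{L}_t$ into the blue-insertion part (rate $1-\alpha$), the red insert-and-delete part (rate $\alpha$) and the effective-sink part (rate $\alpha/\rho(t)$) gives
$$\mathcal{L}_t\Phi_g(\eta)=\Phi_g(\eta)\Big[(1-\alpha)\!\int_0^1(e^{-g(u)}-1)\,du+\alpha\!\int_0^1(e^{g(p^-_\eta(u))-g(u)}-1)\,du+\tfrac{\alpha}{\rho(t)}(e^{g(\max\eta)}-1)\Big],$$
with the conventions $g(p^-_\eta(u))=0$ when $u<\min\eta$ and the last term absent when $\eta=\varnothing$; here $p^-_\eta(u)$ denotes the nearest point of $\eta$ to the left of $u$. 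Taking expectation, the pure-insertion term alone already produces $-(1-\alpha)\big(\int_0^1(1-e^{-g(u)})\,du\big)\,\E[\Phi_g(\eta_t)]$, which is exactly $\frac{d}{dt}\E[\Phi_g(\eta_t)]$ under the ansatz; this is precisely the required intensity growth $\rho'(t)=1-\alpha$.

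It therefore remains to prove the cancellation
$$\E\Big[\Phi_g(\eta)\Big(\alpha\!\int_0^1(e^{g(p^-_\eta(u))-g(u)}-1)\,du+\tfrac{\alpha}{\rho}(e^{g(\max\eta)}-1)\Big)\Big]=0,\qquad \eta\sim\mathrm{PPP}(\rho\,du).$$
This is the heart of the argument and the point at which the precise sink intensity $1/\rho(s)$ is \emph{forced}. I would evaluate both expectations by Palm/Mecke calculus: conditioning on the location $p$ of the nearest left neighbour of $u$ (respectively on the location $m$ of the maximum) turns each term into an explicit integral against the $\rho e^{-\rho(\cdot)}$ weights coming from the exponential gaps of the $\mathrm{PPP}$, and the two integrals are opposite. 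The mechanism is already visible in the linearisation $g=\epsilon\gamma$: both terms reduce to $\pm\,\alpha\epsilon\int_0^1\gamma(x)\,e^{-\rho(1-x)}\,dx$, the common weight $e^{-\rho(1-x)}$ being the probability that a point at $x$ has no point of the $\mathrm{PPP}$ to its right, i.e. that it is the current maximum. Carrying this to all orders is the main obstacle: it is a bookkeeping computation, but it must be handled carefully because of the two boundary conventions (no left neighbour; empty configuration).

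Granting the cancellation, the ansatz solves the forward equation, and since the position process is a well-posed finite-activity jump process, uniqueness of the solution forces $\eta_t\sim\mathrm{PPP}(\rho(t)\,du)$ for every $t$. Finally, for the lives: in the coloured construction a particle alive at time $t$ dies at its first future red child, so its number of remaining lives equals the number of future hits up to and including the first red one, which is $\mathrm{Geom}(\alpha)$, independent across particles and independent of the positions since it depends only on the coins of future atoms. Combining this with the $\mathrm{PPP}(\rho(t))$ law of the positions yields the full statement.
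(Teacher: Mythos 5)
Your reduction is exactly the paper's: the red/blue memoryless representation, the identification of the effective sinks as a PPP of intensity $\alpha/\rho(s)\,ds$ (in your notation $\rho(s)=\lambda+(1-\alpha)s$), the splitting of the generator into blue insertion, red insert-and-delete, and sink parts, the observation that the blue part alone accounts for the intensity growth $\rho'(t)=1-\alpha$, and the recovery of the geometric lives by lack of memory at the end. All of that is correct. The problem is that your argument stops at its central step: the cancellation
$$\E\Big[\Phi_g(\eta)\Big(\alpha\int_0^1\big(e^{g(p^-_\eta(u))-g(u)}-1\big)\,du+\tfrac{\alpha}{\rho}\big(e^{g(\max\eta)}-1\big)\Big)\Big]=0,\qquad \eta\sim\mathrm{PPP}(\rho\,du),$$
is asserted, checked only to first order in $g=\epsilon\gamma$, and then ``granted''. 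The first-order computation verifies that the red-plus-sink dynamics preserves the intensity measure (expected values of linear statistics); it says nothing about preservation of the Poisson \emph{law}, which is what the identity for arbitrary $g$ encodes -- a dynamics can very well preserve intensity while creating correlations. Dismissing the all-orders verification as ``bookkeeping'' also understates it: the paper's proof of the exactly analogous generator identity for the root process (Theorem \ref{Th:RacinesStationnaires}) is a multi-page computation over ordered simplices, with precisely the two boundary cases you flag. So, as written, the proposal has a genuine gap at the one point that carries the content of the theorem.

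The gap is closeable by citation rather than computation, and that is what the paper does. Your displayed identity is nothing but the generator form of the known stationarity of the classical Hammersley line process with Poisson sources and sinks (Aldous--Diaconis \cite{AldousDiaconis95}, Cator--Groeneboom \cite{CatorGroeneboom}), applied with bulk rate $\alpha$ and sink rate $\alpha/\rho$: red atoms perform the usual Hammersley dynamics, and a $\mathrm{PPP}(\rho\,du)$ configuration is invariant for them provided the rightmost particle is killed at rate $\alpha/\rho$. The paper simply invokes this result for the red part and concludes by linearity of generators. Either make that identification and cite it, or carry out your Palm/Mecke computation to all orders in the style of the proof of Theorem \ref{Th:RacinesStationnaires}; with one of these two completions your argument becomes a correct proof along essentially the same lines as the paper's.
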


See Figure \ref{Fig:SourcesStationnaires} (a) for an illustration of this result.

\begin{rem}\begin{itemize}
 \item In the degenerated case $\alpha=1$, we recover the stationarity of the usual Hammersley process with sources and sinks. 
\item The theorem above is stated for the $\mu$-Hammersley process on the interval $[0,1]$. However, using the scaling property, it is straightforward to extend the result to $H^{\mathcal{C}_\lambda,\mathcal{S}_\lambda}_{[a,b]}$ for any $a<b$. Then, using the compatibility relation as $a\to -\infty$, we get the same result for the half line $\mu$-Hammersley process $H^{\mathcal{C}_\lambda,\mathcal{S}_\lambda}_{(-\infty,b]}$.
\end{itemize}
\end{rem}

\begin{figure}
\begin{center}
\includegraphics[height=7cm]{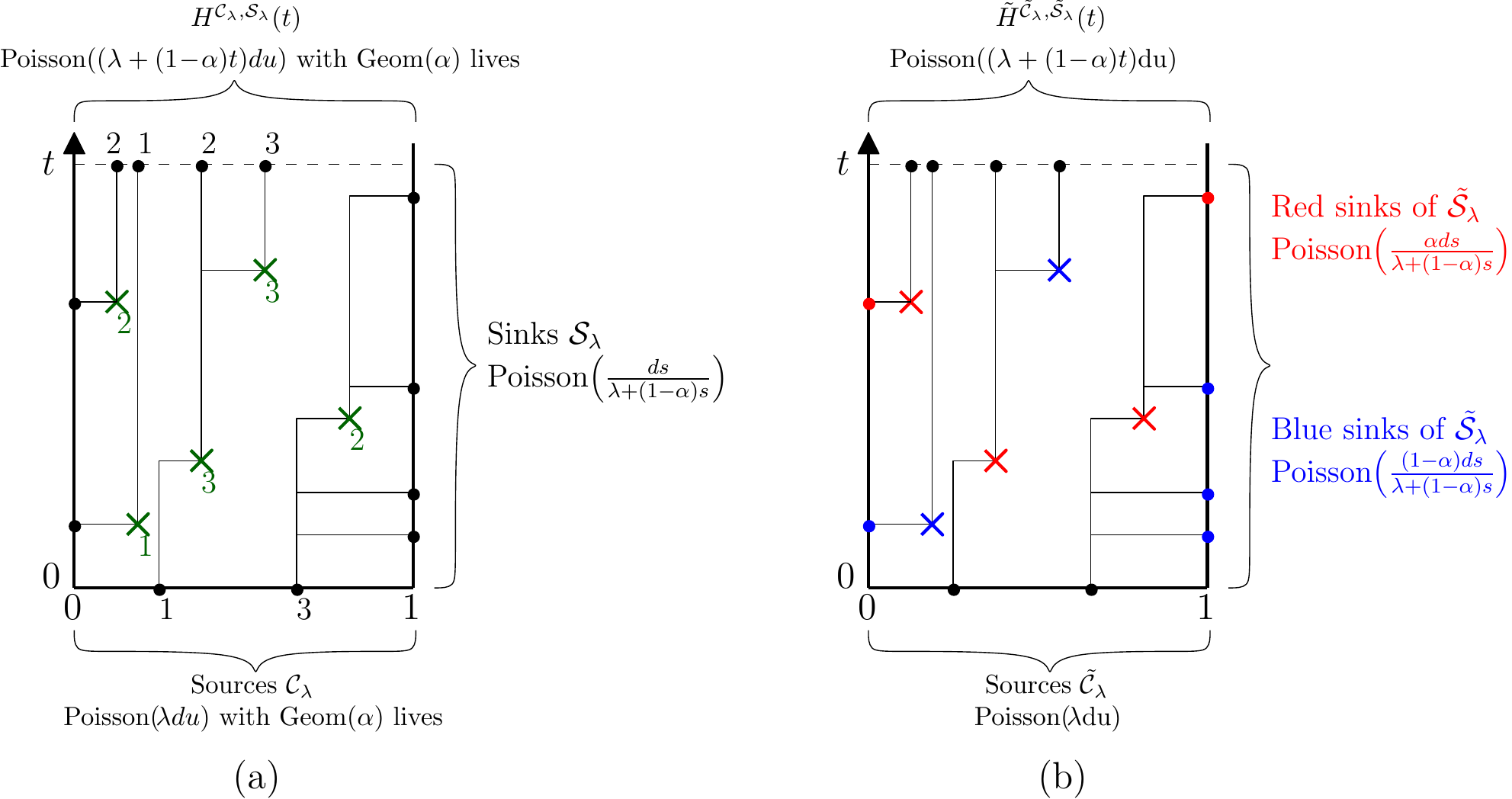}
\caption{\label{Fig:SourcesStationnaires} (a) Illustration of Theorem \ref{Th:SourcesStationnaires} and (b) its equivalent representation with the red/blue construction.}
\end{center}
\end{figure}

\begin{proof} We first translate the statement of the theorem in term of the red/blue representation $\tilde{H}$. Thus, we need to consider the process $\tilde{H}$ where sources and sinks are added. Since particles in $\tilde{H}$ do not carry their number of lives, the sources process is, in this case just a point process on $(0,1)$. On the other hand, the sinks process is such that each sink also carries with it a color, red or blue, which decides whether or not it shall kill its father. We must prove the following result:
\begin{itemize}
\item Suppose that $\tilde{\mathcal{C}}_\lambda$ is a PPP on $(0,1)$ with constant intensity $\lambda du$. Suppose that $\tilde{\mathcal{S}}_\lambda$ on $[0,\infty)$ is independent of $\tilde{\mathcal{C}}_\lambda$ and is an inhomogeneous PPP  with intensity $\frac{ds}{\lambda +(1-\alpha)s}$ where each sink is coloured independently in red with probability $\alpha$ or in blue with probability $1-\alpha$. Then, at all time $t$, $\tilde{H}^{\tilde{\mathcal{C}}_\lambda,\tilde{\mathcal{S}}_\lambda}(t)$ is a PPP with intensity $(\lambda +(1-\alpha)t)du$.
\end{itemize}
See Figure \ref{Fig:SourcesStationnaires} (b) for an illustration. Let us remark that the blue sinks do not, in fact, affect the particle system. However, they do change the graphical representation inside a box by adding horizontal lines so they cannot be dispensed with.

The proof uses the fact that the $\tilde{H}^{\tilde{\mathcal{C}}_\lambda,\tilde{\mathcal{S}}_\lambda}$ may be seen as a superposition of two simpler processes. The blue atoms form a PPP with constant intensity $(1-\alpha)$ that adds up to $\tilde{H}^{\tilde{\mathcal{C}}_\lambda,\tilde{\mathcal{S}}_\lambda}$,  linearly increasing its intensity at rate $(1-\alpha)$. On the other hand, the red atoms  form a PPP with constant intensity $\alpha$ acting with the classical Hammersley dynamic. As we already mentioned, the Poisson distribution of intensity $\lambda>0$ is stationary for the usual Hammersley process provided that the sinks are distributed according to a PPP of intensity $\alpha/\lambda$ (here $\alpha$ is the rate of the red atoms).

Therefore, suppose that at time $t$, $\tilde{H}^{\tilde{\mathcal{C}}_\lambda,\tilde{\mathcal{S}}_\lambda}(t)$ is distributed as a PPP of intensity $\Lambda_t du \defeq(\lambda+(1-\alpha)t)du$, then during the interval time $[t,t+dt]$, the  red atoms arrive at rate $\alpha$ and the red sinks arrive at rate $\alpha/\Lambda_t$, hence leaving invariant the process. On the other hand, during the same time interval, the blue atoms increase the intensity of  $\tilde{H}^{\tilde{\mathcal{C}}_\lambda,\tilde{\mathcal{S}}_\lambda}(t)$ from $\Lambda_t du$ to $(\Lambda_t+ (1-\alpha)dt) du$. Moreover, the law of $\tilde{H}^{\tilde{\mathcal{C}}_\lambda,\tilde{\mathcal{S}}_\lambda}(t)$ is characterized by its infinitesimal generator. Using the linearity property of generators, the generator of $\tilde{H}^{\tilde{\mathcal{C}}_\lambda,\tilde{\mathcal{S}}_\lambda}(t)$ is obtained as the sum of the generator of the red and blue processes which concludes the proof. 
\end{proof}

Theorem \ref{Th:SourcesStationnaires} describes the behaviour of the $\mu$-Hammersley process starting from an initial set of particles distributed as a uniform PPP with intensity $\lambda du$. We wish to take $\lambda = 0$ in order to understand how the process evolves when it starts from an initially empty set of particles. This is not possible in the case of the usual Hammersley process ($\alpha = 1$) as it would require to let the intensity of the sinks process to go to $+\infty$ everywhere. However, we can do it whenever $\alpha < 1$ by considering a sinks process $\mathcal{S}_0$ that has an accumulation point at $0$. Let us point out that allowing an infinite number of sinks at the origin is not a problem when defining the $\mu$-Hammersley process $H^{\emptyset,\mathcal{S}_0}$ on $[0,1]$ that starts without particles. This is because particles in the $\mu$-Hammersley process are only created by the atoms of $\Xi$ hence we only need to consider the sinks located above the minimum height of this set of atoms. More precisely, for each realization of $\Xi$, there exists $\varepsilon>0$  such that the box $[0,1]\times(0,\varepsilon]$ contains no atom of $\Xi$. Then, the $\mu$-Hammersley process is simply empty until time $\varepsilon$ and is subsequently constructed using the standard construction (thus, all the sinks below $\varepsilon$ are ignored).

\begin{cor}[\textbf{``pseudo-stationary'' measure starting from $0$}]\label{cor:sourcefromzero}
Suppose that the sinks process $\mathcal{S}_0$ is an inhomogeneous PPP with intensity $\frac{ds}{(1-\alpha)s}$ on $(0,\infty)$. For every $t$, $H^{\emptyset,\mathcal{S}_0}(t)$  is an homogeneous PPP with intensity $(1-\alpha)t du$ where each particle has a geometric number of remaining lives, independent of everything else. 
\end{cor}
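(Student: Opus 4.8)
The plan is to obtain the corollary as the $\lambda\to 0$ limit of Theorem~\ref{Th:SourcesStationnaires}. Letting $\lambda\to 0$ there, the source intensity $\lambda\,du$ vanishes, the sink intensity $\frac{ds}{\lambda+(1-\alpha)s}$ increases pointwise to $\frac{ds}{(1-\alpha)s}$, and the predicted intensity at time $t$ becomes $(1-\alpha)t\,du$, so at the level of the intensity measures everything already matches. The only issue is that the limiting sink intensity fails to be integrable near the origin, so $\mathcal{S}_0$ carries infinitely many sinks accumulating at $0$; the whole difficulty is to show that these extra sinks are harmless and that the configurations genuinely converge to $H^{\emptyset,\mathcal{S}_0}(t)$.

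First I would build, on a single probability space, a coupling of $(\mathcal{C}_\lambda,\mathcal{S}_\lambda)_{\lambda>0}$ together with $\mathcal{S}_0$, all driven by one common PPP $\Xi$. I take $\mathcal{S}_0$ to be a PPP of intensity $\frac{ds}{(1-\alpha)s}$ and, for $\lambda>0$, obtain $\mathcal{S}_\lambda$ by an independent monotone thinning of $\mathcal{S}_0$, retaining a sink at height $s$ with probability $\frac{(1-\alpha)s}{\lambda+(1-\alpha)s}$; the thinnings are nested (via one uniform mark per point) so that $\mathcal{S}_\lambda\uparrow\mathcal{S}_0$ as $\lambda\downarrow 0$. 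The sources $\mathcal{C}_\lambda$ are taken as a family of PPPs with intensity $\lambda\,du\otimes\mathrm{Geom}(\alpha)$, coupled to be non-increasing in their number of atoms as $\lambda\downarrow 0$. By the thinning and superposition properties of Poisson processes, each $(\mathcal{C}_\lambda,\mathcal{S}_\lambda)$ has exactly the law required by Theorem~\ref{Th:SourcesStationnaires}, so $H^{\mathcal{C}_\lambda,\mathcal{S}_\lambda}(t)$ is a PPP of intensity $(\lambda+(1-\alpha)t)\,du$ with independent geometric lives.

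Next I would show that, under this coupling, $H^{\mathcal{C}_\lambda,\mathcal{S}_\lambda}(t)=H^{\emptyset,\mathcal{S}_0}(t)$ (as marked configurations, positions and lives) for all small enough $\lambda$, almost surely. Let $\varepsilon=\varepsilon(\Xi)>0$ be the minimal height of an atom of $\Xi$ in $(0,1)\times(0,\infty)\times\N$, which is positive a.s. because $\Xi$ has finite intensity on any slab $(0,1)\times(0,\delta]\times\N$. Three facts then combine: (i) the number of sources of $\mathcal{C}_\lambda$ is non-increasing with expectation $\lambda\to 0$, hence eventually $0$; (ii) the extra sinks $\mathcal{S}_0\setminus\mathcal{S}_\lambda$ lying above $\varepsilon$ form a decreasing family of finite sets, since their expected cardinality is $\int_\varepsilon^\infty\big(\tfrac{1}{(1-\alpha)s}-\tfrac{1}{\lambda+(1-\alpha)s}\big)\,ds=O(\lambda/\varepsilon)\to 0$, so this set is eventually empty and $\mathcal{S}_\lambda$ coincides with $\mathcal{S}_0$ above $\varepsilon$; (iii) since neither process has any particle below height $\varepsilon$ (no sources and no atoms of $\Xi$ there), every sink below $\varepsilon$ acts on the empty configuration and is therefore irrelevant. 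On the a.s.\ eventually realised event where there are no sources and the sinks agree above $\varepsilon$, both processes are driven by identical data, so they coincide at every time, in particular at $t$.

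Finally I would conclude. Since $H^{\mathcal{C}_\lambda,\mathcal{S}_\lambda}(t)=H^{\emptyset,\mathcal{S}_0}(t)$ for all small $\lambda$ almost surely, and since a PPP of intensity $(\lambda+(1-\alpha)t)\,du$ with independent geometric lives converges weakly, as $\lambda\to 0$, to a PPP of intensity $(1-\alpha)t\,du$ with independent geometric lives, the a.s.\ limit $H^{\emptyset,\mathcal{S}_0}(t)$ has exactly this law, which is the assertion of the corollary. The genuinely delicate point is steps (ii)--(iii): one must simultaneously verify that the infinitely many low sinks of $\mathcal{S}_0$ never interact with any particle and that only finitely many sinks change above $\varepsilon$ as $\lambda$ varies. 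This is precisely where the well-definedness discussion preceding the statement (existence of the empty strip $[0,1]\times(0,\varepsilon]$) is the essential input.
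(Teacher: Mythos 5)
Your proposal is correct and follows essentially the same route as the paper: a nested coupling of the sink processes $\mathcal{S}_\lambda \subset \mathcal{S}_0$, the observation that the a.s.\ empty strip $[0,1]\times(0,\varepsilon]$ renders the accumulating low sinks harmless, and identification of the law of $H^{\emptyset,\mathcal{S}_0}(t)$ as the weak limit of the pseudo-stationary laws of Theorem~\ref{Th:SourcesStationnaires}. If anything, your version is slightly tighter on one point: the paper compares the source-free processes $H^{\emptyset,\mathcal{S}_\lambda}$ and $H^{\emptyset,\mathcal{S}_0}$, leaving implicit why the former has (asymptotically) the law given by Theorem~\ref{Th:SourcesStationnaires}, whereas you keep the sources $\mathcal{C}_\lambda$ in the approximating process --- to which that theorem applies verbatim --- and couple them so that they vanish almost surely for small $\lambda$.
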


\begin{proof}
Fix $t>0$. We can couple $\mathcal{S}_0$ with the family of sinks processes $\mathcal{S}_\lambda$ with finite intensity $\frac{ds}{\lambda + (1-\alpha)s}$ in such way that $\mathcal{S}_\lambda \subset \mathcal{S}_{\lambda'} \subset \mathcal{S}$ for every $\lambda>\lambda'>0$. Moreover, for each realization of these sinks processes and for any $\varepsilon>0$, there is some $\lambda_0$ such that, for $\lambda < \lambda_0$, the sinks processes $\mathcal{S}_0$ and $\mathcal{S}_\lambda$ have the same roots over the interval $[\varepsilon,t]$. On the other hand, the minimum height of the atoms of $\Xi$ inside $[0,1]\times(0,\infty)$ is strictly positive. If this height is larger than $\varepsilon$, then the $\mu$-Hammersley processes $H^{\emptyset,\mathcal{S}_0}$ and $H^{\emptyset,\mathcal{S}_\lambda}$ coincide up to time $t$. Thus, this happens with probability going to $1$ as $\lambda$ tend to $0$ which concludes the proof of the corollary. 
\end{proof}

\subsection{Stationarity of the root process}
We now turn our attention to the root process. Our next result states that, starting from the same sources $\mathcal{C}_\lambda$ and sinks $\mathcal{S}_\lambda$ as in the previous section, the root process $\mathcal{R}^{\mathcal{C}_\lambda,\mathcal{S}_\lambda}$ is stationary. Recall from Section \ref{section-rootprocess} that for each fixed $t$, $\mathcal{R}^{\mathcal{C}_\lambda,\mathcal{S}_\lambda}$ is a Markov process indexed by "time" $-x$ and taking its values in the set of point measures on $(0,t)$.

\begin{theo}[\textbf{Stationary measure for the root process}]\label{Th:RacinesStationnaires}

Fix $\lambda,t>0$. Suppose that the sources process $\mathcal{C}_\lambda$ is a PPP on $(-\infty,0]\times \N$ with  intensity $\lambda du \otimes \mbox{Geom}(\alpha)$ and the sinks process $\mathcal{S}_\lambda$ on $(0,t)$ is an inhomogeneous PPP with intensity $\frac{ds}{\lambda +(1-\alpha)s}$. Then, the root process with sources and sinks $\mathcal{R}^{\mathcal{C}_\lambda,\mathcal{S}_\lambda}(\cdot,t)$ is stationary. Namely, for all $x\ge 0$,  $\mathcal{R}^{\mathcal{C}_\lambda,\mathcal{S}_\lambda}(x,t)$ is an inhomogeneous PPP with intensity $\frac{ds}{\lambda +(1-\alpha)s}$.
\end{theo}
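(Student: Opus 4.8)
The plan is to view $(\mathcal{R}^{\mathcal{C}_\lambda,\mathcal{S}_\lambda}(x,t),\,x\ge 0)$ as a finite-activity Markov jump process in the variable $x$ and to prove that the law $\pi$ of an inhomogeneous PPP on $(0,t)$ with intensity $\rho(s)\,ds$, where $\rho(s)\defeq 1/m(s)$ and $m(s)\defeq\lambda+(1-\alpha)s$, is \emph{invariant} for it. Since the process starts from $\mathcal{R}^{\mathcal{C}_\lambda,\mathcal{S}_\lambda}(0,t)=\mathcal{S}_\lambda\sim\pi$, invariance immediately yields $\mathcal{R}^{\mathcal{C}_\lambda,\mathcal{S}_\lambda}(x,t)\sim\pi$ for every $x\ge 0$, which is exactly the claim. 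The total jump rate per unit $x$ is finite, equal to $t+\lambda$ (atoms contribute $\int_0^t ds=t$, sources contribute $\lambda$), so the generator $\mathcal{G}$ is well defined and it suffices to check $\int \mathcal{G}F\,d\pi=0$ for $F$ in the measure-determining class of Laplace functionals $F(\eta)=\exp(-\langle\eta,g\rangle)$, $g\ge 0$.

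By Proposition \ref{Prop:transitionR} and the construction of Section \ref{sect:sourcesandsinks} one has $\mathcal{G}=\mathcal{G}_{\mathrm{atom}}+\mathcal{G}_{\mathrm{source}}$: an atom transition occurs at rate $ds$ at height $s\in(0,t)$, inserting a point at $s$ and deleting the $\nu$ lowest points above $s$ with $\nu\sim\mathrm{Geom}(\alpha)$; a source transition occurs at rate $\lambda$, deleting the $\nu$ lowest points with $\nu\sim\mathrm{Geom}(\alpha)$. The first step is to linearize these geometric deletions through the red/blue device of this section: colour each root independently red with probability $\alpha$ and blue with probability $1-\alpha$. Then deleting the $\nu\sim\mathrm{Geom}(\alpha)$ lowest points above $s$ is exactly deleting every root above $s$ up to and including the first red root above $s$ (and all of them if there is none). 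Crucially, under $\pi$ decorated with i.i.d.\ colours the red roots form an independent $\mathrm{PPP}(\alpha\rho)$, so the first red root above $s$ sits at height $y$ with density $\alpha\rho(y)\exp(-\alpha\int_s^y\rho)$, the complementary event (no red above $s$) having probability $\exp(-\alpha\int_s^t\rho)$, where $\int_a^b\rho=\frac{1}{1-\alpha}\log\frac{m(b)}{m(a)}$.

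Next I would evaluate $\int\mathcal{G}F\,d\pi$ on $F=\exp(-\langle\eta,g\rangle)$. Splitting $\eta$ into its restrictions below $s$, between $s$ and the first red root, and above it, and using the independence of a PPP across disjoint regions, the factors $e^{-g(y)}$ carried by the deleted points cancel against the multiplicative ``credit'' $e^{g(y)}$ produced by their removal, so that what survives depends only on the point newly inserted at $s$ and on the location $y$ of the first red root. The atom part then reduces to an integral over $s\in(0,t)$ and over the first-red height $y$ against the density above, while the source part is the same expression with the insertion suppressed and $s$ replaced by $0$. The whole computation collapses by means of the single identity $1/\rho(s)=m(s)=\lambda+(1-\alpha)s$, equivalently $\rho'=-(1-\alpha)\rho^2$: this is precisely the relation that balances, at every height $y$, the creation of roots (Lebesgue density $1$) against the total deletion rate of a root at $y$, which one verifies equals $m(y)=1/\rho(y)$ once the atom contribution $m(y)-\lambda(\lambda/m(y))^{\alpha/(1-\alpha)}$ and the source contribution $\lambda(\lambda/m(y))^{\alpha/(1-\alpha)}$ are added. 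Hence $\int\mathcal{G}F\,d\pi=0$ for all $g$, and $\pi$ is invariant.

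The main obstacle is the deletion step: a single atom at height $s$ may erase an unbounded (geometric) number of roots straddling no fixed level, so the dynamics cannot be reduced to any finite family of counting functionals. The red/blue reformulation together with the memorylessness of the geometric law is exactly what controls this, turning each deletion into ``erase up to the first red point,'' whose effect on a Poisson configuration is captured by the single exponential density of the first red point and therefore collapses to one integral. The remaining care goes into the boundary cases (no red point above $s$, so the whole top is erased; and the source acting on an empty configuration), into checking that Laplace functionals form a core for this finite-activity generator, and into confirming that $\pi$ is a genuine a.s.\ finite point process, which holds since $\int_0^t\rho=\frac{1}{1-\alpha}\log(m(t)/\lambda)<\infty$.
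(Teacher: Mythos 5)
Your proposal is correct, and it proves the theorem by a genuinely different route than the paper. The paper explicitly declines to use any colouring device here (``it is not clear to us how we could exploit this representation'') and instead applies the space-time dilation $F_\lambda$ to turn sources and sinks into unit PPPs (the atoms then acquiring intensity $e^{(1-\alpha)s}\,du\,ds$), after which $\E[\mathcal{A}L_\phi(\xi)]=0$ is verified by conditioning on the number of Poisson points and evaluating ordered-simplex integrals term by term. You stay in the original coordinates and resurrect a thinning idea, but applied to the stationary configuration rather than to the driving atoms: colouring the roots of $\eta\sim\pi$ makes each geometric deletion ``erase up to the first red root'', and since the red roots form a $\mathrm{PPP}(\alpha\rho)$ the generator acting on a Laplace functional reduces to a one-dimensional integral. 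Your claimed collapse does go through: with $\rho=1/m$, $\Phi(a,b)\defeq\exp(-\int_a^b(1-e^{-g})\rho)$, and $G(s)$ the conditional Laplace transform of what survives above $s$ after a deletion initiated at $s$, one gets
\begin{equation*}
G'(s)=\alpha\rho(s)\bigl(G(s)-\Phi(s,t)\bigr),\qquad
e^{-g(s)}P(s)=m(s)P'(s)+(1-\alpha)P(s)+\alpha\,\Phi(0,t),\quad P\defeq\Phi(0,\cdot)\,G,
\end{equation*}
and integration by parts using $m'=1-\alpha$ yields $\int_0^te^{-g(s)}P(s)\,ds+\lambda G(0)=(t+\lambda)\Phi(0,t)$, which is exactly the cancellation against the total jump rate $(t+\lambda)\E_\pi[F]$ that invariance requires. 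What each approach buys: yours is shorter, avoids the coordinate change and the combinatorial bookkeeping, and isolates the structural reason for stationarity (each root at height $y$ is deleted at total rate $m(y)=1/\rho(y)$ while roots appear at unit Lebesgue rate); the paper's heavier computation needs no conditioning trick and, as the authors note, doubles as a standalone proof of Theorem \ref{Th:SourcesStationnaires}. Two points to tighten in a final write-up: state that the colouring is resampled at each transition (a fixed colouring would correlate successive deletions; this is harmless since only single transitions enter $\int\mathcal{G}F\,d\pi$), and make clear that the intensity balance you emphasize is by itself only a first-moment (necessary) condition --- it is the Laplace-functional identity above that upgrades it to invariance of the Poisson law.
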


\begin{proof} The proof we provide here does not use the red/blue representation of the geometrically distributed $\mu$-Hammersley process since it is not clear to us how we could exploit this representation. Instead, we make a direct computation of the infinitesimal generator to show the stationarity. Let us mention that this approach may also be used to give a standalone proof of Theorem \ref{Th:SourcesStationnaires}  which does not rely on previous results concerning Hammersley's line process.

First we make a space-time dilation of the box in order to map the vertical inhomogeneous PPP $\mathcal{S}_\lambda$ with intensity $\frac{ds}{\lambda +(1-\alpha)s}$ to a  PPP with unit intensity while simultaneously renormalizing  the horizontal homogeneous process $\mathcal{C}_\lambda$ to have rate $1$. 
More precisely, set
$$
f_\lambda(s)\defeq \tfrac{1}{1-\alpha}\left(\log (\lambda+(1-\alpha)s)-\log(\lambda)\right)
$$
and consider the image of the graphical representation  by the application
$$
F_\lambda:\begin{array}{r c l}
(-\infty,0]\times [0,t] & \to & (-\infty,0]\times [0,f_\lambda(t)]\\
(-y,s) &\mapsto &(-\lambda y,f_\lambda(s)).
\end{array}
$$
Subsequently to this transformation, the new graphical representation on $(-\infty,0]\times[0,f_\lambda(t)]$ is such that (see Figure \ref{Fig:RacinesStationnaires}):
\begin{itemize}
\item The sources on $(-\infty,0]$ are distributed as a PPP with intensity $1$.
\item The sinks on $ [0,f_\lambda(t)]$ are distributed as a PPP with intensity $1$.
\item The atoms inside the strip are distributed as an inhomogeneous PPP with intensity 
$$
\frac{du}{\lambda}\times \frac{ds}{f_\lambda'\left(f_\lambda^{(-1)}(s)\right)}=e^{(1-\alpha)s}du ds.
$$
\end{itemize}
Since the mapping is non-decreasing in each coordinate, the dynamic of the image of the root process is unchanged: each atom/source kills a geometric number of roots above it (and adds a new root in case of an atom). Thus, we must now prove that, in this new setting, the PPP with unit intensity is stationary for the dynamics.

\begin{figure}
\begin{center}
\includegraphics[height=5.5cm]{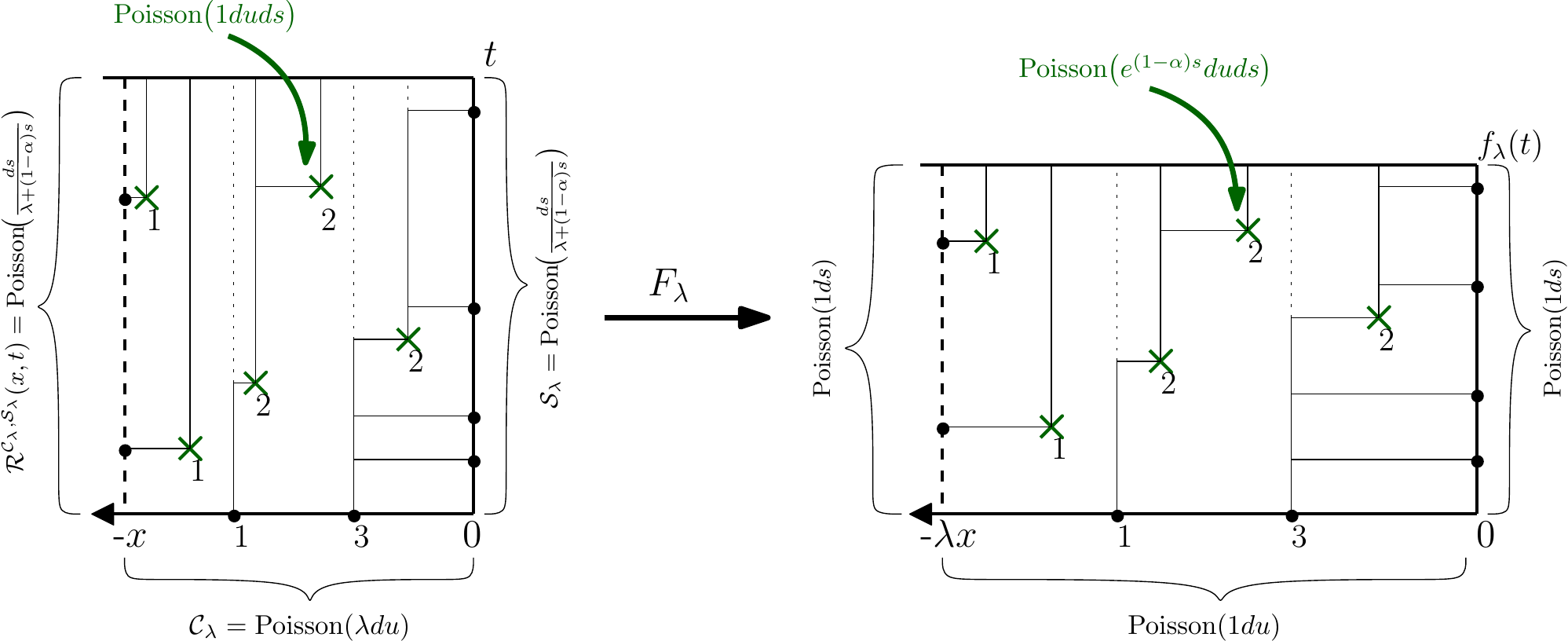}
\caption{\label{Fig:RacinesStationnaires} Illustration of Theorem \ref{Th:RacinesStationnaires} and mapping by $F_\lambda$.}
\end{center}
\end{figure}

We fix  $t$ throughout the proof and we set $T=f_\lambda(t)$. Let 
$$\xi \defeq \sum_i \delta_{r_i}$$
be finite point measure on $(0,T)$ whose atoms are all distinct and ordered increasingly $0 < r_1 < r_2 \ldots < r_n < T$. Given $s\geq0$, we define $r^s_1,r^s_2,\ldots$ to be the (possibly empty) increasing sequence of atoms of $\xi$ located after position $s$. For any positive test function $\phi$, we define the functional
$$
L_\phi(\xi)\defeq\exp\Big(-\sum_{r\in \xi} \phi(r)\Big). 
$$
Let $\mathcal{A}$ be the infinitesimal generator of the mapped root process. According its dynamic, we can write (with the slight abuse of notation $r_{j} = \emptyset$ for $j>n$),
\begin{align*}
\mathcal{A}L_\phi(\xi) = & \int_{0}^{T} e^{(1-\alpha)s}\mathrm{ds} \sum_{k=1}^{+\infty} \alpha(1-\alpha)^{k-1} \left(L_\phi\left(\xi\cup \{s\}\setminus \{r_{1}^s,r_{2}^s,\dots,r_{k}^s\}\right)- L_\phi(\xi) \right)\\
& +  \sum_{k=1}^{+\infty} \alpha(1-\alpha)^{k-1} \left(L_\phi\left(\xi \setminus \{r_{1},r_{2},\dots,r_{k}\}\right)- L_\phi(\xi) \right).
\end{align*}
The first term corresponds to the contribution of an atom located at height $s$. The second term represents the contribution of a source. In both terms, $k$ is the geometric number of roots removed. We use the notation $\beta \defeq 1-\alpha$ to slightly shorten the following formulas. Then, we rewrite the previous equation in the form
\begin{align}
\mathcal{A}L_\phi(\xi) = & \int_{0}^{T} e^{\beta s}\mathrm{ds} \sum_{k=1}^{+\infty} \alpha\beta^{k-1} L_\phi\left(\xi\cup \{s\}\setminus \{r_{1}^s,r_{2}^s,\dots,r_{k}^s\}\right) &\text{[\textbf{atom}]} \label{eq:termatom}\\
&+  \sum_{k=1}^{+\infty} \alpha\beta^{k-1} L_\phi\left(\xi \setminus \{r_{1},r_{2},\dots,r_{k}\}\right)&\text{[\textbf{source}]}\label{eq:termsource}\\ 
&-\left(1+\int_{0}^{T} e^{\beta s}\mathrm{ds}\right)L_\phi(\xi). \label{eq:termreste}
\end{align}

Let us now assume that $\xi$ is a homogeneous PPP with intensity $1$ on $[0,T]$ which is independent of the $\mu$-Hammersley process. We prove that, for every test function $\phi$, we have $\mathbb{E}\left[\mathcal{A}L_\phi(\xi)\right]=0$. We examine the source and atom terms of the previous equation separately. 

\medskip
\noindent\textbf{Contribution of an atom.}
We condition on $\xi$ having $n$ particles at positions $0<r_1<\dots <r_n<T$. This happens with probability $e^{-T}dr_1\ldots dr_n$ An atom that falls in $(r_i,r_{i+1})$ removes particles $r_{i+1},\dots,r_{(i+k)\wedge n}$ where $k$ is the value of the geometric random variable associated with this atom. Thus, splitting the integral depending on  where the atom lies, we get
\begin{multline*}
\E\left[\int_{0}^{T} e^{\beta s}\mathrm{ds} \sum_{k=1}^{+\infty} \alpha\beta^{k-1} L_\phi\left(\xi\cup \{s\}\setminus \{r_{1}^s,r_{2}^s,\dots,r_{k}^s\}\right)\right]\\
\begin{aligned}
=e^{-T}  \sum_{n=0}^\infty \sum_{i=0}^n \int_{0< r_1<\dots < r_{n}<T} \hspace{-1.5cm}dr_1\ldots dr_n \int_{r_i}^{r_{i+1}} \!\!\!\!ds \bigg(e^{\beta s}\sum_{k=1}^{n-i-1} \alpha \beta^{k-1} L_\phi(r_1,\ldots,r_i,s,r_{i+k+1},\ldots,r_{n})
& \\
+ e^{\beta s }(\beta^{n-i-1}\wedge 1) L_\phi(r_1,\ldots,r_i,s)\bigg). &
\end{aligned}
\end{multline*}
The term $\beta^{n-i-1}\wedge 1$ in the equation above comes from the fact that the formula $\beta^{k-1}$ for the probability that the geometric random variable is at least $k$ fails for $k=0$. We gather the two integrals in the sum above to integrate over $n+1$ ordered variables. This yields
\begin{multline*}
\E\left[\int_{0}^{T} e^{\beta s}\mathrm{ds} \sum_{k=1}^{+\infty} \alpha\beta^{k-1} L_\phi\left(\xi\cup \{s\}\setminus \{r_{1}^s,r_{2}^s,\dots,r_{k}^s\}\right)\right]\\
\begin{aligned}
 = & e^{-T}  \sum_{n=0}^\infty \sum_{i=0}^n \int_{0< v_1<\dots < v_{n+1}<T} \hspace{-2cm} dv_1 \ldots dv_{n+1} \bigg(e^{\beta v_{i+1}}\sum_{k=1}^{n-i-1} \alpha \beta^{k-1} L_\phi(v_1,\ldots,v_i,v_{i+1},v_{i+k+2},\ldots,v_{n+1})\bigg)\\
 &+ e^{-T}  \sum_{n=0}^\infty \sum_{i=0}^n \int_{0< v_1<\dots < v_{n+1}<T} \hspace{-2cm} dv_1\ldots dv_{n+1}  
\; e^{\beta v_{i+1}}(\beta^{n-i-1}\wedge 1) L_\phi(v_1,\ldots,v_i,v_{i+1}). 
 \end{aligned}
\end{multline*}
Let $A$ (resp.$B$) denote the first (resp. second) term of the right-hand side. Using the equality 
$$\int_{a<w_1<w_2<\dots <w_k<b} \hspace{-1.8cm} dw_1\ldots dw_k= \frac{(b-a)^k}{k!}$$ 
and integrating over the $k$ free variables $v_{i+2},\ldots,v_{i+k+1}$ and inverting the summation, we get
\begin{eqnarray*}
A\!\!&=&\!\! e^{-T}   \sum_{i=0}^\infty \sum_{k=1}^{\infty}\sum_{n=k+i+1}^\infty\int_{0< u_1<\dots < u_{n+1-k}<T}\hspace{-2.5cm} du_1 \ldots du_{n+1-k} \left(e^{\beta u_{i+1}} \alpha \beta^{k-1} L_\phi(u_1,\ldots,u_{n+1-k})\frac{(u_{i+2}-u_{i+1})^k}{k!}\right)\\
&=&\!\! e^{-T}   \sum_{i=0}^\infty \sum_{k=1}^{\infty}\sum_{l=i+2}^\infty\int_{0< u_1<\dots < u_{l}<T}  \hspace{-1.5cm} du_1 \ldots du_{l} \left(e^{\beta u_{i+1}} \alpha \beta^{k-1} L_\phi(u_1,\ldots,u_{l})\frac{(u_{i+2}-u_{i+1})^k}{k!}\right)\\
&=&\!\! \frac{\alpha}{\beta}e^{-T}   \sum_{l=2}^\infty \sum_{i=0}^{l-2}\int_{0< u_1<\dots < u_{l}<T}  \hspace{-1.5cm} du_1 \ldots du_{l} \left( e^{\beta u_{i+1}}  L_\phi(u_1,\ldots,u_{l})(e^{\beta( u_{i+2}- u_{i+1})}-1)\right)\\
&=&\!\! \frac{\alpha}{\beta}e^{-T}   \sum_{l=2}^\infty \int_{0< u_1<\dots < u_{l}<T}   \hspace{-1.5cm} du_1 \ldots du_{l} \;L_\phi(u_1,\ldots,u_{l})(e^{\beta u_{l}}- e^{\beta u_{1}})\\
&=&\!\! \frac{\alpha}{\beta}e^{-T}   \sum_{l=1}^\infty \int_{0< u_1<\dots < u_{l}<T}  \hspace{-1.5cm} du_1 \ldots du_{l}\; L_\phi(u_1,\ldots,u_{l})(e^{\beta u_{l}}- e^{\beta u_{1}}).
\end{eqnarray*}
Performing similar operations with the second term, we obtain
\begin{eqnarray*}
B&=& e^{-T}   \sum_{i=0}^\infty \sum_{n=i}^\infty\int_{0< u_1<\dots < u_{i+1}<T} \hspace{-2cm} du_1 \ldots du_{i+1} \left(e^{\beta u_{i+1}} L_\phi(u_1,\ldots,u_{i+1})\frac{(T-u_{i+1})^{n-i}}{(n-i)!}(\beta^{n-i-1}\wedge 1)\right)\\
&= & e^{-T}   \sum_{i=1}^\infty \sum_{m=0}^\infty\int_{0< u_1<\dots < u_{i}<T}  \hspace{-1.7cm} du_1 \ldots du_{i} \left( e^{\beta u_{i}}  L_\phi(u_1,\ldots,u_{i})\frac{(T-u_{i})^{m}}{m!} (\beta^{m-1}\wedge 1)\right)\\
& = & \frac{e^{-T}}{\beta} \sum_{i=1}^\infty \sum_{m=0}^\infty\int_{0< u_1<\dots < u_{i}<T} \hspace{-1.7cm}du_1 \ldots du_{i} \left(e^{\beta u_{i}} L_\phi(u_1,\ldots,u_{i}) \frac{(T-u_{i})^{m}}{m!}\beta^m\right) \\
&& \qquad + e^{-T}(1-\frac{1}{\beta})   \sum_{i=1}^\infty \int_{0< u_1<\dots < u_{i}<T} \hspace{-1.7cm}du_1 \ldots du_{i}\;e^{\beta u_{i}} L_\phi(u_1,\ldots,u_{i}) \\
&=&\frac{e^{-T+\beta T}}{\beta}   \sum_{i=1}^\infty \int_{0< u_1<\dots < u_{i}<T} \hspace{-1.7cm} du_1 \ldots du_{i} \;L_\phi(u_1,\ldots,u_{i}) \\
&&\qquad-\frac{\alpha}{\beta}e^{-T}  \sum_{i=1}^\infty \int_{0< u_1<\dots < u_{i}<T} \hspace{-1.7cm}du_1 \ldots du_{i} \;e^{\beta u_{i}} L_\phi(u_1,\ldots,u_{i}).
\end{eqnarray*}

\medskip
\noindent\textbf{Contribution of a source.} 
We split the computation of \eqref{eq:termsource} according whether or not the source kills all the particles.
\begin{multline*}
\mathbb{E}\left[  \sum_{k=1}^{+\infty}\alpha \beta^{k-1} L_\phi\left(\xi\setminus \{r_{1},\dots,r_{k}\}\right)\right]\\
= \alpha\sum_{k=1}^{+\infty} \P\{|\xi|\le k\}\beta^{k-1}+\alpha \sum_{k=1}^{+\infty} \sum_{n=k+1}^\infty \beta^{k-1}e^{-T} \int_{0< r_{k+1}<\dots < r_{n}<T} \hspace*{-2cm} dr_{k+1}\dots dr_{n} \; L_\phi(r_{k+1},\ldots,r_n)\frac{r_{k+1}^k}{k!}
\end{multline*}
where we integrated over the $k$  free variables to obtain the second term.
Denote by $A'$ (resp. $B'$) be the first (resp. second) term of the right-hand side.
We find that
\begin{eqnarray*}
A'&=& \alpha\sum_{k=1}^{+\infty} \sum_{n=0}^k e^{-T} \frac{T^k}{k!}\beta^{k-1}\\ &=&\frac{e^{-T}}{\beta}(e^{\beta T}-\alpha)
\end{eqnarray*}
and 
\begin{eqnarray*}
B'& = &\alpha \sum_{k=1}^{+\infty} \sum_{m=1}^\infty \beta^{k-1}e^{-T} \int_{0< u_{1}<\dots < u_{m}<T}\hspace*{-1.7cm}  du_{1}\dots du_{m}\; L_\phi(u_1,\ldots,u_m)\frac{u_1^k}{k!} \\
&=&\frac{\alpha}{\beta}e^{-T}\sum_{m=1}^\infty  \int_{0< u_{1}<\dots < u_{m}<1} \hspace*{-1.7cm}  du_{1}\dots du_{m}\; L_\phi(u_1,\ldots,u_m)(e^{\beta u_1}-1).
\end{eqnarray*}
Putting everything together, we get
\begin{multline*}
A+B+A'+B'\\
\begin{aligned}
&\begin{aligned}
=&\;\;\frac{\alpha}{\beta}e^{-T}   \sum_{l=1}^\infty \int_{0< u_1<\dots < u_{l}<T}  \hspace{-1.5cm} du_1 \ldots du_{l}\; L_\phi(u_1,\ldots,u_{l})(e^{\beta u_{l}}- e^{\beta u_{1}}) \\
&+ \frac{e^{-T+\beta T}}{\beta}   \sum_{i=1}^\infty \int_{0< u_1<\dots < u_{i}<T} \hspace{-1.7cm} du_1 \ldots du_{i} \;L_\phi(u_1,\ldots,u_{i}) -\frac{\alpha}{\beta}e^{-T}  \sum_{i=1}^\infty \int_{0< u_1<\dots < u_{i}<T} \hspace{-1.7cm}du_1 \ldots du_{i} \;e^{\beta u_{i}} L_\phi(u_1,\ldots,u_{i})\\
&+ \frac{e^{-T}}{\beta}(e^{\beta T}-\alpha) \\
&+\frac{\alpha}{\beta} e^{-T}\sum_{m=1}^\infty  \int_{0< u_{1}<\dots < u_{m}<1} \hspace*{-1.7cm}  du_{1}\dots du_{m}\; L_\phi(u_1,\ldots,u_m)(e^{\beta u_1}-1)
\end{aligned}\\
&= \frac{e^{-T}}{\beta}(e^{\beta T}-\alpha) + (\frac{e^{-T+\beta T}}{\beta} -\frac{\alpha e^{-T}}{\beta})  \sum_{i=1}^\infty \int_{0< u_1<\dots < u_{i}<T}\hspace{-1.6cm} du_1 \ldots du_{i}\; L_\phi(u_1,\ldots,u_{i}) \\
&=\frac{e^{-T}}{\beta}(e^{\beta T}-\alpha) \sum_{i=0}^\infty \int_{0< u_1<\dots < u_{i}<T} \hspace{-1.6cm}du_1 \ldots du_{i}\; L_\phi(u_1,\ldots,u_{i})  \\
&= \frac{1}{\beta}(e^{\beta T}-\alpha) \mathbb{E}[L_\phi(\xi)]
\end{aligned}
\end{multline*}
which is exactly the opposite of the expectation of \eqref{eq:termreste}. Therefore, we conclude that $\mathbb{E}\left[\mathcal{A}L_\phi(\xi)\right]=0$ for any test function $\phi$ which in turn implies that the law of $\xi$ is stationary for the root process, completing the proof of the theorem.
\end{proof}

Similarly to Corollary \ref{cor:sourcefromzero}, we want to take $\lambda=0$ in Theorem \ref{Th:RacinesStationnaires} to understand the stationary measure when there is no source. This can be done by considering again the point process $\mathcal{S}_0$ with an accumulation point at the origin. Just as for the $\mu$-Hammersley process, there is no problem in defining the root process $\mathcal{R}^{\emptyset,\mathcal{S}_0}(\cdot,t)$ since, at any time $x$, only the roots above the atom of $\Xi$ with minimum height in $(-x,0)\times (0,t)$ may be modified. Moreover, we can even let $t$ go to $\infty$ since the root processes $\mathcal{R}^{\emptyset,\mathcal{S}_0}(\cdot,t)$ are compatible for different values of $t$. 

\begin{cor}[\textbf{Stationary measure for the root process without source}]\label{cor:sinkstationnaireinfinie}
Fix $0<t\leq \infty$. Let $\mathcal{S}_0$ be  an inhomogeneous PPP on $(0,t)$ with intensity $\frac{ds}{(1-\alpha)s}$. Then, the root process $\mathcal{R}^{\emptyset,\mathcal{S}_0}(\cdot,t)$ without source and with initial sinks distribution $\mathcal{S}_0$ is stationary.
\end{cor}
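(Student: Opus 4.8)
The plan is to mirror the proof of Corollary~\ref{cor:sourcefromzero}: I would obtain the stationary law without source by letting the source intensity $\lambda$ tend to $0$ in Theorem~\ref{Th:RacinesStationnaires}, controlling the limit through an explicit coupling. Since the processes $\mathcal{R}^{\emptyset,\mathcal{S}_0}(\cdot,t)$ are compatible in $t$, it suffices to treat $t<\infty$, stationarity for every finite $t$ transferring to $t=\infty$ by restriction. So I fix $t<\infty$ and $x>0$ (the case $x=0$ being the definition) and aim to show that $\mathcal{R}^{\emptyset,\mathcal{S}_0}(x,t)$ is a PPP with intensity $\frac{ds}{(1-\alpha)s}$, which is exactly the law of the initial sink configuration $\mathcal{S}_0$.

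First I would realise everything on one probability space. Couple $\mathcal{S}_\lambda$ with $\mathcal{S}_0$ by thinning: since $\frac{1}{\lambda+(1-\alpha)s}\le\frac{1}{(1-\alpha)s}$, keep each atom of $\mathcal{S}_0$ at height $s$ independently with probability $\frac{(1-\alpha)s}{\lambda+(1-\alpha)s}$; the retained atoms form a PPP with intensity $\frac{ds}{\lambda+(1-\alpha)s}$, and $\mathcal{S}_\lambda\subset\mathcal{S}_0$ with $\mathcal{S}_\lambda\uparrow\mathcal{S}_0$ as $\lambda\downarrow0$. Let $\mathcal{C}_\lambda$ be an independent PPP on $(-\infty,0]\times\N$ with intensity $\lambda\,du\otimes\mathrm{Geom}(\alpha)$, and drive both processes with the same $\Xi$. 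Writing $s_{\min}(x)$ for the minimal height of the atoms of $\Xi$ inside $(-x,0)\times(0,t)$, which is almost surely strictly positive, I fix $\varepsilon>0$ and introduce
$$E_{\lambda,\varepsilon}\defeq\{s_{\min}(x)>\varepsilon\}\cap\{\mathcal{C}_\lambda\cap[-x,0]=\emptyset\}\cap\{\mathcal{S}_\lambda\cap[\varepsilon,t]=\mathcal{S}_0\cap[\varepsilon,t]\}.$$
On $E_{\lambda,\varepsilon}$ the two root processes coincide on $[\varepsilon,t]$ for every time $y\le x$: no atom of $\Xi$ lies below height $\varepsilon$, so every atom creates a root at height $\ge\varepsilon$ and removes roots lying strictly above its own height, hence roots in $[\varepsilon,t]$; as the two configurations agree on $[\varepsilon,t]$ at time $0$ and no source acts in $[-x,0]$ for the $\lambda$-process, the two restrictions to $[\varepsilon,t]$ undergo identical transitions. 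The roots below $\varepsilon$ (which differ) are frozen and irrelevant to this restriction. Bounding the complement,
$$\P(E_{\lambda,\varepsilon}^{\,c})\le \P\big(s_{\min}(x)\le\varepsilon\big)+\big(1-e^{-\lambda x}\big)+\P\big(\mathcal{S}_\lambda\cap[\varepsilon,t]\ne\mathcal{S}_0\cap[\varepsilon,t]\big),$$
whose last two terms vanish as $\lambda\downarrow0$, so that $\limsup_{\lambda\downarrow0}\P(E_{\lambda,\varepsilon}^{\,c})\le\P(s_{\min}(x)\le\varepsilon)$.

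I would then test against functionals $L_\phi(\xi)=\exp(-\sum_{r\in\xi}\phi(r))$ with $\phi\ge0$ supported in $[\varepsilon_0,t]$; as $\varepsilon_0$ ranges over $(0,t)$ these characterise the law of a point process that is locally finite away from $0$. For $\varepsilon<\varepsilon_0$, $L_\phi$ depends only on the restriction to $[\varepsilon,t]$, so the coincidence on $E_{\lambda,\varepsilon}$ and $0\le L_\phi\le1$ give $|\E[L_\phi(\mathcal{R}^{\emptyset,\mathcal{S}_0}(x,t))]-\E[L_\phi(\mathcal{R}^{\mathcal{C}_\lambda,\mathcal{S}_\lambda}(x,t))]|\le\P(E_{\lambda,\varepsilon}^{\,c})$. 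By Theorem~\ref{Th:RacinesStationnaires} the second expectation equals $\exp\!\big(-\int_{\varepsilon_0}^t(1-e^{-\phi(s)})\frac{ds}{\lambda+(1-\alpha)s}\big)$, which converges by dominated convergence on $[\varepsilon_0,t]$ to $\exp\!\big(-\int_{\varepsilon_0}^t(1-e^{-\phi(s)})\frac{ds}{(1-\alpha)s}\big)$. Sending first $\lambda\downarrow0$ and then $\varepsilon\downarrow0$, and using $\P(s_{\min}(x)\le\varepsilon)\to0$, I obtain $\E[L_\phi(\mathcal{R}^{\emptyset,\mathcal{S}_0}(x,t))]=\exp\!\big(-\int_{0}^t(1-e^{-\phi(s)})\frac{ds}{(1-\alpha)s}\big)$, identifying the law as the claimed PPP and yielding stationarity; the case $t=\infty$ then follows by compatibility.

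The main obstacle will be exactly the accumulation of sinks at the origin: for a fixed cutoff $\varepsilon$ the event $\{s_{\min}(x)>\varepsilon\}$ is \emph{not} almost sure, so a single limit in $\lambda$ cannot force global coincidence of the two root processes. I expect this to be handled precisely by the two-stage limit above — first $\lambda\downarrow0$ to remove the sources and synchronise the sinks above $\varepsilon$, then $\varepsilon\downarrow0$ — combined with the observation that Laplace functionals supported away from $0$ already determine the limiting law.
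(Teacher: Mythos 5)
Your proposal is correct and follows the same route as the paper's own proof: couple $\mathcal{S}_\lambda\subset\mathcal{S}_0$ by thinning, run the root processes with sources $\mathcal{C}_\lambda$ and without sources off the same $\Xi$, and let $\lambda\downarrow 0$ using the stationarity of Theorem \ref{Th:RacinesStationnaires}. One point deserves emphasis: your event $E_{\lambda,\varepsilon}$ contains the condition $\{s_{\min}(x)>\varepsilon\}$, which the paper's proof does not state --- the paper only assumes coincidence of the sinks above height $s$ and absence of sources, and claims that the restrictions to $[s,t]$ then agree for all times $y\le x$. Your extra condition is in fact necessary for that implication: an atom of $\Xi$ at height $s'<\varepsilon$ with $\nu\ge 2$ lives removes the $\nu$ lowest roots above $s'$, and since $\mathcal{S}_0$ has strictly more roots than $\mathcal{S}_\lambda$ in $(s',\varepsilon)$, this removal can reach above $\varepsilon$ in the $\lambda$-process while staying below $\varepsilon$ in the other, destroying the coincidence of the restrictions. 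So discrepancies below the cutoff are only ``frozen and irrelevant,'' as you put it, when no atom of the driving process falls under the cutoff, which is exactly what $\{s_{\min}(x)>\varepsilon\}$ guarantees at the price of the two-stage limit ($\lambda\downarrow 0$ first, then $\varepsilon\downarrow 0$) and the identification of the law through Laplace functionals supported away from the origin. In short, your write-up is a more careful rendering of the paper's argument, and the care is not cosmetic.
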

Observe that since $\int_0^{\varepsilon} \tfrac{ds}{(1-\alpha)s}=+\infty$ there are infinitely many sinks in every interval $(0,\varepsilon)$. Thus $\mathcal{S}_0$ has an accumulation point at the origin, as wanted.

\begin{proof} We only need to prove the result for $t<\infty$. Again, the proof goes by uniform approximation. We construct  $\mathcal{R}^{\emptyset,\mathcal{S}_0}(\cdot,t)$ together with the root processes $\mathcal{R}^{\mathcal{C}_\lambda, \mathcal{S}_\lambda}(\cdot,t)$ with sources $\mathcal{C}_\lambda$ distributed as a PPP with intensity $\lambda du \otimes \mbox{Geom}(\alpha)$ and starting from an inhomogeneous PPP $\mathcal{S}_{\lambda}$ with intensity $\frac{ds}{\lambda +(1-\alpha)s}$. All theses processes are constructed on the same space and using the same PPP $\Xi$. Moreover, we couple the initial sinks processes in such way that $\mathcal{S}_\lambda \subset \mathcal{S}_0$. Now, if the initial configurations $\mathcal{S}_\lambda$ and $\mathcal{S}_0$ coincide above height $s$ \emph{i.e.}
\begin{equation}\label{eq:couplRoot0}
\mathcal{S}_\lambda \cap [s,t] =  \mathcal{S}_0 \cap [s,t],
\end{equation}
and if there is no source in the interval $[-x,0]$ then, for any $y\leq x$, 
\begin{equation*}
\mathcal{R}^{\mathcal{C}_\lambda, \mathcal{S}_\lambda}(y,t) \cap [s,t] = \mathcal{R}^{\emptyset, \mathcal{S}_0}(y,t) \cap [s,t].
\end{equation*}
The corollary now follows from the fact that, $x$ and $s$ being fixed, the probability of finding a source on $[-x,0]$ goes to $0$ when $\lambda\to 0$ while, simultaneously, the probability of \eqref{eq:couplRoot0} being true goes to $1$.  
\end{proof}

\subsection{Geometric Hammersley Process on $\R$. Stationnary Half plane representation}\label{sec:halfplanegeom}

In Section \ref{section-poisson}, we defined the $\mu$-Hammersley process on a bounded interval  $[a,b]$ \emph{i.e} we  constructed the graphical representation $\mathcal{G}_{a,b}$ inside a strip $[a,b]\times[0,\infty)$. As we already noticed, we can grow this strip from the left by letting $a$ tends to $-\infty $ without modifying  the representation already constructed. Thus, for every realization of the initial PPP $\Xi$ on the upper half-plane $\mathbb{H} \defeq \R\times (0,\infty)$, we can construct a graphical representation $\mathcal{G}_b$ for the quarter-plane $\mathbb{H}_b \defeq (-\infty,b]\times (0,\infty)$. This provides a construction for the $\mu$-Hammersley process on the half-line $(-\infty,b]$ for any $b$. We wish now to define the process on the whole line \emph{i.e} we want construct a graphical representation on $\mathbb{H}$. We cannot carelessly  let $b$ go to $\infty$ since increasing the right boundary may modify the representation inside whole box. However, it turns out that the limit as $b$ goes to infinity always exists and defines a translation invariant graphical representation on $\mathbb{H}$. More precisely, we devote this section to proving the following result which provides a complete picture of the limit.

\begin{theo}[\textbf{Stationary half plane representation: geometric case}]\label{Theo:HalfPlane}
Assume that $\mu$ is the geometric distribution with parameter $\alpha$ and recall that $\Xi$ denotes a PPP on $\mathbb{H}\times\N$ with intensity $du \otimes dt \otimes \mu$. Then, as $b$ goes to $+\infty$, the graphical representations $\mathcal{G}_b$ on $\mathbb{H}_b$ converge locally, almost surely, to a random graphical representation $\mathcal{G}_\infty$ on $\mathbb{H}$. That is, for a.s. every realization of $\Xi$ and for any closed box $\mathbb{B} \defeq [x,y]\times [s,t] \subset \mathbb{H}$, the restriction of $\mathcal{G}_b$ to $\mathbb{B}$ becomes constant for $b$ large enough. The random variable $\mathcal{G}_\infty$ has the following properties:
\begin{enumerate}
\item The law of $\mathcal{G}_\infty$ is characterized by its trace on finite boxes. For $\mathbb{B} \defeq [x,y]\times [s,t] \subset \mathbb{H}$, the law of $\mathcal{G}_\infty$ restricted to $\mathbb{B}$ is that of the stationary $\mu$-Hammersley process with sources and sinks where:
\begin{itemize}
\item[\textup{(a)}] The sources and sinks processes are independent. 
\item[\textup{(b)}] The sinks form an inhomogeneous PPP on $\{y\}\times [s,t]$ with intensity $\frac{1}{(1-\alpha)\tau}d\tau$.
\item[\textup{(c)}] The sources form a PPP on $[x,y]\times \{s\}$ with intensity $(1-\alpha)s du$ and each source is given an independent geometric number of lives with parameter $\alpha$.
\end{itemize}
\item The law of $\mathcal{G}_\infty$ is invariant by horizontal translation. 
\item $\mathcal{G}_\infty$ defines almost surely a single infinite random tree embedded in $\mathbb{H}$ such that
\begin{itemize}
\item[\textup{(a)}] The set of vertices of the tree is exactly the positions of the atoms of $\Xi$ and the number of children of each vertex is the initial 
number of lives of the atom. 
\item[\textup{(b)}] The tree is rooted at $(-\infty,0)$. It grows upward and to the right \emph{i.e.} if a point $(a',b')$ belongs to the progeny of $(a,b)$, then $a'\geq a$ and $b' \geq b$.
\item[\textup{(c)}] Every (forward) infinite ray admits vertical asymptote \emph{i.e.} converges toward $(x,\infty)$ for some finite $x$.
\end{itemize}
\end{enumerate}
\end{theo}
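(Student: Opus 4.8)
The plan is to separate the \emph{existence} of the local limit from the \emph{identification} of its law, building both on the one-sided stationary results already established. For the convergence claim, I fix a box $\mathbb{B}=[x,y]\times[s,t]\subset\mathbb{H}$ and argue that only finitely many values of $b$ ever change the trace $\mathcal{G}_b|_{\mathbb{B}}$. Since growing the strip downward and leftward is compatible, the only way increasing $b$ can alter $\mathcal{G}_b|_{\mathbb{B}}$ is through the second-class particle mechanism of Section~\ref{sect-augmentationdroite}: inserting an atom with a new largest label launches a single second-class particle travelling up and to the left, and $\mathbb{B}$ is affected only if that trajectory enters $\mathbb{B}$. The relevant atoms lie in the strip $(y,b]\times(0,t)$, and I would show that the probability that an atom at horizontal position $b'$ sends a second-class particle back past position $y$ decays summably in $b'-y$. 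The mechanism behind the decay is that the intensity of alive particles grows linearly in height (Theorem~\ref{Th:SourcesStationnaires} and Corollary~\ref{cor:sourcefromzero}), so a leftward-moving second-class particle is quickly absorbed; comparing with the stationary process supplies the quantitative bound. Borel--Cantelli then gives that a.s. only finitely many atoms to the right of $y$ ever affect $\mathbb{B}$, so $\mathcal{G}_b|_{\mathbb{B}}$ stabilizes and $\mathcal{G}_\infty$ is well defined. I expect this estimate to be the main obstacle, since it is the only place where the non-compatibility of rightward growth must be controlled quantitatively.

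For the identification in part (1) I would read off the two boundary data from the one-sided stationary measures. The bottom trace of $\mathcal{G}_\infty$ at height $s$ restricted to $[x,y]$ is the state of the half-plane $\mu$-Hammersley process started from the empty configuration, which by Corollary~\ref{cor:sourcefromzero} (together with the scaling and left-compatibility remarks following Theorem~\ref{Th:SourcesStationnaires}) is a PPP of intensity $(1-\alpha)s\,du$ carrying i.i.d.\ geometric numbers of lives, giving (1c). The right trace on $\{y\}\times[s,t]$ is exactly the root process seen from the right, which by Corollary~\ref{cor:sinkstationnaireinfinie} is the stationary inhomogeneous PPP of intensity $\tfrac{1}{(1-\alpha)\tau}\,d\tau$, giving (1b). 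The real content is the independence asserted in (1a): because the bottom state in $[x,y]$ and the right root-process both depend on the atoms in the overlapping region $(y,\infty)\times(0,s)$, independence is \emph{not} a matter of disjoint randomness but a Burke-type phenomenon. I would obtain it from the two-directional stationarity, where the process is stationary upward (Theorem~\ref{Th:SourcesStationnaires}) and leftward (Theorem~\ref{Th:RacinesStationnaires}): in the doubly stationary process $H^{\mathcal{C}_\lambda,\mathcal{S}_\lambda}$ the bottom-edge state and the right-edge root configuration of a box are independent, and letting $\lambda\to0$ as in Corollaries~\ref{cor:sourcefromzero} and~\ref{cor:sinkstationnaireinfinie} preserves this. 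Once the two boundary processes and their independence are in hand, the law of $\mathcal{G}_\infty$ inside $\mathbb{B}$ is that of the stationary process with these sources and sinks, and Kolmogorov consistency across nested boxes pins down the law globally.

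Part (2) should then be soft: $\Xi$ is invariant under horizontal translation, $\mathcal{G}_b$ is a deterministic functional of $\Xi$, and the almost-sure local limit of part~(1) commutes with translation (the stabilization time of a translated box being the translate of the original), so $\mathcal{G}_\infty$ inherits horizontal translation invariance.

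Part (3) is the geometric/topological statement. For (3a)--(3b) I would note that the vertices, edges and growth directions are built into the graphical representation, and that in the limit every atom a.s.\ exhausts its lives, so offspring counts equal initial numbers of lives as claimed; the growth up-and-to-the-right is immediate from the construction. The two substantive points are the \emph{single tree} assertion and the vertical asymptote (3c). In the half plane the particle intensity $(1-\alpha)\tau$ is positive at every height, so every atom has an alive particle to its left and hence a genuine parent — there are no finite roots, the finite-$b$ roots being a truncation artifact — and each vertex spawns a backward ancestral line moving down and to the left. Uniqueness of the tree is then the statement that any two such ancestral lines coalesce almost surely, driving the position to $-\infty$ and the height to $0$ and producing the root at $(-\infty,0)$; this coalescence, in the spirit of the coalescing genealogies of Hammersley-type systems, is the most delicate piece of this part. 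For (3c) I would bound a forward ray by summing the parent-to-child horizontal increments, using that the growing intensity forces successive offspring positions to lie in increasingly small gaps, so the increments are a.s.\ summable along the ray and its horizontal coordinate converges to a finite $x$, i.e.\ the ray converges to $(x,\infty)$.
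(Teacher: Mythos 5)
Your overall architecture (one-sided stationary laws plus Kolmogorov consistency, independence as a Burke-type fact, soft translation invariance) matches the paper's, but at each of the genuinely hard points the proposal substitutes an assertion for a proof, and in two places it skips the idea that actually makes the proof work. First, for the a.s.\ stabilization of $\mathcal{G}_b$ on $\mathbb{B}$ you propose a quantitative, summable decay estimate for the probability that an atom at horizontal distance $n$ beyond $y$ sends its second-class particle into $\mathbb{B}$, followed by Borel--Cantelli, and you yourself flag this estimate as the main obstacle. It is indeed missing, and it is also unnecessary: the structural fact you never invoke is \emph{monotonicity}. By the second-class particle analysis of Section \ref{sect-augmentationdroite}, growing the box to the right can only \emph{add} horizontal lines crossing $\{y\}\times[s,t]$ (a new root appears iff the particle exits on the left, and existing roots are then undisturbed), so $\bar{\mathcal{R}}_{[s,t]}(b)$ is non-decreasing in $b$. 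Combined with the coupling $\mathcal{R}(b,t)\subset\mathcal{R}^{\emptyset,\mathcal{S}_0}(b,t)$ and Corollary \ref{cor:sinkstationnaireinfinie}, its expectation is bounded uniformly in $b$ by $\tfrac{1}{1-\alpha}\log(t/s)$, and a non-decreasing integer-valued process with bounded expectation is a.s.\ eventually constant; no rate is needed. Second, and more seriously, your identification of the sink law (1b) conflates stationarity with convergence to stationarity: Corollary \ref{cor:sinkstationnaireinfinie} says $\mathcal{S}_0$ is \emph{invariant} for the root process, but $\bar{\mathcal{R}}_{[s,t]}(\infty)$ is the limit of the root process started from the \emph{empty} configuration, and invariance alone does not identify that limit. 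The domination above gives only the upper bound. The matching lower bound in the paper is a separate coupling argument absent from your plan: add sources $\mathcal{C}_\lambda$ and sinks $\mathcal{S}_\lambda$, show by conditional Borel--Cantelli that the stationary process $\mathcal{R}^{\mathcal{C}_\lambda,\mathcal{S}_\lambda}(\cdot,t)$ hits $\emptyset$ a.s., note that from that time on it coincides with $\mathcal{R}^{\mathcal{C}_\lambda,\emptyset}(\cdot,t)\subset\mathcal{R}(\cdot,t)$, so the limit law dominates $\mathcal{S}_\lambda$, and let $\lambda\to 0$. Without this step neither (1b) nor (1c) is proved; the paper reads off the source law from Corollary \ref{cor:sourcefromzero} only \emph{after} the sink law on the right boundary of the quarter-plane has been pinned down, and your independence claim (1a) is likewise asserted rather than derived from the Poisson structure of the sink process as in the paper's restriction argument.

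The tree statements have the same character. For the single-tree claim you correctly reduce to coalescence of ancestral lines and call it the most delicate piece, but give no argument; the paper's proof is short and should be supplied: for two roots $r<r'$, let $N(b)$ be the number of roots lying between their respective ancestors $r(b)\le r'(b)$ in $\mathcal{R}^{\emptyset,\mathcal{S}_0}(b,t)$; then $N$ is non-increasing, and as long as $N(b)>1$ it strictly decreases each time an atom of $\Xi$ with at least two lives falls between $r(b)$ and the root immediately below it, which happens infinitely often a.s.\ since $\Xi$ has unit intensity. Finally, for (3c) your summable-increments heuristic is the right mechanism, but making it rigorous is exactly the content of Propositions \ref{Prop:StationaritePalmGeometrique} and \ref{Prop:LimiteXt} (the environment seen from the particle, giving $\lim_{t\to\infty}\E[X(t)-X(0)]=\alpha/(\lambda(1-\alpha))<\infty$), which your proposal never invokes; the theorem itself defers (3c) to Proposition \ref{Prop:LimiteXt} precisely because controlling the gaps along a ray requires this Palm-type stationarity rather than a pointwise estimate.
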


\begin{figure}
\includegraphics[width=16cm]{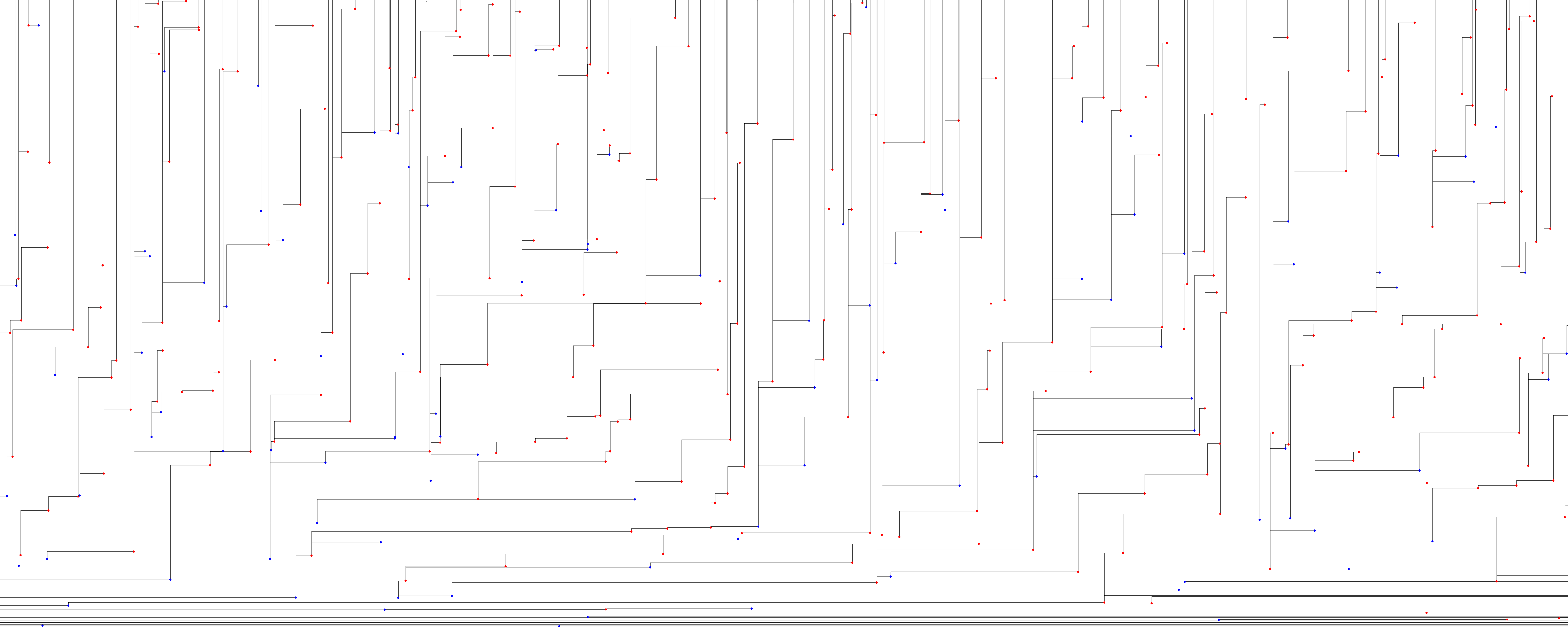}
\caption{\label{Fig:simHalfPlane} Simulation of the half-plane stationary representation $\mathcal{G}_\infty$ with geometric distribution with parameter $\alpha = 2/3$. The window displayed is $[0,30]\times (0,12]$. The vertical lines exiting the top boundary form a PPP with intensity $4du$ while the horizontal line exiting the left/right sides form PPPs with intensity $\frac{3dt}{t}$.}
\end{figure}

\begin{proof}
\textbf{Convergence and characterization of $\mathcal{G}_\infty$}. 
Let us first explain why a random variable with law $\mathcal{G}_\infty$ exists. Let $\mathbb{B} \defeq [x,y]\times [s,t] \subset \mathbb{H}$ and denote by  $\mathcal{G}_{\mathbb{B}}$ the stationary graphical representation in $\mathbb{B}$ where sources and sinks are added according to $1.$ (a) - (c). Let $\mathbb{B}' \defeq [x',y']\times [s',t'] \subset \mathbb{B}$ be a sub-box of $\mathbb{B}$. We claim that the restriction of  $\mathcal{G}_{\mathbb{B}}$ to $\mathbb{B}'$ is, again, distributed as the stationary representation with sources and sinks given by $1.$ (a) - (c). This is easier to explain with the help of Figure \ref{Fig:explicationKolmo}. According to Theorem \ref{Th:RacinesStationnaires}, the sinks process along the segment $\{y'\}\times [s,t]$ is again an inhomogeneous PPP with intensity $\frac{1}{(1-\alpha)\tau}d\tau$. Since it is Poisson, its restrictions on the segments $A$, $B$ and $C$ are independent. On the other hand, using Theorem \ref{Th:SourcesStationnaires} for the Hammersley process inside the box $[x,y']\times [s,s']$, we find that the sources process of $\mathbb{B}'$ on the segments $D$ (and also $E$) is, as required, an homogeneous PPP with intensity $(1-\alpha)s'du$ which depends on the sinks process on $A$ but is independent of the sinks process on $B$.  

This compatibility relation between boxes  together with Kolmogorov's extension Theorem imply that there exists a random variable $\mathcal{G}_\infty$ on $\mathbb{H}$ whose restriction has the required properties. However, this 
abstract construction does not tell us if there is a deterministic algorithm to construct  $\mathcal{G}_\infty$ once the set of atoms of $\Xi$ is fixed. The theorem above states that this can be achieved by taking the limit of the graphical constructions (without source nor sink) inside boxes of increasing size. 

\begin{figure}
\begin{center}
\includegraphics[height=6cm]{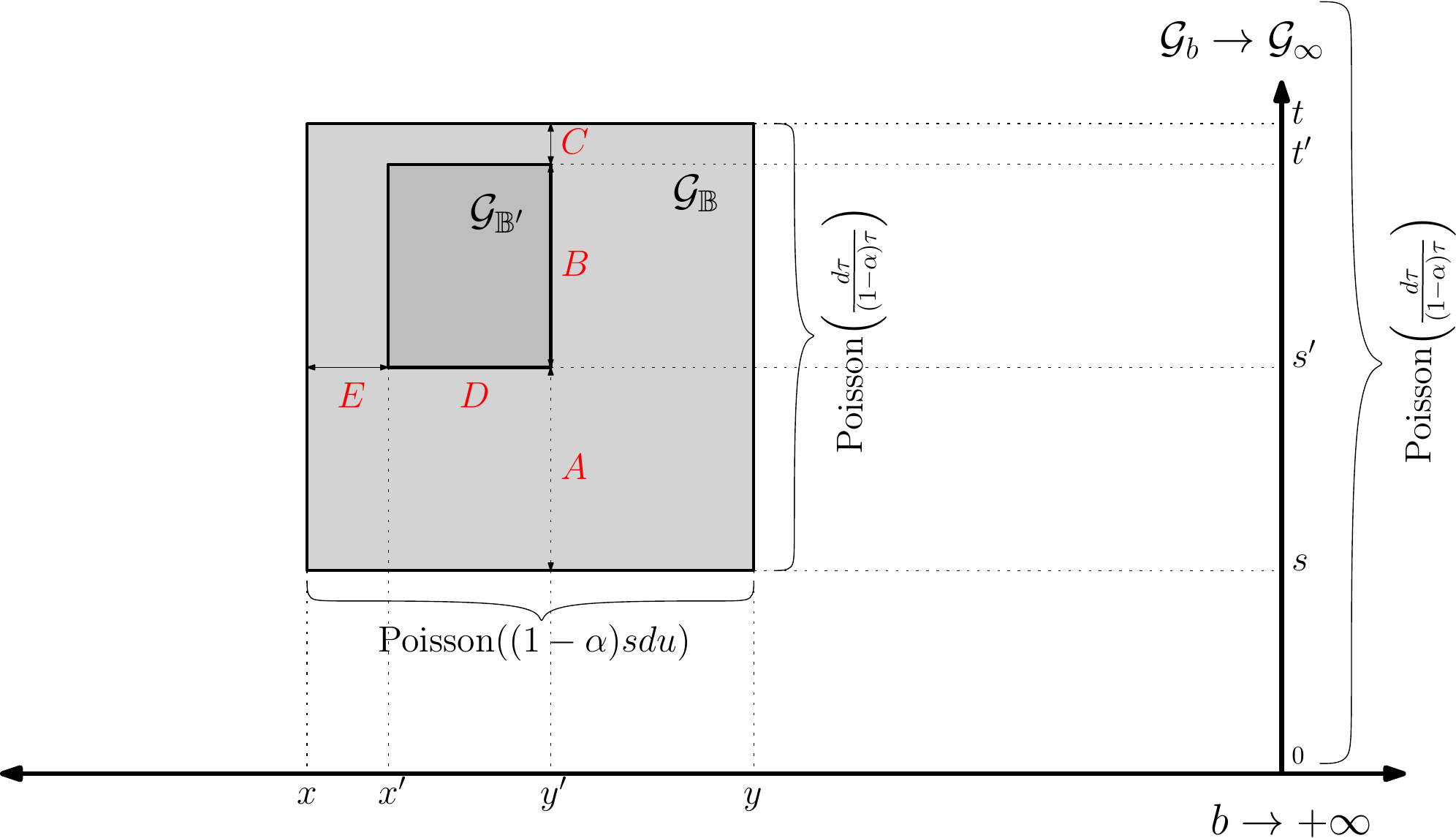}
\end{center}
\caption{\label{Fig:explicationKolmo}Graphical explanation of the restriction property.}
\end{figure}

In order to prove that the restriction of  $\mathcal{G}_b$ to a finite box $\mathbb{B}\defeq [x,y]\times [s,t] \subset \mathbb{H}$ becomes constant for large $b$, it suffices to show that, if we fix a realization of $\Xi$, then
the set of horizontal lines in $\mathcal{G}_b$ which intersect the right boundary $\{y\}\times[s,t]$ of $\mathbb{B}$ stays constant for $b$ large enough.

$\Xi$ being fixed, we can assume without loss of generality that $s>0$ is small enough so that there is no atom of $\Xi$ inside $[x,y]\times (0,s]$. This implies that there can never be a vertical line that intersects the lower boundary $[x,y]\times \{s\}$ of $\mathbb{B}$ in any graphical representation $\mathcal{G}_b$. In other words, the restriction of $\mathcal{G}_b$ to $\mathbb{B}$ has no source. Thus, the sinks on the right boundary $\{y\}\times[s,t]$ together with the atoms of $\Xi$ inside $\mathbb{B}$ uniquely determine the representation inside the box. In particular, if the sinks process becomes constant for large $b$, so does the whole representation inside $\mathbb{B}$.

Thus, we will show that, for any fixed $0<s<t$, the set of heights of the horizontal lines intersecting the vertical segment $\{0\}\times [s,t]$ in $\mathcal{G}_b$ becomes constant for large $b$ and follows the law of an inhomogeneous PPP with intensity $\frac{1}{(1-\alpha)\tau}d\tau$. We denote by $\bar{\mathcal{R}}_{[s,t]}(b)$ this set of heights and by extension $\bar{\mathcal{R}}_{(0,t]}(b)$ the set of  heights of the horizontal lines crossing the vertical segment $\{0\}\times (0,t]$ in $\mathcal{G}_b$. Of course, we have  $$\bar{\mathcal{R}}_{[s,t]}(b) =\bar{\mathcal{R}}_{(0,t]}(b)\cap [s,t].$$

Let us first point out that the $\bar{\mathcal{R}}_{(0,t]}(b)$ is non decreasing in $b$ for the partial inclusion order. Indeed, as explained in Section \ref{sect-augmentationdroite}, growing the box to the right can add new roots (this corresponds to the case of the second class particle exiting by the left border of the box) but cannot remove existing ones. Hence, this process converges almost surely to some possibly infinite limit $\bar{\mathcal{R}}_{(0,t]}(\infty)$. We need to prove that the number of roots above any level $s>0$ remains bounded and characterizes the law of the limit. 

For any fixed $b$, the set of roots $\bar{\mathcal{R}}_{(0,t]}(b)$ has the same law as the root process without source nor sink $\mathcal{R}(b,t)$:
\begin{equation}\label{Eq:majstationnaire}
\bar{\mathcal{R}}_{(0,t]}(b)\overset{\mbox{\tiny{law}}}{=}\mathcal{R}(b,t),
\end{equation}
($\bar{\mathcal{R}}_{(0,t]}(\cdot)$ corresponds to growing from the right whereas $\mathcal{R}(\cdot,t)$ corresponds to growing from the left).
Besides, it is clear that the dynamic of $\mathcal{R}(\cdot,t)$ is monotone with respect to its starting configuration: if we start the process from two initial configurations, one of them containing the other, then at all times, the set of roots maintains this inclusion property. Thus, if we construct simultaneously the root process $\mathcal{R}(\cdot,t) $ starting from no particle and the root process $\mathcal{R}^{\emptyset,\mathcal{S}}(\cdot,t) $ starting from an inhomogeneous PPP $\mathcal{S}_0$ on $(0,t)$ with intensity $\frac{1}{(1-\alpha)\tau}d\tau$ and we use the same PPP $\Xi$, we get, for any  $b\ge 0$, 
$$\mathcal{R}^{}(b,t) \subset\mathcal{R}^{\emptyset,\mathcal{S}_0}(b,t). $$ 
Using the stationarity of $\mathcal{R}^{\emptyset,\mathcal{S}}(\cdot,t)$ stated in Corollary \ref{cor:sinkstationnaireinfinie}, we find that, for any $b\ge 0$, the law of  $\mathcal{R}(b,t)$ is stochastically dominated by $\mathcal{S}$. According to \eqref{Eq:majstationnaire}, the same also holds true for  $\bar{\mathcal{R}}_{(0,t]}(b)$ for each $b$. Taking the increasing limit as $b$ tends to infinity, we conclude that the graphical representation inside finite boxes fixates for $b$ large and we also obtain the desired upper bound on the limit distribution of the sinks. 

We use a truncation argument to prove the matching lower bound. Let $\mathcal{S}_\lambda$ be a PPP on $(0,t]$ with intensity $\frac{1}{\lambda+(1-\alpha)\tau}d\tau$. Let $\mathcal{C}_\lambda$ be a sources process distributed according to a PPP with intensity $\lambda du$ where each source is given a geometric number of lives of parameter $\alpha$. We construct the two root processes $\mathcal{R}^{\mathcal{C}_\lambda,\emptyset}(\cdot,t)$ and $\mathcal{R}^{\mathcal{C}_\lambda,\mathcal{S}_\lambda}(\cdot,t)$ on the same space using the atoms of $\Xi$ and $\mathcal{C}_\lambda$. We will need the following facts:
\begin{itemize}
\item Adding sources can remove roots but cannot create them. Thus, for any $b\ge 0$, 
\begin{equation}\label{Eq:majavecsource}
\mathcal{R}^{\mathcal{C}_\lambda,\emptyset}(b,t) \subset \mathcal{R}(b,t). 
\end{equation}
\item Adding sources does not affect the monotonicity property of the root process with respect to the initial configuration (sources are equivalent to atoms of $\Xi$ with height $0$). Thus, for any $b\ge 0$, we have 
$$\mathcal{R}^{\mathcal{C}_\lambda,\emptyset}(b,t) \subset \mathcal{R}^{\mathcal{C}_\lambda, \mathcal{S}_\lambda}(b,t). $$ 
\end{itemize}
The processes $\mathcal{C}_\lambda$ and $\mathcal{S}_\lambda$ are such that $\mathcal{R}^{\mathcal{C}_\lambda, \mathcal{S}_\lambda}(\cdot,t)$ is stationary. Since the intensity of $S_\lambda$ is bounded, it follows that $\liminf_{b\to\infty}|\mathcal{R}^{\mathcal{C}_\lambda, \mathcal{S}_\lambda}(b,t)| < \infty$ a.s. Now, for any $K$, the probability for $\mathcal{R}$ to go from $K$ roots to $0$ root in a unit of time is bounded below by a constant depending only on $K$ (this happens whenever there are at least $K$ sources in $\mathcal{C}_\lambda$ but no atom of $\Xi$ during a unit time period). Thus, an easy application of the conditional Borel-Cantelli Lemma shows that $\mathcal{R}^{\mathcal{C}_\lambda, \mathcal{S}_\lambda}(\cdot,t)$ reaches $\emptyset$ a s. This means that there exists a.s. some random $b_0$ such that 
$$\mathcal{R}^{\mathcal{C}_\lambda,\emptyset}(b_0,t)= \mathcal{R}^{\mathcal{C}_\lambda, \mathcal{S}_\lambda}(b_0,t).$$ 
which in turn implies that
$$\forall b>b_0, \quad\mathcal{R}^{\mathcal{C}_\lambda,\emptyset}(b,t)= \mathcal{R}^{\mathcal{C}_\lambda, \mathcal{S}_\lambda}(b,t), $$ 
proving that the law of  $ \mathcal{R}^{\mathcal{C}_\lambda,\emptyset}(b,t)$ converges, as $b$ tends to infinity, to  that of $\mathcal{S}_\lambda$.
Combining this with \eqref{Eq:majstationnaire} and \eqref{Eq:majavecsource}, we find that, for any $\lambda>0$, the law of $\bar{\mathcal{R}}_{(0,t]}(\infty)$ stochastically dominates  $\mathcal{S}_\lambda$. Letting $\lambda$ tend to $0$, we conclude that the law of the sinks on the right boundary of any finite box are given by $1.$ (b). 

Thus, we have proved that the trace of $\mathcal{G}_\infty$ on any quarter-plane $\mathbb{H}_b$ has the same law as the graphical representation in $\mathbb{H}_b$ obtained by putting the sinks sequence $\mathcal{S}_0$ on $\{b\}\times(0,\infty)$. From this point on, it follows directly from Corollary \ref{cor:sourcefromzero} that the sources on the bottom side of any box are distributed according to $1.$ (c). Finally, the fact that sinks and sources processes on any box are independent is shown with the exact same arguments used to prove that the sources and sinks processes of a box remain independent after   restriction \emph{c.f.} Figure \ref{Fig:explicationKolmo}.

\medskip

\textbf{Properties of $\mathcal{G}_\infty$.}
The invariance of $\mathcal{G}_\infty$ by horizontal translation follows directly from the invariance of the PPP $\Xi$ used to construct it. We prove that $\mathcal{G}_\infty$ is connected. This result is equivalent to the following statement concerning the root process $\mathcal{R}^{\emptyset,\mathcal{S}_0}(\cdot,t)$ without source and with stationary sinks distribution $\mathcal{S}_0$: 
\begin{itemize}
\item[(A)]
For two roots $r,r' \in \mathcal{S}_0$, there exists $b_1>0$ such that $r,r'$ have the same common ancestor in $\mathcal{R}^{\emptyset,\mathcal{S}_0}(b_1,t)$.
\end{itemize}
To see why (A) must be true, call $r(b)$ and $r'(b)$ the respective ancestor of $r$ and $r'$ in $\mathcal{R}^{\emptyset,\mathcal{S}_0}(b,t)$. We assume that $r < r'$ so that $r(b) \leq r'(b)$ for all $b$. Let $N(b)$ denote the number of roots located on the segment $[r(b),r'(b)]$. 
We observe that $N$ is non-increasing in $b$ since each atom of $\Xi$ encountered removes, at least one root above it. Moreover, if $N$ is strictly larger than $1$, then it decreases strictly every time $\mathcal{R}^{\emptyset,\mathcal{S}}$ encounters an atom of $\Xi$ with at least two lives located between $r(b)$ and the root just below it. Since $\Xi$ is a unit PPP, this happens infinitely often a.s. hence (A) holds and $\mathcal{G}_\infty$ is a single tree almost surely. By construction, this tree grows upward and to the right and satisfies $3.$ (a). Since it contains all the point of $\Xi$, it must necessarily be rooted at $(-\infty,0)$. 

The last statement $3.$ (c) requires additional work and is proved in Proposition \ref{Prop:LimiteXt}. Yet, we stated it in the theorem nevertheless in order to gather all the properties of $\mathcal{G}_\infty$ together.
\end{proof}

\begin{figure}
\begin{subfigure}{.5\textwidth}
  \centering
 \includegraphics[width=.8\linewidth]{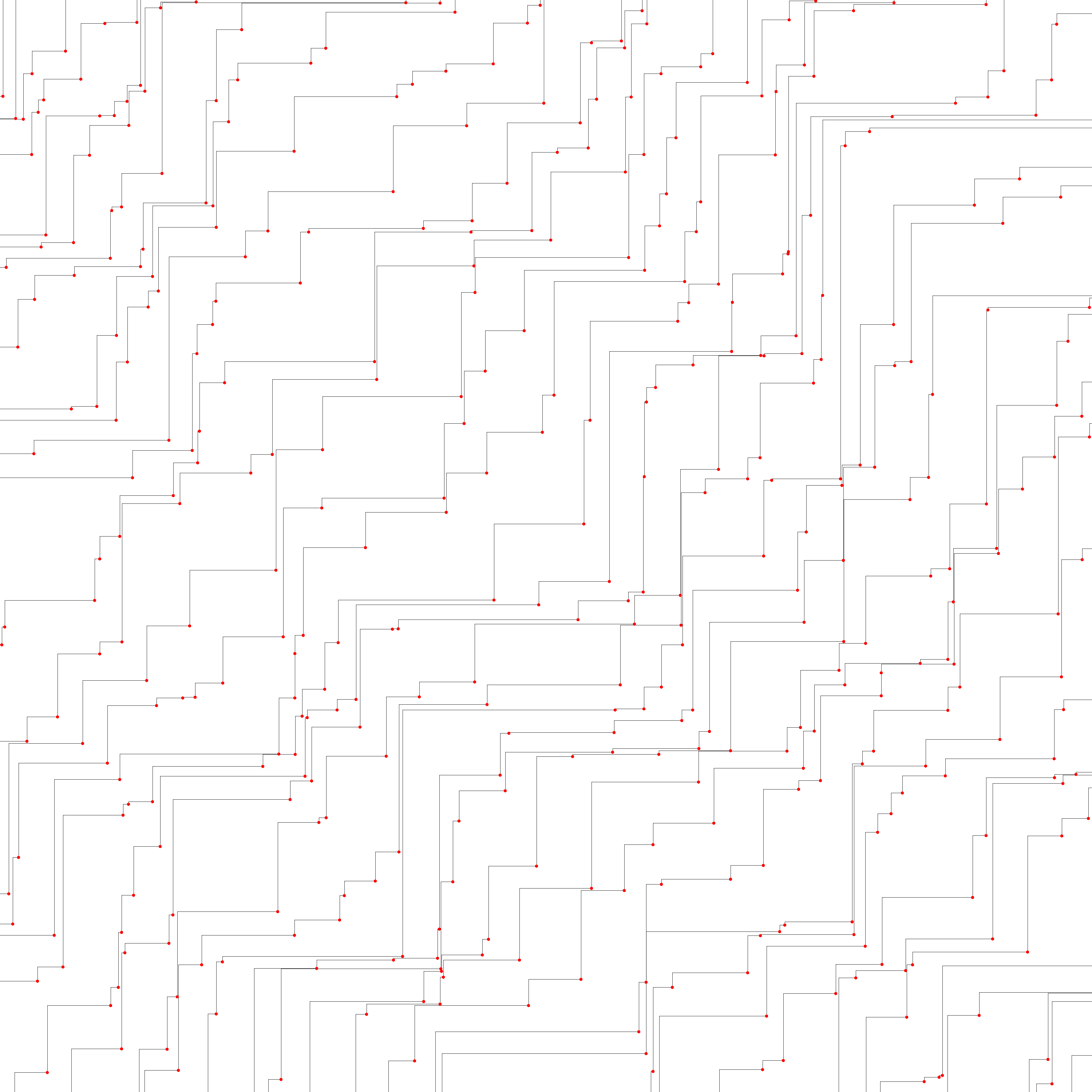}
  \caption{$\mbox{sources}\sim \mbox{sinks} \sim \mbox{Poisson}(1du)$}
\end{subfigure}%
\begin{subfigure}{.5\textwidth}
  \centering
\includegraphics[width=.8\linewidth]{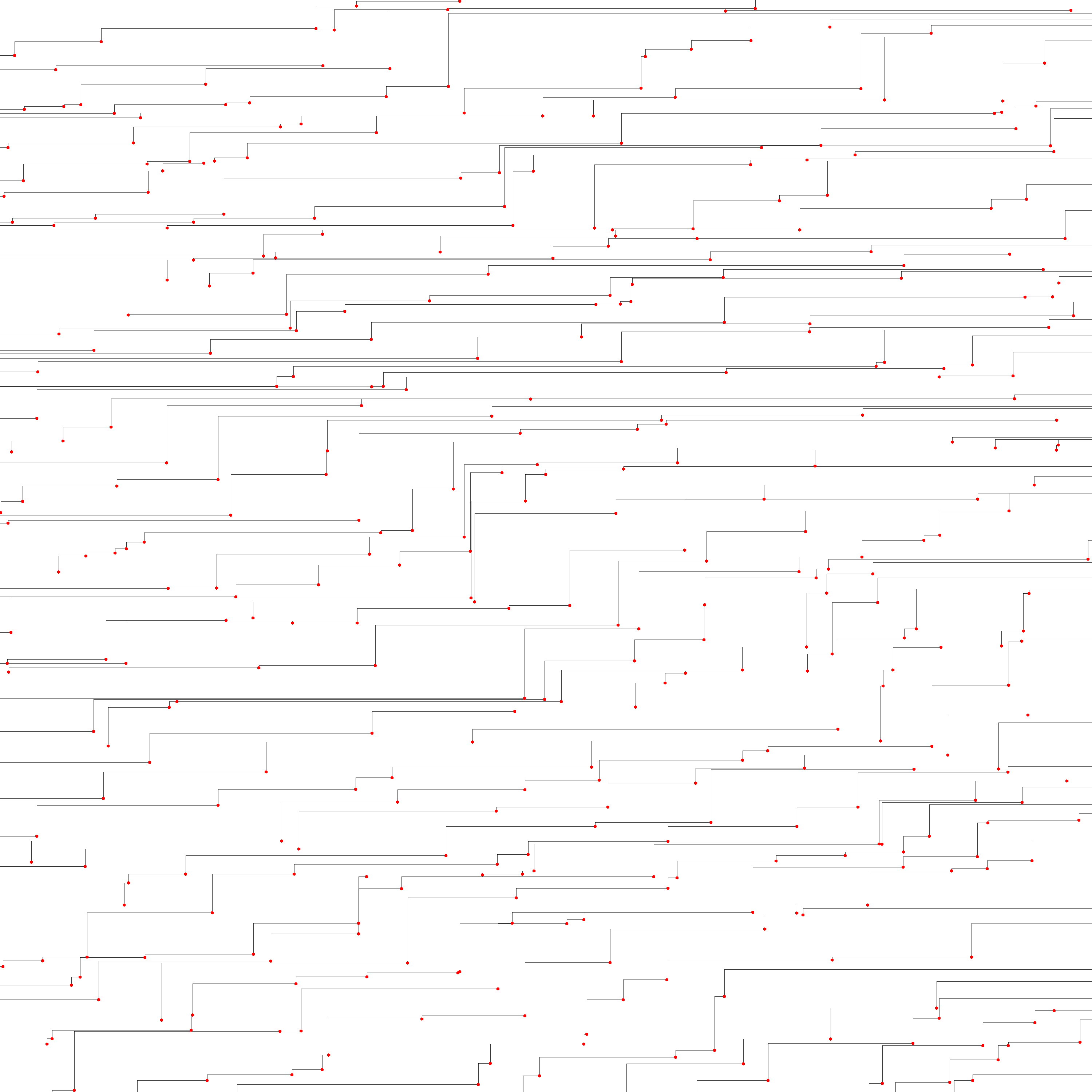}
  \caption{$\mbox{sources}\sim\mbox{Poisson}(\frac{du}{3})$, $\mbox{sinks}\sim\mbox{Poisson(3dt)}$}
\end{subfigure}
\caption{\label{Fig:simHammersleyLine}Hammersley's line process inside the box $[0,20]\times[0,20]$ for two different stationary measures.}
\end{figure}

\begin{rem}\label{Rem:HammersleyLines}
\begin{itemize}
\item In the case of the usual Hammersley's line process, it is also possible to construct an infinite stationary graphical representation. However, this need to be done over the whole plane $\mathbb{R}$ instead of the half plane $\mathbb{H}$ (looking at the representation in $\mathbb{H}_b$  and letting $b\to\infty$ yields a degenerated limit). Moreover, the limit is not unique because it depends on the way the sequence of boxes grows to fill the whole plane. More precisely, if we consider quarter planes of the form $[-x,x]\times[-y,y]$, the limit law will depend on $\lambda \defeq \lim y/x$. This parameter gives the ``angle'' of the Hammersley lines: the sources and sinks processes on the side of any box are independent homogeneous PPP with respective intensity $\lambda du$ and $\frac{dt}{\lambda}$. See Figure \ref{Fig:simHammersleyLine}. In the case of the geometric $\mu$-Hammersley process, there is no such latitude so the limiting law $\mathcal{G}_\infty$ is unique. 

\item The representation $\mathcal{G}_\infty$ defines the stationary $\mu$-Hammersley process on $\R$ starting from no particle at time $0$. However, we can also consider the process starting from any homogeneous PPP with intensity $\lambda du$ by simply looking at the process after time $\lambda/(1-\alpha)$. In other words, we can construct it from the graphical representation restricted to the half-plane $(-\infty,+\infty)\times[\frac{\lambda}{1-\alpha},+\infty)$. 
\end{itemize}
\end{rem}

\subsection{The point of view of the particle.}

\subsubsection{The case of Hammersley's line process}\label{Sec:PalmClassique}

In this subsection only, we focus on the classical Hammersley process with lines instead of trees. In our setting, this corresponds to the degenerated case $\mu = \delta_1$. As we already mentioned in Remark \ref{Rem:HammersleyLines}, it is possible to define a stationary version of the process on the whole line $\mathbb{R}$ starting from any homogeneous PPP with intensity $\lambda>0$ \emph{c.f.} \cite{AldousDiaconis95}. This means that, at each time, the distribution of the process is still a Poisson process with intensity $\lambda$. We will 
prove a similar result for its Palm measure. Even though this result is probably already known, we could not find a reference so we include here a proof for completeness.  We denote by $(X_i(t),\; i\in \mathbb{Z})$ the position of the particles at time $t$ with the convention that
\begin{itemize}
\item The initial indexing is such that $\ldots < X_0(0) < 0 < X_1(0) < X_2(0) < \ldots$.
\item Each index $i$ corresponds to an Hammersley line \emph{i.e} each time a particle appears, it take the index of the one it killed.
\end{itemize}
For each $i$, define $Z_i(t)  = (z^1_i(t),z^2_i(t),\ldots)$ to be the sequence of gaps between particles on the right of the $i$th Hammersley line at time $t$:
$$
\forall j\ge 1, \quad z^{j}_i(t)=X_{i+j}(t)-X_{i+j-1}(t).
$$

\begin{prop}[\textbf{Environment seen from a particle: Hammersley's line process}]\label{Prop:StationaritePalm}
For every $t>0$ and $i\geq 1$, the sequence $Z_i(t)$ is i.i.d. with exponential law of parameter $\lambda$.
\end{prop}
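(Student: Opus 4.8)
The plan is to forget the absolute location of the tagged line, keep only the gaps to its right, and show that this yields an \emph{autonomous} Markov process for which the product of exponential laws is invariant. Fix $i\geq 1$ and abbreviate $Y_0(t)\defeq X_i(t)$, $Y_j(t)\defeq X_{i+j}(t)$ for $j\geq 1$, so that $z^j_i=Y_j-Y_{j-1}$. The first step is to record how a space-time atom acts on $Z_i$. An atom arriving at $(u,s)$ with $u<Y_0(s^-)$ merely rearranges particles to the left of the tagged line and leaves $Y_0$ and every $z^j_i$ untouched; an atom with $u\in(Y_{j-1}(s^-),Y_j(s^-))$ kills $Y_{j-1}$ and reinserts it at $u$, which for $j=1$ replaces $z^1_i$ by $Y_1-u$ (uniform on $(0,z^1_i)$) and for $j\geq 2$ transfers length from $z^j_i$ to $z^{j-1}_i$ while preserving their sum. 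Thus the evolution of $Z_i$ never sees the configuration to the left of the tagged line, and $Y_0$ enters only through the gaps. Moreover, given the current gaps, the atoms driving $Z_i$ after time $t$ are those in $\{(u,s):s>t,\ u>Y_0(s)\}$, which by the independence of increments of the PPP are independent of the past; hence $(Z_i(t))_{t\geq 0}$ is a Markov process whose generator $\mathcal{A}$ acts on cylinder functions $f$ by
\begin{align*}
\mathcal{A}f(z) = {} & \int_0^{z^1} \big[\, f(w,z^2,z^3,\ldots) - f(z) \,\big]\, dw \\
& + \sum_{j\geq 2} \int_0^{z^j} \big[\, f(\ldots, z^{j-1}+z^j-w,\, w,\, z^{j+1},\ldots) - f(z) \,\big]\, dw .
\end{align*}

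The base case is immediate. At $t=0$ the particles form a PPP of intensity $\lambda$, and since the convention fixes $X_i(0)$ as the $i$-th atom to the right of the origin, the memoryless property of the PPP makes $Z_i(0)$ a sequence of i.i.d.\ $\mathrm{Exp}(\lambda)$ variables, independent of $X_i(0)$ and of everything on the left. Consequently, once we know that the product measure $\pi_\lambda\defeq\bigotimes_{j\geq 1}\mathrm{Exp}(\lambda)$ is invariant for $\mathcal{A}$, the identity $Z_i(t)\sim\pi_\lambda$ for every $t>0$ follows by propagating this stationary initial law through the autonomous Markov semigroup (working with cylinder functions of the first $n$ gaps, whose evolution only reaches the first $n+1$ gaps, so no well-posedness issue arises). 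This is exactly the assertion of the proposition, so the whole matter reduces to the invariance of $\pi_\lambda$.

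To prove $\int \mathcal{A}f\, d\pi_\lambda=0$ for a dense class of cylinder $f$, I would run a direct computation in the spirit of the proof of Theorem \ref{Th:RacinesStationnaires}, testing against exponential functionals $f(z)=\exp(-\sum_j\theta_j z^j)$: the first integral contributes the flux generated by the tagged line advancing into its first gap, each summand contributes the flux of an ordinary particle advancing into the gap above it, and under $\pi_\lambda$ these terms telescope and cancel after integrating the uniform splitting variable $w$ against the product-exponential density. A less computational alternative is to read off the invariance from the degenerate case $\alpha=1$ of Theorem \ref{Th:SourcesStationnaires} (the usual line process with sources and sinks highlighted in the remark that follows it): the half-line process with PPP$(\lambda)$ sources and the matching sink intensity keeps PPP$(\lambda)$ marginals, which — combined with the autonomy established above — pins down $\pi_\lambda$ as the law of the right-environment.

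The hard part will be this invariance step. The difficulty is conceptual rather than algebraic: because the tagged line $Y_0(t)$ itself drifts to the right, one must verify that it genuinely acts as a free moving boundary, i.e.\ that the first-gap transition is the correct boundary dynamics and meshes with the bulk transitions so that the full product-exponential law — and not a perturbed boundary marginal — is preserved. The secondary point requiring care is the justification that atoms lying to the right of the tagged trajectory are a PPP independent of the left, which underlies the autonomy of $Z_i$; this follows from the spatial Markov (stopping-line) property of the space-time Poisson process, the boundary $s\mapsto Y_0(s)$ being adapted to the atoms weakly below and to its right.
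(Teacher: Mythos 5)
Your proposal takes a genuinely different route from the paper. The paper's proof is soft: it fixes $t$, invokes the stationarity of Hammersley's line process on $\R$ (so the time-$t$ configuration is a PPP$(\lambda)$, whose gaps $(\tau_k)_{k\ge 1}$ to the right of the origin are i.i.d.\ exponential), writes $Z_i=(\tau_{I+i},\tau_{I+i+1},\dots)$ where $I$ is the random number of lines that have crossed the origin before time $t$, and then kills the correlation between the random index $I$ and the gaps by a decoupling argument: $Z_i\overset{d}{=}Z_1$ for every $i$, and $\P\{I=k\mid (\tau_{k+j+i})_{j\ge 1}\in A\}\to\P\{I=k\}$ as $i\to\infty$ by backward martingale convergence and triviality of the tail $\sigma$-algebra of $(\tau_j)$. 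You instead prove stationarity of the environment process directly. Your description of the dynamics and your generator are correct, and the invariance computation you outline does close: with $\lambda=1$, testing $f=\exp(-\sum_{j\le n}\theta_j z^j)$ against the product exponential law and dividing by $\E_{\pi_\lambda}[f]$, the first-gap term contributes $\theta_1/(1+\theta_1)$, the terms $2\le j\le n$ telescope to $1/(1+\theta_1)-1/(1+\theta_n)$, and the term $j=n+1$ (which does affect $f$, through $z^n$) contributes $1/(1+\theta_n)-1$, so the total vanishes. In this sense your strategy is viable, self-contained, and in the same computational spirit as the paper's proofs of Theorems \ref{Th:SourcesStationnaires} and \ref{Th:RacinesStationnaires}; your autonomy/Markov step is also essentially sound (it needs only the temporal independence of the PPP plus spatial translation invariance, not a genuine spatial stopping-line property).

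There are, however, two genuine gaps. First, your well-posedness dismissal is wrong: the evolution of the first $n$ gaps does \emph{not} close at level $n+1$, because gap $n+1$ is in turn driven by gap $n+2$, and so on, so the hierarchy never terminates. Passing from infinitesimal invariance $\int\mathcal{A}f\,d\pi_\lambda=0$ to the statement $Z_i(t)\sim\pi_\lambda$ for the actual process therefore requires a real argument — truncation to finitely many gaps together with a finite-speed-of-influence estimate (note the jump rates are unbounded, being proportional to the gap lengths), or uniqueness for the associated martingale problem. This is exactly the technical layer that the paper's soft argument avoids entirely. Second, your ``less computational alternative'' is circular: knowing that the unconditional law of the particle configuration is a PPP$(\lambda)$ at every time (the $\alpha=1$ case of Theorem \ref{Th:SourcesStationnaires}) does not identify the law of the gaps seen from the tagged line, because the tagged line's position at time $t$ is a random functional correlated with the configuration. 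The absence of precisely this Palm bias is the content of the proposition; it cannot be assumed, and handling it is what the paper's decoupling with the index $I$ is designed to do. So your main computational route can be completed once the well-posedness step is genuinely argued, but the proposed shortcut cannot replace it.
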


\begin{proof} We assume without loss of generality that $\lambda=1$. We fix some $t\geq 0$ throughout the proof and we write $Z_i=(z^{1}_i,z^{2}_i,\ldots)$ instead of $Z_i(t)=(z^{1}_i(t),z^{2}_i(t),\ldots)$ to shorten the notation. We first note that for all $i,j\ge 1$, sequences $Z_i$ and $Z_j$ have the same distribution. Hence, we just need to prove the proposition for $Z_1$. Moreover, noticing that $z^1_j = z^j_1$, it also follows that $z^1_1,z^2_1,z^3_1,\dots$ have the same distribution. It remains to prove that they are independent with exponential distribution of parameter $1$.
Let $(\tau_1,\tau_2,\ldots)$ denote the successive differences between particles located on $[0,\infty)$ at time $t$. By stationarity of the Hammersley process, $(\tau_1,\tau_2,\ldots)$ is a sequence of i.i.d. exponential random variables with mean $1$. Besides, we have 
$$
Z_i=(\tau_{I+i},\tau_{I+i+1}, \ldots).
$$ 
where $I$ counts the number of particles that jumped over the origin up to time $t$. 
\begin{figure}
\begin{center}
\includegraphics[height=4cm]{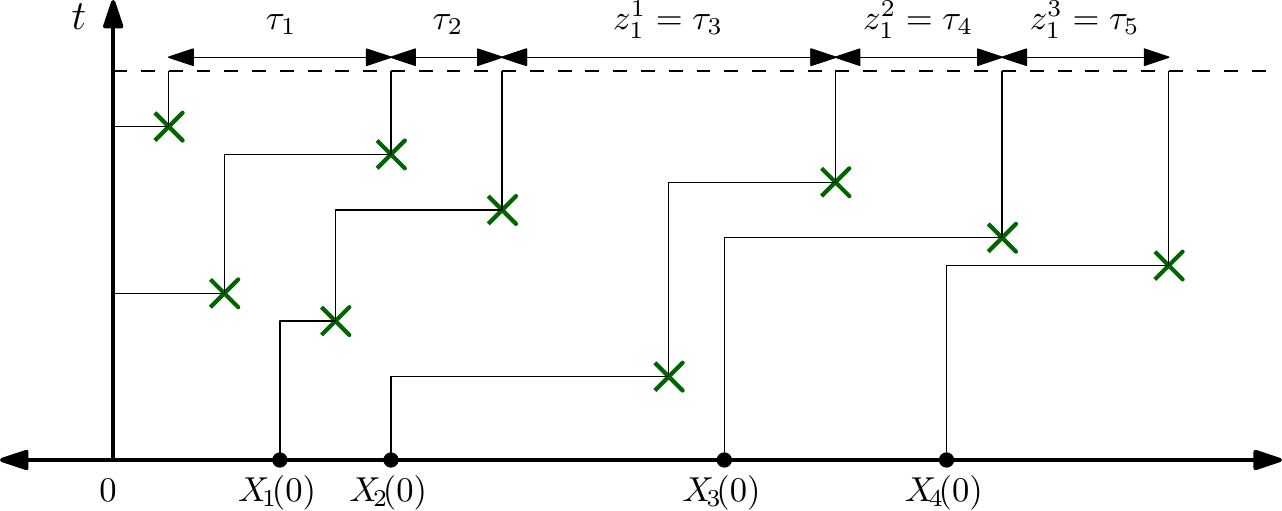}
\caption{Notations for the environment seen from a particle. Here, we have $I=2$.
}
\end{center}
\end{figure}
For any measurable event $A$ such that $\P\{(\tau_{j})_{j\ge 1} \in A\} > 0$  we have
\begin{equation}\label{eq:quelquechose}
\P\{(\tau_{I+j})_{j\ge 1} \in A\} = \P\{ Z_1\in A \} = \P\{ Z_i\in A \} = \P\{(\tau_{I+j+i})_{j\ge 1} \in A \}.
\end{equation}
Moreover
\begin{eqnarray*}
\P\{I=k, (\tau_{I+j+i})_{j\ge 1} \in A\} &=&\P\{I=k, (\tau_{k+j+i})_{j\ge 1} \in A\}\\
&=& \P\{ (\tau_{j})_{j\ge 1} \in A\}\P\{ I=k | (\tau_{k+j+i})_{j\ge 1} \in A\}.
\end{eqnarray*}
The tail $\sigma$-algebra  of the sequence $(\tau_j)$ being trivial, it follows from the backward martingale convergence theorem that
$$
\lim_{i\to \infty} \P\{I=k | (\tau_{k+j+i})_{j\ge 1} \in A\} = \P\{I=k\}.
$$
Thus, we obtain
$$\lim_{i\to \infty} \P\{I=k, (\tau_{I+j+i})_{j\ge 1} \in A\} = \P\{I=k\} \P\{(\tau_{j})_{j\ge 1} \in A\}.$$
Now, setting $f_k(i)\defeq\P\{I\le k, (\tau_{I+j+i})_{j\ge 1} \in A\}$, the previous equality gives
\begin{equation}\label{eq:limiteEnI}
\lim_{i\to \infty} f_k(i)= \P\{I\le k\}\P\{(\tau_{j})_{j\ge 1} \in A\}.
\end{equation}
Obviously, $k\mapsto f_k(i)$ is non-decreasing with
$$\lim_{k\to \infty} f_k(i)=\P\{ (\tau_{I+j+i})_{j\ge 1} \in A \} =\P\{ Z_1\in A\},$$
where we used \eqref{eq:quelquechose} for the last equality. Thus for all $i,k$,we have $f_k(i)\le \P\{Z_1\in A\}$, which, in view of \eqref{eq:limiteEnI}, implies that
$$\P\{(\tau_{j})_{j\ge 1} \in A\}=\lim_{k\to \infty} \lim_{i\to \infty} f_k(i)\le \P\{Z_1\in A\}.$$
Applying the same argument for $A^c$, yields the converse inequality hence
$$ \P\{Z_1\in A\} =\P\{ (\tau_{j})_{j\ge 1} \in A\},$$
which proves that  $Z_1$ is, indeed, a sequence of i.i.d. exponential random variables with mean $1$.
\end{proof}
\subsubsection{The case of the $\mu$-Hammersley process with geometric distribution}

We turn back to the case of the $\mu$-Hammersley process with geometric offspring distribution. As in the previous subsection, we want to describe the law of the (right) environment seen from a particle. 

Recall that  $\mathcal{C}_\lambda$ denotes a homogeneous PPP with intensity $\lambda$ on $\mathbb{R}$ where each atom is given a geometric number of lives of parameter $\alpha$. We consider the $\mu$-Hammersley process $H^{\mathcal{C}_\lambda}_{(-\infty,\infty)}$ on the whole line starting from the sources configuration $\mathcal{C}_\lambda$. Its graphical representation on $\mathbb{H}$ define a set of trees, each one rooted at an atom of $\mathcal{C}_\lambda$. We follow the trajectory of the left border of a tagged tree. More precisely, let $X(0)$ denote the position of the atom in $\mathcal{C}_0$ with minimal value on $[0,\infty)$ and for $t>0$, we define $X(t)$ to be the leftmost particle at time $t$ which is a descendent of $X(0)$. Keeping similar notation as in the previous section, we denote by $Z(t)$ the environment seen on the right of $X(t)$, \emph{i.e}
$$
Z(t)=(z^{1}(t),z^{2}(t),\ldots)
$$
is the sequence of gaps length between the particles on the right of $X(t)$ at time $t$, \emph{c.f.} Figure \ref{Fig:helpdefPointofview}.

\begin{figure}
\begin{center}
\includegraphics[height=4cm]{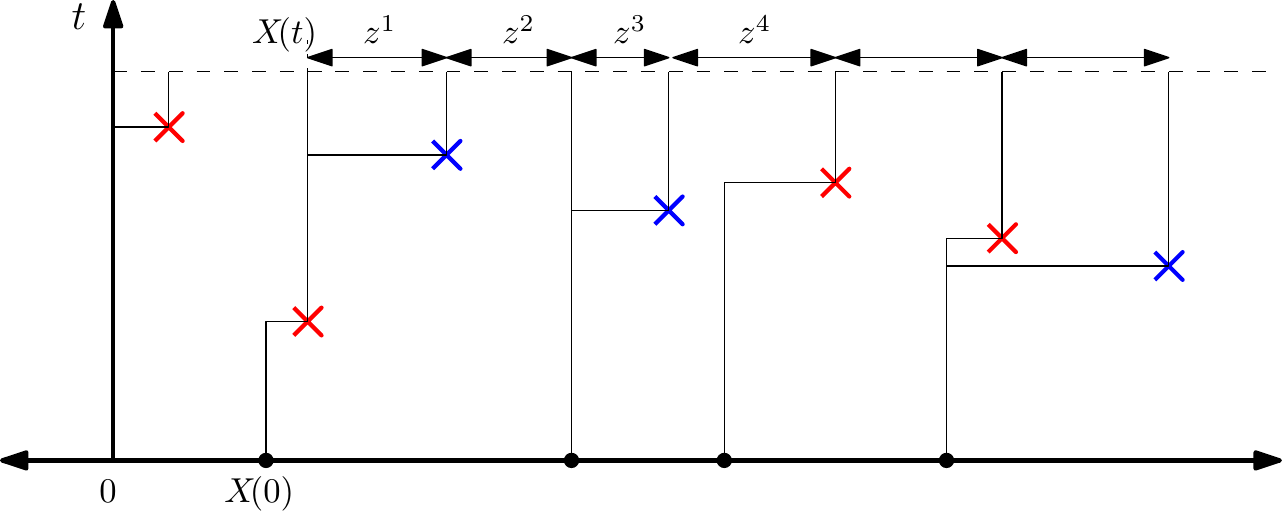}
\caption{\label{Fig:helpdefPointofview}Environment seen from the leftmost particle of the first tree on $[0,\infty)$.}
\end{center}
\end{figure}

\begin{prop}[\textbf{Environment seen from a particle: the geometric case}]\label{Prop:StationaritePalmGeometrique}
For every $t\ge 0$, $Z(t)$ is a sequence of i.i.d. exponential random variables with parameter $\lambda+(1-\alpha)t$.
\end{prop}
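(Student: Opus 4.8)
The plan is to regard the environment seen from the moving tag, \( (Z(t))_{t\ge 0} \), as a measure-valued Markov process and to show that the time-inhomogeneous family of laws ``i.i.d. \( \mathrm{Exp}(\Lambda_t) \) gaps'', with \( \Lambda_t = \lambda + (1-\alpha)t \), is preserved by its evolution. This is exactly the kind of pseudo-stationarity already established in Theorems \ref{Th:SourcesStationnaires} and \ref{Th:RacinesStationnaires}, and I would prove it by an infinitesimal generator computation of the same flavour. The initial condition is immediate: at \( t=0 \) the tagged particle \( X(0) \) is the first atom of the source process \( \mathcal{C}_\lambda \sim \mathrm{PPP}(\lambda) \) lying in \( [0,\infty) \), and by the lack of memory of the exponential law the gaps to the right of the first point of a Poisson process are i.i.d. \( \mathrm{Exp}(\lambda)=\mathrm{Exp}(\Lambda_0) \).

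The structural fact that makes a local generator computation possible is that, when the box grows \emph{from the bottom} (i.e. in the time direction), each new atom removes a life only from its own father, the nearest alive particle on its left, and triggers \emph{no} cascade — in contrast with the growth from the right analysed in Section \ref{sect-augmentationdroite}. Consequently \( Z(t) \) evolves by purely local moves. An atom falling in a gap \( z^i \) with \( i\ge 2 \) splits that gap at a uniform point and, being ``red'' with probability \( \alpha \) (the geometric memorylessness lets one decide at the child whether the father dies), may kill the left endpoint of the gap, merging its left part with \( z^{i-1} \); this is the usual conservative Hammersley move. An atom falling in the first gap \( z^1 \) either leaves the anchor in place (blue case) or kills \( X(t) \) (red case); in the latter case, since \( z^1 \) contains no alive particle, the killer is itself the leftmost alive descendant of \( X(0) \) and therefore becomes the new anchor, so that the environment simply loses the segment lying to its left. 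One also checks that atoms to the left of \( X(t) \), and atoms far to the right, never disturb \( Z(t) \).

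I would then carry out the computation through the red/blue decomposition, splitting the generator of \( Z(t) \) into its blue and red parts and using the linearity of generators exactly as in the proof of Theorem \ref{Th:SourcesStationnaires}. The blue atoms form an independent Poisson superposition of spatial rate \( (1-\alpha) \) which keeps the configuration Poisson while raising its intensity at rate \( (1-\alpha) \), accounting for the drift \( \dot\Lambda_t = 1-\alpha \). The red atoms act as a classical (conservative) Hammersley dynamics, which preserves the Poisson law at fixed intensity; the bulk moves are the standard stationarity of the line process, while the re-anchoring move in \( z^1 \) is of the same nature as the left-boundary move of a Hammersley line, its invariance being verified by the same computation that underlies Proposition \ref{Prop:StationaritePalm}. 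Summing the two contributions shows that i.i.d. \( \mathrm{Exp}(\Lambda_t) \) satisfies the forward equation, which is the assertion.

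The delicate point — and the step I expect to require real care — is this re-anchoring term. One must justify rigorously that when the tag dies the new tag is exactly the atom just created (which uses that the first gap is empty of alive particles), and then check in the generator computation that this left-boundary dynamics is consistent with the bulk, i.e. that the anchor acts as a genuine Hammersley-type boundary leaving the exponential gaps invariant. By contrast, the alternative route mimicking the Palm argument of Proposition \ref{Prop:StationaritePalm} — writing \( Z(t) \) as a random shift of the i.i.d. gap sequence and decoupling the shift from the far tail — seems harder here, since consecutive tagged borders are separated by a \emph{random}, environment-dependent number of particles (the intervening tree sizes) rather than by a single one, so the clean deterministic-increment structure of the line case is lost.
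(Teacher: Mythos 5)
Your proposal is correct and is essentially the paper's proof: both rest on the red/blue representation and the linearity of generators, with blue atoms splitting gaps at rate proportional to their length (raising the Poisson intensity at rate $1-\alpha$) and red atoms acting as a classical Hammersley dynamics that preserves i.i.d.\ exponential gaps. The one simplification you miss is that the red dynamics of $Z(t)$ --- including the re-anchoring of the tag at its killer, which you rightly single out as the delicate step --- coincides exactly with the environment dynamics of a tagged line in the classical line process, so the paper simply invokes Proposition \ref{Prop:StationaritePalm} as a black box rather than carrying out the boundary-versus-bulk generator computation you anticipate.
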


\begin{proof} Let us first point out that $Z$ is a Feller process. This follows from the fact that $H^{\mathcal{C}_\lambda}_{(-\infty,\infty)}$ itself has the Feller property. This, in turn, is an consequence of Theorem \ref{Theo:HalfPlane} which shows that we can construct the $\mu$-Hammersley process on the whole line by taking the limit over finite interval. As in Theorem \ref{Th:SourcesStationnaires}, the proof makes use of the fact the geometric $\mu$-Hammersley process may be seen as a superposition an Hammersley's line process and a PPP whose intensity increases linearly. Recall the red/blue construction described in Section \ref{Sec:GeometricCase}. Each atom of the PPP $\Xi$ is now coloured in red (resp. blue) with probability $\alpha$ (resp.  $1-\alpha$) independently of everything else.
\begin{itemize}
\item Red particles behave like in the usual Hammersley process by killing their left neighbour upon arrival.
\item Blue particles enter the process without affecting particles already present.
\end{itemize}
Since red and blue particles form two independent PPP, we can study their action on the infinitesimal generator $\mathcal{A}$ separately. On one hand, Proposition \ref{Prop:StationaritePalm} shows that red atoms leave invariant the law of the process $Z$
starting from any configuration of gaps $(z^1,z^2,\ldots)$ that is i.i.d. with exponential law. On the other hand, the blue atoms do not modify the position $X(t)$ but split each gap $z^i(t)$ at a uniform position, at rate $(1-\alpha)z^i(t)$. Thus, if we consider only the action of the blue atoms starting from an initial sequence of gaps $(z^1,z^2,\ldots)$ i.i.d. with exponential law of parameter $\lambda$, then, at all time $t$, then resulting configuration is still i.i.d. with exponential law of parameter $(1-\alpha)t + \lambda$. Putting these two facts together, we conclude that, $Z(t)$ is also, at all time, distributed as i.i.d. exponential random variables with parameter $\lambda+(1-\alpha)t$.
\end{proof}

Of course, $X(t)$ is non-decreasing in $t$. We can show that it converges to a finite limit a.s., which completes the proof of Assertion $3.$ (c) of Theorem \ref{Theo:HalfPlane}.

\begin{prop}\label{Prop:LimiteXt}
For every $\alpha\in (0,1)$
$$\lim_{t\to \infty} \E[X(t)-X(0)] = \frac{\alpha}{\lambda (1-\alpha)}.
$$
\end{prop}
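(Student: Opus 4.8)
The plan is to identify the jump dynamics of the nondecreasing process $(X(t))_{t\ge 0}$ explicitly, to read off its mean infinitesimal drift from the environment law supplied by Proposition~\ref{Prop:StationaritePalmGeometrique}, and then to integrate in time. Throughout I would work with the red/blue construction of Section~\ref{Sec:GeometricCase}: red atoms form a PPP of intensity $\alpha\,du\,ds$ and act as in the usual Hammersley process by killing the nearest particle on their left, while the independent blue atoms, of intensity $(1-\alpha)\,du\,ds$, are inert for the particle positions. Recall that $Z(t)=(z^1(t),z^2(t),\dots)$ is the sequence of gaps to the right of $X(t)$, and set $\Lambda_t\defeq\lambda+(1-\alpha)t$.

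First I would pin down exactly how $X$ can move. A blue atom neither kills a particle nor creates a living descendant of $X(0)$ strictly to the left of $X(t)$, so it leaves $X(t)$ unchanged; similarly a red atom landing at $u<X(t)$ or at $u>X(t)+z^1(t)$ neither touches the particle sitting at $X(t)$ nor produces a descendant of $X(0)$ to its left. The only relevant events are red atoms falling in the first right-gap, i.e.\ at some $u\in(X(t^-),X(t^-)+z^1(t^-))$: since that gap is empty, $X(t^-)$ is the nearest left neighbour of such an atom, which therefore kills it and creates a child of it at $u$. As there is no particle whatsoever in $(X(t^-),u)$ and nothing of the tagged tree lives to the left of $X(t^-)$, this new particle is the leftmost living descendant of $X(0)$, so $X$ jumps from $X(t^-)$ exactly onto $u$. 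Hence $X$ is an increasing pure-jump process whose jumps land precisely on the red atoms in its first right-gap.

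Second, I would compute the mean displacement via the Mecke/Campbell formula for the red PPP. Writing the displacement as a sum over red atoms $(u,s)$, $s\le t$, with increment $(u-X(s^-))\mathbf{1}\{X(s^-)<u<X(s^-)+z^1(s^-)\}$, and noting that $X(s^-)$ and $z^1(s^-)$ are measurable with respect to the configuration strictly before time $s$, the compensation formula yields
\begin{equation*}
\E[X(t)-X(0)]=\alpha\int_0^t \E\Big[\int_{X(s)}^{X(s)+z^1(s)}(u-X(s))\,du\Big]\,ds=\frac{\alpha}{2}\int_0^t \E[(z^1(s))^2]\,ds.
\end{equation*}
By Proposition~\ref{Prop:StationaritePalmGeometrique}, $z^1(s)$ is exponential with parameter $\Lambda_s$, so $\E[(z^1(s))^2]=2/\Lambda_s^2$ and the right-hand side equals $\alpha\int_0^t\Lambda_s^{-2}\,ds$. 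Letting $t\to\infty$,
\begin{equation*}
\lim_{t\to\infty}\E[X(t)-X(0)]=\alpha\int_0^\infty\frac{ds}{(\lambda+(1-\alpha)s)^2}=\frac{\alpha}{\lambda(1-\alpha)},
\end{equation*}
which is the claim.

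The genuinely routine parts are the final elementary integral and the bookkeeping in Mecke's formula, where all integrands are nonnegative so that Tonelli applies without qualms. The step I expect to demand the most care is the structural claim of the second paragraph, namely that the leftmost living descendant of $X(0)$ relocates onto the red atom and nowhere else. This rests on three facts about the graphical representation: children always lie to the right of their parent, $X(t)$ is by definition the leftmost living descendant so nothing of the tagged tree lives strictly to its left, and the first right-gap is particle-free at the jump time. I would isolate these as a short lemma on the red/blue representation before invoking them, so that the reduction to the environment law of Proposition~\ref{Prop:StationaritePalmGeometrique} is clean.
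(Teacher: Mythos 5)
Your proof is correct and follows essentially the same route as the paper's: both arguments reduce the motion of $X$ to red atoms falling in the first right-gap, invoke Proposition~\ref{Prop:StationaritePalmGeometrique} to see that this gap is exponential with parameter $\lambda+(1-\alpha)s$, and integrate the resulting drift $\alpha/(\lambda+(1-\alpha)s)^2$ in time. The only difference is presentational: the paper computes $f'(t)=\lim_{h\to 0}h^{-1}\,\E[X(t+h)-X(t)]$ via an $o(h)$ expansion, whereas you obtain the same integral identity exactly through the Mecke/Campbell compensation formula, a slightly tighter packaging of the same computation.
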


\begin{proof} Let $f(t)\defeq \E[X(t)-X(0)]$. We are going to compute 
$$f'(t)=\lim_{h\to 0} \frac{1}{h}\E[X(t+h)-X(t)].$$
Fix $t\ge 0$. We estimate the variation of the process $X$ during a small time interval $[t,t+h]$. Blue atoms of $\Xi$ do not change the value of $X$. Only a red atom falling during the time interval $[t,t+h]$ between $X(t)$ and the first particle on its right will increase the value of $X$. Thus, denoting  by $\tau$ the distance between the particle located at $X(t)$ and its right neighbour and by $U$ a uniform random variable in $[0,1]$, independent of $\tau$, it is not difficult to check that
\begin{eqnarray*}
\E[X(t+h)-X(t)]&=&\E \Big[ U \tau \mathbf{1}_{\left\{\mbox{a red atom falls inside $[X(t),X(t+\tau)]\times [t,t+h]$}\right\}} \Big] + o(h)\\
&= & \E[\alpha U \tau^2] h + o(h).
\end{eqnarray*} 
Proposition \ref{Prop:StationaritePalmGeometrique} states that $\tau$ is an exponential random variable of parameter $\lambda+(1-\alpha)t$. Thus, we get 
$$\E[X(t+h)-X(t)] = \frac{\alpha h }{(\lambda+(1-\alpha)t)^2}+ o(h).$$
Taking $h\to0$ yields $f'(t)=\frac{\alpha }{(\lambda+(1-\alpha)t)^2}$. Therefore, we conclude that
$$\lim_{t\to\infty}f(t)= \lim_{t\to\infty} \frac{\alpha t}{\lambda^2+\lambda (1-\alpha)t}  = \frac{\alpha}{ \lambda (1-\alpha)}.$$
\end{proof}

\begin{cor}\label{arbrepuit} Consider the set $\mathcal{E}$ of particles in $\mathcal{C}_\lambda \cap (-\infty,0]$ whose genealogical tree ultimately enters $[0,+\infty)$. We have, $\E[|\mathcal{E}|]<\infty$.
\end{cor}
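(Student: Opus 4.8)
\emph{The plan.} I would dominate $|\mathcal{E}|$ by a sum of indicator events that involve only the \emph{leftmost}-descendant displacement of each tree --- a quantity already controlled by Proposition \ref{Prop:LimiteXt} --- and then evaluate the expectation of that sum by Mecke's formula for the Poisson process of sources. The point is that the rightward reach of a tree cannot be bounded directly (it could, a priori, drift arbitrarily far to the right), but it \emph{can} be bounded by the final leftmost reach of its right neighbour, which has finite expected displacement.

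\emph{Set-up.} I work with the whole-line process $H^{\mathcal{C}_\lambda}_{(-\infty,\infty)}$. First I note that no new root is ever created: by Theorem \ref{Theo:HalfPlane} the set of live particles at any time is a PPP of positive intensity on all of $\R$, so every incoming atom has infinitely many live particles to its left and hence a father; thus every tree is rooted at a source of $\mathcal{C}_\lambda$. Enumerate the sources in $(-\infty,0]$ as $-y_1>-y_2>\cdots$ with $0<y_1<y_2<\cdots$, and let $\mathcal{T}_j$ be the tree rooted at $-y_j$ (so $\mathcal{T}_{j-1}$ is the right neighbour of $\mathcal{T}_j$). For each $j$ let $\ell_j(t)$ be the position of the leftmost live descendant of $\mathcal{T}_j$ at time $t$; as for $X(t)$ in Proposition \ref{Prop:LimiteXt}, $\ell_j$ is non-decreasing and converges to a finite limit $L_j$, and I set $D_j\defeq L_j+y_j\ge 0$, the total rightward displacement. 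Writing $W_j$ for the rightmost position ever occupied by a vertex of $\mathcal{T}_j$, the definition gives $\mathcal{T}_j\in\mathcal{E}$ iff $W_j\ge 0$.

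\emph{The key inequality.} The crux is to show $W_j\le L_{j-1}$ for $j\ge 2$. Here I use that the graphical representation is a family of non-crossing trees, so that at each \emph{fixed} time the live particles split into contiguous blocks, one per tree, ordered as the roots are ordered. A vertex born in $\mathcal{T}_j$ at time $\tau$ and position $c$ is alive at its birth, hence lies to the left of every live particle of $\mathcal{T}_{j-1}$, giving $c\le \ell_{j-1}(\tau)\le L_{j-1}$; taking the supremum over vertices of $\mathcal{T}_j$ yields $W_j\le L_{j-1}$. Consequently $\mathcal{T}_j\in\mathcal{E}$ forces $L_{j-1}\ge 0$, i.e. $D_{j-1}\ge y_{j-1}$, and therefore
\[
|\mathcal{E}|\ \le\ 1+\sum_{j\ge 1}\mathbf{1}_{\{D_j\ge y_j\}} .
\]

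\emph{Conclusion via Mecke's formula.} Taking expectations and applying Mecke's formula to the PPP of sources (intensity $\lambda\,du$), and using translation invariance of the whole construction, I get
\[
\E\Big[\textstyle\sum_{j\ge 1}\mathbf{1}_{\{D_j\ge y_j\}}\Big]=\lambda\int_0^\infty \P^{!}_{0}[D\ge y]\,dy=\lambda\,\E^{!}_0[D],
\]
where $D$ is the leftmost-descendant displacement of a Palm source. Since $D$ depends only on the environment to the right of the source, and the first source on $[0,\infty)$ is Palm-typical for that right environment, $\E^{!}_0[D]=\lim_{t}\E[X(t)-X(0)]=\frac{\alpha}{\lambda(1-\alpha)}$ by Proposition \ref{Prop:LimiteXt}. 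Hence $\E[|\mathcal{E}|]\le 1+\frac{\alpha}{1-\alpha}=\frac{1}{1-\alpha}<\infty$.

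\emph{Main obstacle.} The delicate step is the inequality $W_j\le L_{j-1}$. The tempting bound ``$\mathcal{T}_j$ stays left of the root $-y_{j-1}$ of its neighbour'' is \emph{false}: because the neighbour's leftmost boundary $\ell_{j-1}$ drifts to the right, the trees interleave as time grows, and one must bound the reach by the \emph{evolved} boundary $L_{j-1}$ rather than by the neighbour's root. Making this rigorous requires the per-time contiguous-block structure of the live particles (from non-crossing) together with the monotonicity of $\ell_{j-1}$. A secondary point to check carefully is the Palm identification that the displacement entering Mecke's formula has exactly the mean computed in Proposition \ref{Prop:LimiteXt}.
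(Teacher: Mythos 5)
Your proof is correct, and while it rests on the same geometric backbone as the paper's, the probabilistic accounting is genuinely different. Both arguments hinge on the fact you isolate as the key inequality: because live particles of distinct trees form contiguous, root-ordered blocks at each fixed time, the rightward reach $W_j$ of a tree is bounded by the limiting position $L_{j-1}$ of the leftmost live particle of its right neighbour, whose expected displacement is finite by Proposition \ref{Prop:LimiteXt}. The paper converts this into a tail bound: it writes $\P\{|\mathcal{E}|>n\}$ as the probability that the tree rooted at $x_{n+1}$ reaches $[0,\infty)$, passes to the leftmost particle of the tree rooted at $x_n$, splits according to whether $x_n\ge -n/2$ (a Poisson large-deviation estimate) or that particle moves by at least $n/2$, identifies the law of this displacement with that of $\lim_t(X(t)-X(0))$, and sums the tails. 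You instead dominate $|\mathcal{E}|$ pathwise by $1+\sum_{j\ge 1}\mathbf{1}_{\{D_j\ge y_j\}}$ and evaluate the expectation exactly by Mecke's formula, getting the explicit bound $\E[|\mathcal{E}|]\le 1+\lambda\,\E^{!}_0[D]=\frac{1}{1-\alpha}$, interestingly independent of $\lambda$. Your route buys three things: no large-deviation step, an explicit constant, and a cleaner treatment of the one delicate identification. Indeed, the paper's asserted equality between the displacement of the tree rooted at $x_n$ and $\lim_t(X(t)-X(0))$ hides a selection bias: conditioning on the position of $x_n$ forces exactly $n-1$ sources in $(x_n,0]$, so the environment to the right of $x_n$ is \emph{not} a Poisson environment, whereas the environment to the right of the first source in $[0,\infty)$ is one, by the strong Markov property --- which is precisely what makes Proposition \ref{Prop:LimiteXt} applicable there. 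Mecke's formula sums over all sources without incurring this bias, and your observation that $D$ is a functional of the right environment only, combined with Slivnyak's theorem, is exactly what legitimizes replacing the Palm expectation by the quantity computed in Proposition \ref{Prop:LimiteXt}. (The paper's tail-sum argument survives, since an approximate domination suffices for summability, but as written it states an equality of laws that is not literally true.) Two points to tighten in your write-up: the contiguous-block property holds for \emph{live} particles only --- full trees, dead vertices included, do interleave --- so it deserves the one-line induction over atom arrivals (a newborn attaches to the nearest live particle on its left, so no live particle of another tree can lie in between) rather than a bare appeal to non-crossing; and in the Mecke step one should note that the sources form a marked Poisson process with i.i.d.\ geometric marks, the Palm version adding at the origin a source carrying an independent mark.
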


\begin{proof}
Let $\ldots < x_2 <x_1 < 0$ be the locations of the particles in $\mathcal{C}_\lambda \cap (-\infty,0]$.
\begin{eqnarray*}
\P\{|\mathcal{E}| > n\}&=& \P\{\mbox{the tree rooted at $x_{n+1}$ eventually enters $[0,+\infty)$} \}\\
&\leq& \P\{\mbox{the leftmost particle of the tree rooted at $x_{n}$ eventually enters $[0,+\infty)$} \}\\
&\le & \P\{x_n\ge -n/2\}+\P\Big\{
\begin{array}{c}
\hbox{the leftmost particle of the tree rooted at $x_n$}\\
\hbox{eventually moves by at least $n/2$}
\end{array}
\Big\}\\
&=& \P\{\mathrm{Poisson}(n/2)\ge n\}+\P\Big\{\lim_{t\to\infty}(X(t) - X(0)) \geq n/2\Big\}.
\end{eqnarray*}
One the one hand, we have, for large $n$,
$$\P\{\mathrm{Poisson}(n/2)\ge n\}\le \left(\frac{e^{1/2}}{2}\right)^n.$$
On the other hand, Proposition \ref{Prop:LimiteXt} implies that
$$\sum_n \P\Big\{\lim_{t\to\infty}(X(t) - X(0)) \geq n/2\Big\} <\infty.$$
Therefore $\E[|\mathcal{E}|] = \sum_n \P\{|\mathcal{E}|>n|\} < +\infty$.
\end{proof}

\subsection{Asymptotic of $R(t)$ and $\mathbf{R}(n)$ in the geometric case}

Recall that $R(t)$ is the number of roots (\emph{i.e.} trees) for the $\mu$-Hammersley process $H$ on $[0,1]$ up to time $t$ when there is neither source nor sink. The aim of this section is to prove the first part of Theorem \ref{theocontinu}.
\begin{prop}[\textbf{Theorem \ref{theocontinu} (i) for $R(t)$}]
For a geometric offspring distribution $\mu$ with parameter $\alpha$, we have
$$
\lim_{t\to\infty} \frac{R(t)}{\log t} = \frac{1}{1-\alpha} \quad\text{ a.s. and in $L^1$}.
$$
\end{prop}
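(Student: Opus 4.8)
The plan is to prove the statement in three stages: first pin down the constant by showing $\E[R(t)]\sim \frac{1}{1-\alpha}\log t$, then establish a concentration estimate, and finally upgrade to almost sure and $L^1$ convergence using the monotonicity of $t\mapsto R(t)$. Since Lemma \ref{lem:cvesperance} already provides the existence of $\lim_t \E[R(t)]/\log t=c_\mu$, the only thing to do for the first stage is to evaluate the limit. I would work through the root process, using $|\mathcal{R}(t,1)|\overset{\mathrm{law}}{=}R(t)$ and the stationary description of Corollary \ref{cor:sinkstationnaireinfinie}: for every $x$ the configuration $\mathcal{R}^{\emptyset,\mathcal{S}_0}(x,1)$ is an inhomogeneous PPP on $(0,1)$ with intensity $\frac{ds}{(1-\alpha)s}$, so its number of roots above a height $s$ is Poisson with mean $\frac{1}{1-\alpha}\log(1/s)$. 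For the upper bound, the domination $\mathcal{R}(x,1)\subset \mathcal{R}^{\emptyset,\mathcal{S}_0}(x,1)$ established inside the proof of Theorem \ref{Theo:HalfPlane}, together with the fact that a root at height $r$ requires an atom of $\Xi$ at height $r$, lets me split at level $s=1/t$: the roots above $1/t$ are dominated by $\mathrm{Poisson}(\frac{1}{1-\alpha}\log t+O(1))$, while the roots below $1/t$ are at most the number of atoms of $\Xi$ in $[-t,0]\times(0,1/t)$, which is $\mathrm{Poisson}(1)$. This gives $c_\mu\le \frac{1}{1-\alpha}$.

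For the matching lower bound I would use the truncated stationary processes $\mathcal{R}^{\mathcal{C}_\lambda,\mathcal{S}_\lambda}$ of Theorem \ref{Th:RacinesStationnaires} as in the lower-bound argument of Theorem \ref{Theo:HalfPlane}: adding sources only removes roots, so $\mathcal{R}^{\mathcal{C}_\lambda,\emptyset}(x,1)\subset \mathcal{R}(x,1)$, and $\mathcal{R}^{\mathcal{C}_\lambda,\emptyset}(x,1)$ converges in law to the stationary $\mathcal{S}_\lambda$-configuration. The delicate point is to transfer the exact stationary count down to heights of order $1/t$ while keeping the error bounded, i.e.\ to show that the finite (no source, no sink) process is not missing too many low roots compared with the stationary one; this is precisely what the boundary estimates Proposition \ref{Prop:LimiteXt} and Corollary \ref{arbrepuit} are for, since they bound the total displacement of a tagged tree's left edge and the number of source-trees ever crossing a fixed vertical line. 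I expect this comparison to be the \emph{main obstacle}. A complementary and more transparent way to read off the constant is to compute the instantaneous root-creation rate: by Proposition \ref{Prop:transitionH} a root appears exactly when an atom lands left of every alive particle, so $\frac{d}{dt}\E[R(t)]=\E[L_t]$ with $L_t$ the leftmost alive particle; by Corollary \ref{cor:sourcefromzero} the alive particles form a PPP of intensity $(1-\alpha)t$, whose leftmost point has mean $\frac{1-e^{-(1-\alpha)t}}{(1-\alpha)t}\sim\frac{1}{(1-\alpha)t}$, and Proposition \ref{Prop:StationaritePalmGeometrique} makes this left-edge description insensitive to the sinks on the right. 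Integrating yields $\E[R(t)]\sim\frac{1}{1-\alpha}\log t$, hence $c_\mu=\frac{1}{1-\alpha}$.

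For the concentration I would decompose the roots by height along the bands $(e^{-(i+1)},e^{-i}]$, $i\ge 0$. In the stationary process these bands each carry an independent $\mathrm{Poisson}(\frac{1}{1-\alpha})$ number of roots, so the count above height $e^{-m}$ is a sum of $m$ i.i.d.\ Poisson variables; combined with the one-sided dominations of the previous paragraphs this produces exponential (Poisson-type) tails for $R(t)$ about $\frac{1}{1-\alpha}\log t$, the upper tail coming from the stationary domination and the lower tail from the truncated sandwich.

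Finally I would conclude the strong convergence. The process $t\mapsto R(t)$ is non-decreasing because the heap-sorting algorithm is online, so growing the box from the top only creates roots. Evaluating the two-sided tail bounds along the geometric subsequence $t_k=e^{k}$ makes the bounds summable, so Borel--Cantelli gives $R(t_k)/k\to \frac{1}{1-\alpha}$ almost surely; the monotonicity of $R$ together with $\log t_{k+1}/\log t_k\to 1$ then interpolates to all $t$ and yields $R(t)/\log t\to \frac{1}{1-\alpha}$ a.s. The $L^1$ convergence follows from the a.s.\ convergence and the uniform integrability of $R(t)/\log t$, which is itself a consequence of the Poisson domination.
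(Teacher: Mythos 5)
Your upper bound and your endgame are fine: the domination $\mathcal{R}(t,1)\subset\mathcal{R}^{\emptyset,\mathcal{S}_0}(t,1)$ (monotonicity in the initial configuration, as in the proof of Theorem \ref{Theo:HalfPlane}), the split at height $1/t$ with the low roots bounded by the $\mathrm{Poisson}(1)$ number of atoms of $\Xi$ in $(-t,0)\times(0,1/t)$, and the uniform integrability coming from this Poisson domination all hold; the paper gets the same one-sided bound slightly differently, by the pathwise comparison $R(t)\leq R^{\mathcal{C}_1,\mathcal{S}_1}(t)+\sum_{(u,\nu)\in\mathcal{C}_1}(\nu-1)$, where sinks only create trees and a source with $\nu$ lives merges at most $\nu$ of them. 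The genuine gap is the lower bound, which you yourself flag as ``the main obstacle'' and never close, and which is in fact the heart of the proposition. Your fallback computation is not a proof: the identity $\frac{d}{dt}\E[R(t)]=\E[L_t]$ is correct, but you evaluate $\E[L_t]$ from Corollary \ref{cor:sourcefromzero}, which describes $H^{\emptyset,\mathcal{S}_0}$, the process \emph{with} the stationary sinks, whereas $R(t)$ is defined from $H=H^{\emptyset,\emptyset}$. These two processes do not have the same law, and the discrepancy caused by sinks on the right boundary propagates leftward in time --- this is exactly the failure of compatibility when boxes grow to the right described in Section \ref{sect-augmentationdroite}. Proposition \ref{Prop:StationaritePalmGeometrique} concerns the environment to the \emph{right} of a tagged particle in the full-line process with sources $\mathcal{C}_\lambda$; it does not make the leftmost alive particle of the no-sink process on $[0,1]$ ``insensitive to the sinks'', which is precisely what would need to be proved. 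The same missing comparison undermines your lower tail: you would need $\mathcal{R}^{\mathcal{C}_\lambda,\emptyset}(t,1)$ to be quantitatively close to its stationary limit at finite horizon $t$ with $\lambda$ of order $1/t$, and neither Proposition \ref{Prop:LimiteXt} nor Corollary \ref{arbrepuit} supplies such a quantitative coupling on its own.

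The paper closes this gap with a device absent from your proposal. It takes an \emph{independent} auxiliary PPP $\Xi'$, builds from it the stationary representation with sources $\mathcal{C}_1$ and sinks $\mathcal{S}_1$, and isolates the set $\mathcal{T}'$ of trees touching a source or a sink; $\E[|\mathcal{T}'|]<\infty$ is where Corollary \ref{arbrepuit} actually enters. It then takes $f$ to be the lowest non-decreasing function whose hypograph contains those trees, and splices the original field $\Xi$ above the graph of $f$ via the measure-preserving map $\hat{F}(x,t,\nu)=(x,t+f(x),\nu)$. The spliced field $\hat{\Xi}$ is again a unit PPP independent of $(\mathcal{C}_1,\mathcal{S}_1)$, so the associated process is stationary, and Proposition \ref{Prop:couplagesousdomaine} (the ``falling down'' comparison, which you never invoke for the lower bound) converts this into the pathwise inequality
\begin{equation*}
R(t)\;\geq\; \hat{R}^{\mathcal{C}_1,\mathcal{S}_1}(t)-|\hat{\mathcal{T}}|,
\end{equation*}
with an error $|\hat{\mathcal{T}}|$ that is a single a.s.\ finite random variable not depending on $t$. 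Together with the stationary count of Theorem \ref{Th:RacinesStationnaires} this gives the a.s.\ lower bound directly, with no concentration estimate, no Borel--Cantelli along $t_k=e^k$, and no interpolation needed. Without this (or an equivalent) construction, your argument establishes only $\limsup_{t\to\infty}R(t)/\log t\leq \frac{1}{1-\alpha}$, not the proposition.
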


\begin{proof}

We first compute the asymptotic number of trees in the stationary case when sources and sinks are added and then compare it to $R(t)$. Let $R^{\mathcal{C}_1,\mathcal{S}_1}(t)$ be the number of trees in $[0,1]\times [0,t]$ defined by the $\mu$-Hammersley process with a geom$(\alpha)$ number of lives, when 
\begin{itemize}
\item $\mathcal{C}_1$ is distributed as a PPP with intensity $1$ on $[0,1]$ and each source is given an independent geometric number of lives with parameter $\alpha$. 
\item $\mathcal{S}_1$ is a PPP with intensity $\frac{ds}{1+(1-\alpha)s}$ on $ [0,+\infty)$.
\end{itemize}
Notice that a tree can start either at a root on the $y$-axis or at a source on the $x$-axis (see for example Figure \ref{Fig:SourcesStationnaires}) thus
$$
R^{\mathcal{C}_1,\mathcal{S}_1}(t)= |\{\hbox{roots on the $y$-axis with height smaller than $t$}\}| + |\mathcal{C}_1|.
$$
The number of sources $|\mathcal{C}_1|$ is a Poisson random variable with parameter $1$ that does not depend on $t$. By Theorem \ref{Th:RacinesStationnaires}, the roots on the $y$-axis are distributed as an inhomogeneous PPP with intensity $\frac{ds}{1+(1-\alpha)s}$. Thus, the number of roots is a Poisson random variable with parameter
$$
\int_0^t \frac{ds}{1+(1-\alpha)s}=\frac{1}{1-\alpha} \log((1-\alpha)t),
$$
from which we easily deduce that
$$\frac {R^{\mathcal{C}_1,\mathcal{S}_1}(t)}{\log t} \rightarrow \frac{1}{1-\alpha} \qquad a.s. \mbox{ and in } L^1.$$ 

To prove the upper bound for $R(t)$, we compare $R(t)$ with  $R^{\mathcal{C}_1,\mathcal{S}_1}(t)$ when both processes are constructed using the same PPP $\Xi$. Clearly, adding sinks only increases the number of trees. A source with $\nu$ lives merges at most $\nu$ trees together, thus decreases the number of trees by, at most $\nu-1$. Therefore, we get the upper bound
\begin{equation}\label{eq:theolowerbound} 
R(t)\leq R^{\mathcal{C}_1,\mathcal{S}_1}(t)+ \sum_{(u,\nu) \in \mathcal{C}_1} (\nu-1).
\end{equation}
The number of sources being $\mathrm{Poisson}(1)$ and each of them having a geometric number of lives, we deduce that
\begin{equation*} 
\limsup_{t \to \infty} \frac{R(t)}{\log t}\le \lim_{t \to \infty} \frac{R^{\mathcal{C}_1,\mathcal{S}_1}(t)}{\log t} = \frac{1}{1-\alpha}\quad\hbox{a.s.}
\end{equation*}
The lower bound is more delicate. Recall that $\Xi$ denotes the PPP used to construct the process $R(t)$. We now consider another copy $\Xi'$ independent of $\Xi$. The sources $\mathcal{C}_1$ and sinks $\mathcal{S}_1$ processes are assumed to be independent of $\Xi$ and $\Xi'$. Let  $H'^{\mathcal{C}_1,\mathcal{S}_1}$ denote the stationary $\mu$-Hammersley process with sources and sinks constructed from $\Xi'$.  Let $\mathcal{T}'$ denote the set of trees in the graphical representation of $H'^{\mathcal{C}_1,\mathcal{S}_1}$ on $[0,1]\times[0,\infty)$ which contain a source or a sink. Since the number of sources is finite, the number of trees that contain a source is also finite. Besides, the number of trees that contain a sink has the same law as the random variable $|\mathcal{E}|$ introduced in Corollary \ref{arbrepuit}  (with $\lambda=1$). This proves that $\E[|\mathcal{T}'|] < +\infty$ so that $|\mathcal{T}'|<\infty$ is finite a.s. Consider now the set $\mathcal{A}$ of atoms of $\Xi'\cup \mathcal{C}_1\cup \mathcal{S}_1$ which belong to a tree in $\mathcal{T}$. Let $f:[0,1]\mapsto [0,\infty]$ denote the lowest non-decreasing function such that $\mathcal{A}$ belongs to its hypograph. See Figure \ref{Fig:proofps} for an illustration. 
\begin{figure}[!ht]
\begin{center}
\includegraphics[height=10.5cm]{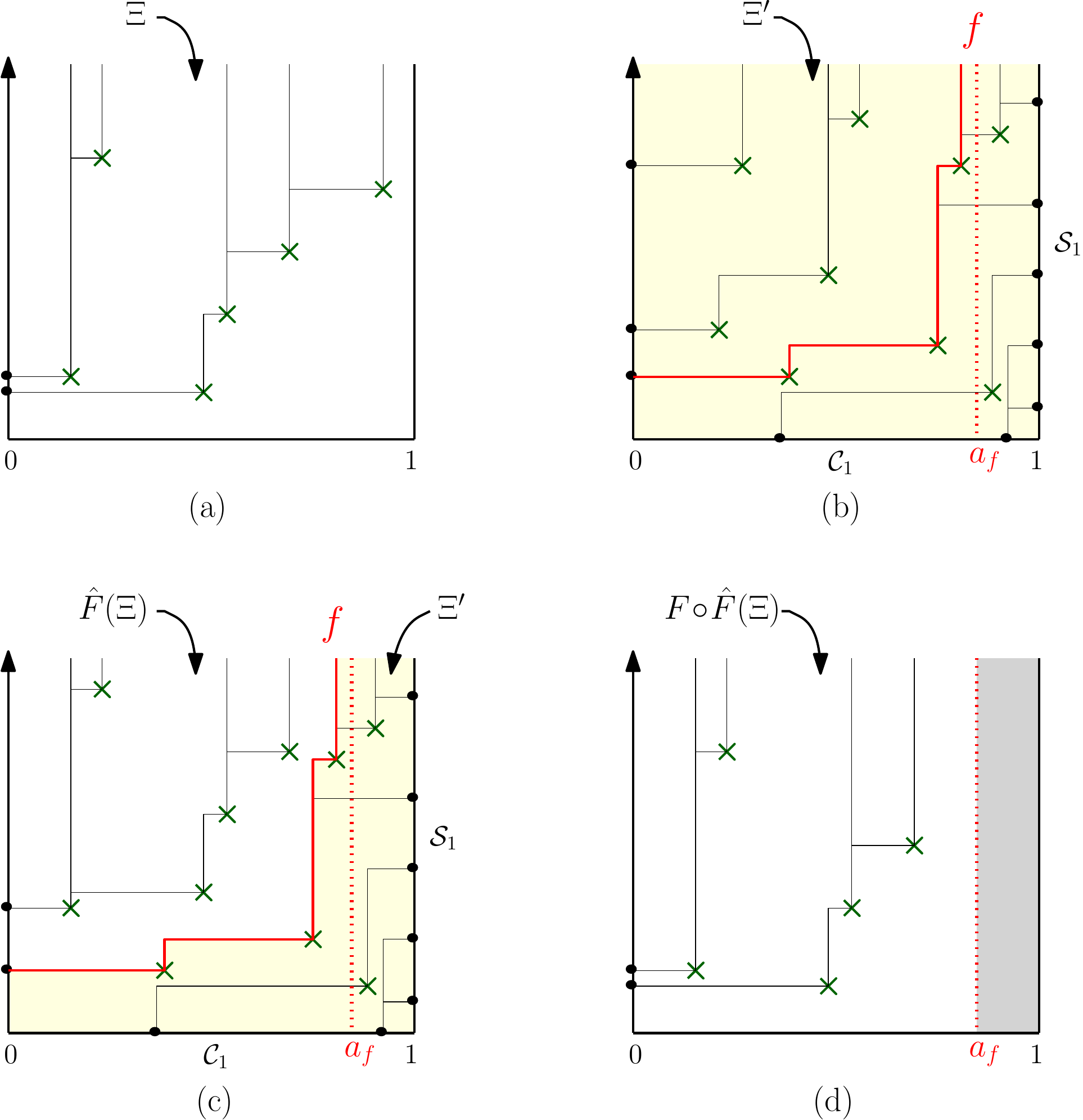}
\caption{\label{Fig:proofps}Proof of the lower bound. (a) The initial graphical representation constructed with $\Xi$. (b) The representation defined by the auxiliary PPP $\Xi'$. The function $f$ is in red and its asymptote at $a_f$ is signified by the dotted red line (here, $\mathcal{T}' =3$) . (c) The graphical representation with $\hat{\Xi}$, the yellow part is the hypograph of $f$ and contains the atoms of $\Xi'$. The upper part is the image by $\hat{F}$ of the atoms of the initial PPP $\Xi$. (d) Going back to $\Xi$ using the mapping $F$.}
\end{center}
\end{figure}
As in Proposition \ref{Prop:couplagesousdomaine}, set
$$a_f \defeq \inf\{x,\; f(x) = +\infty\}$$ and 
define the domain $\mathcal{D}_f  \defeq \{ (x,t) \in (0,a_f)\times (0,+\infty),\; t > f(x)\}$ and its complement  $\mathcal{D}^c_f  \defeq \{ (x,t) \in (0,1)\times (0,+\infty),\; t \le f(x)\}$. Consider
the mapping
\begin{equation*}
\hat{F} : \begin{array}{ccc}
(0,a_f)\times(0,+\infty) \times \mathbb{N} &\to& \mathcal{D}_f \times \mathbb{N}  \\
(x,t,\nu) &\mapsto& (x,t+f(x),\nu).
\end{array}
\end{equation*}
We define a new set of points $\hat{\Xi}\subset (0,1)\times (0,\infty)\times \N$ constructed from $\Xi$ and $\Xi'$ by keeping the atoms of $\Xi'$ on $\mathcal{D}^c_f$ and the atoms of $\hat{F}(\Xi)$ on $\mathcal{D}_f$:
$$
\hat{\Xi} \defeq \left(\Xi'\cap (\mathcal{D}^c_f\times \N)\right)\cup \hat{F}(\Xi).
$$
Let $\hat{\mathcal{T}}$ be the set of trees in the graphical construction using the atoms of $(\hat{\Xi},\mathcal{C}_1,\mathcal{S}_1)$ that contain a source or a sink. We remark that, in fact, $\hat{\mathcal{T}} = \mathcal{T}'$. This means that, given $\hat{\Xi}$ together with $\mathcal{C}_1$ and $\mathcal{S}_1$, we can recover $f$ and the set $\Xi'\cap (\mathcal{D}^c_f\times \N)$.  In particular, given $f$, the point process $\Xi'$ restricted to $\mathcal{D}_f\times \N$ is independent of the triplet $(\Xi'\cap (\mathcal{D}^c_f\times \N),\mathcal{C}_1,\mathcal{S}_1)$ and has the law of a PPP with unit spatial intensity on $\mathcal{D}_f\times \N$. Hence, we get the equality in law 
$$\Big(\Xi'\cap \{\mathcal{D}_f\times \N\},\;\Xi'\cap \{\mathcal{D}^c_f\times \N\},\;\mathcal{C}_1,\;\mathcal{S}_1\Big)\overset{\mbox{\tiny{law}}}{=} 
\Big(\hat{F}(\Xi),\;\Xi'\cap \{\mathcal{D}^c_f\times \N\},\;\mathcal{C}_1,\;\mathcal{S}_1\Big).$$
In particular, we deduce that $\hat{\Xi}$ is a PPP on $(0,1)\times (0,\infty) \times \N$ with unit spatial intensity and is independent of the sources and sinks processes $\mathcal{C}_1$ and $\mathcal{S}_1$. Hence, the process $\hat{H}^{\mathcal{C}_1,\mathcal{S}_1}$ constructed from the atoms of $\hat{\Xi}$ and with sources $\mathcal{C}_1$ and sinks $\mathcal{S}_1$ has the law of the stationary $\mu$-Hammersley process with sources and sinks. Consider now the number of roots $\hat{R}^{\mathcal{C}_1,\mathcal{S}_1}(t)$ created before time $t$. We can write
\begin{equation}\label{Eq:mino1}
\hat{R}^{\mathcal{C}_1,\mathcal{S}_1}(t)= |\{\text{trees of $\hat{\mathcal{T}}$ created before $t$}\}|+ |\{\text{trees not in  $\hat{\mathcal{T}}$ created before $t$}\}|.
\end{equation}
As in Proposition \ref{Prop:couplagesousdomaine}, we use the notation $R(\xi)$ to denote the number of roots in the graphical representation constructed from a finite set of atoms $\xi$. We also set $B_t \defeq [0,1]\times[0,t]$. With this convention, the number of roots created before time $t$ that do not belong to  trees in $\hat{\mathcal{T}}$ is $R(\hat{F}(\Xi)\cap B_t)$. 
Define 
\begin{equation*}
F : \begin{array}{ccc}
 \mathcal{D}_f \times \mathbb{N} &\to& (0,a_f)\times(0,+\infty) \times \mathbb{N} \\
(x,t,\nu) &\mapsto& (x,t-f(x),\nu),
\end{array}
\end{equation*} 
so that $F\circ \hat{F}=\mbox{Id}$ on $(0,a_f)\times(0,+\infty) \times \mathbb{N}$. According to Proposition \ref{Prop:couplagesousdomaine}, we have 
\begin{equation}\label{Eq:mino2}
R(\hat{F}(\Xi)\cap B_t)\le R(\Xi\cap \{(0,a_f)\times(0,t)\}). 
\end{equation} 
Moreover, as explained in Section \ref{sect-augmentationdroite}, adding the atoms in the strip $[a_f,1)\times (0,t)$ can only increase the number of roots hence
\begin{equation}\label{Eq:mino3}
R(\Xi\cap \{(0,a_f)\times(0,t)\})\le R(\Xi\cap \{(0,1)\times(0,t)\})=R(t). 
\end{equation}
Combining \eqref{Eq:mino1}, \eqref{Eq:mino2} and \eqref{Eq:mino3}, we deduce that
\begin{equation}\label{eq:theoupperbound} 
R(t)\ge \hat{R}^{\mathcal{C}_1,\mathcal{S}_1}(t)- |\hat{\mathcal{T}}|
\end{equation}
which, combined with $\E[|\hat{\mathcal{T}}|]<\infty$, shows that
$$\liminf_{t \to \infty} \frac{R(t)}{\log t}\ge  \lim_{t \to \infty} \frac{\hat{R}^{\mathcal{C}_1,\mathcal{S}_1}(t)}{\log t}=\frac{1}{1-\alpha} \quad\hbox{a.s.}$$
The combination of \eqref{eq:theolowerbound} and \eqref{eq:theoupperbound} also implies the $L^1$ convergence. 
\end{proof}

The proof of Theorem \ref{theogeometric} counting the asymptotic number of geometric heaps required by the heap sorting algorithm is now an easy consequence of the previous estimate on $R$.  

\begin{proof}[Proof of Theorem \ref{theogeometric}.]
Recall that $R$ and $\mathbf{R}$ are time changed of each other such that
$$
\mathbf{R}(n)=R({t(n)}),
$$
where
$$t(n)\defeq\inf\left\{t\ge 0, \mathrm{card}\left(\Xi\cap [0,1]\times [0,t]\times \N\right)=n\right\}.$$
Since $\Xi$ has unit Lebesgue intensity, we have $t(n)/n\to 1$ a.s. which directly implies
$$
\frac{\mathbf{R}(n)}{\log n}=\frac{R(t(n))}{\log t(n)}\frac{\log t(n)}{\log n}\underset{n\to\infty}{\longrightarrow} \frac{1}{1-\alpha} \quad\text{a.s.}
$$
Concerning the $L^1$ convergence, we can write, in view of \eqref{eq-discretcontinu}, that 
$$R(t)=\mathbf{R}(K(t))$$
where $K(t)$ is a Poisson Process with rate $1$ independent of $\mathbf{R}$. Since $R(t)/\log t$ converges to $1/(1-\alpha)$ in $L^1$, we have
$$\lim_{t \to +\infty}\E\left[\left(\frac{R(t)}{\log t} -\frac{1}{1-\alpha}\right)^{\!+}\right] = 0.
$$
On the other hand, we can write
\begin{eqnarray*}
\E\left[\left(\frac{R(t)}{\log t} -\frac{1}{1-\alpha}\right)^{\!+}\right]& \ge  & \E\left[\left(\frac{\mathbf{R}(K(t))}{\log t} -\frac{1}{1-\alpha}\right)^{\!+} \mathbf{1}_{\{K(t)\ge t\}}\right]\\
&\ge & \E\left[\left(\frac{\mathbf{R}(\lfloor t \rfloor)}{\log t} -\frac{1}{1-\alpha}\right)^{\!+} \mathbf{1}_{\{K(t)\ge t\}}\right]\\
& = & \E\left[\left(\frac{\mathbf{R}(\lfloor t \rfloor)}{\log t} -\frac{1}{1-\alpha}\right)^{\!+}\right]\P\{ K(t)\ge t\}.
 \end{eqnarray*} 
Since $\P\{K(t)\ge t\} \to 1/2$ as $t$ tend to infinity, we deduce that 
$$
\lim_{n \to +\infty}\E\left[\left(\frac{\mathbf{R}(n )}{\log n} -\frac{1}{1-\alpha}\right)^{\!+}\right] = 0.
$$
Combining this fact with Lemma \ref{lem:cvesperance} shows that the convergence also holds in  $L^1$.
\end{proof}

\section{General trees and regular trees}\label{Sec:CasRegulier}

\subsection{Proof of Theorems \ref{theoesperance} and \ref{theoregulier}}\label{SousSec:PreuveCasRegulier}

Suppose that $\mu$ and $\mu'$ are two offspring distributions such that 
\begin{equation}\label{eq:inegE}
\mathbb{E}[\nu]\leq \mathbb{E}[\nu'].
\end{equation}
It is natural to expect that the heap sorting algorithm of a sequence $\mu'$-distributed should require, in average, less trees than the sorting of a $\mu$-distributed sequence \emph{i.e.} $c_{\mu'} \leq c_{\mu}$. Under the stronger assumption that $\mu$ is stochastically dominated by $\mu'$, this result is straightforward by a trivial coupling. However, it is false in general as we can find two offspring distributions $\nu$ and $\nu'$ such that \eqref{eq:inegE} holds yet $c_\mu < c_{\mu'}$ (\emph{c.f.} Remark \ref{rem:contrex}). 

Even though we cannot order the value of the constants $c_\mu$ between general offspring distributions, we can still prove that, among all reproduction laws with a given expectation, the one with smallest variance is the one that minimizes the average number of trees required. This result is the key to bootstrapping the estimates obtained for the geometric law to general offspring distributions.  

\begin{prop}\label{propcouplage} Let $\mu$ and $\mu'$ denote two offspring distributions on $\N$ with the same expectation. Suppose also that $\mbox{supp}(\mu') \in \{\ell,\ell+1\}$ for some $\ell\in\N$.
Fix $n>0$ and  a finite sequence of labels $(u_i)_{1\le i \le n}$  in $(0,1)$ such that $u_i\neq u_j$ for $i\neq j$. Let $\mathcal{V}=(\nu_i)_{1\le i\le n}$ and  $\mathcal{V}'=(\nu'_i)_{1\le i\le n}$ be two independent sequences of i.i.d. random variables with respective law $\mu$ and $\mu'$.
 We denote by $\mathbf{R}_{\mathcal{V}}$ (resp. $\mathbf{R}_{\mathcal{V}'}$) the number of trees obtained by the heap sorting algorithm for the sequence of  $(u_i,\nu_i)_{1\le i \le n}$  (resp. $(u_i,\nu'_i)_{1\le i \le n}$). Then we have 
\begin{equation}\label{eq:coulpres}
\E[\mathbf{R}_{\mathcal{V}'}]\le \E[\mathbf{R}_{\mathcal{V}}].
\end{equation}
\end{prop}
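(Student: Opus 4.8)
The plan is to prove \eqref{eq:coulpres} by a \emph{coordinatewise convexity} argument combined with the fact that, among all offspring laws on $\N$ with a prescribed mean, the one supported on two consecutive integers is the smallest in the convex order.

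First I would reduce the inequality to a one-coordinate statement. Since the $\nu_i$ (resp.\ the $\nu'_i$) are independent, introduce the hybrid vectors $W^{(j)}=(\nu'_1,\dots,\nu'_j,\nu_{j+1},\dots,\nu_n)$, so that $W^{(0)}=\mathcal V$ and $W^{(n)}=\mathcal V'$, and write $\mathbf R(w)$ for the \emph{deterministic} number of trees produced by the algorithm run on the fixed labels $(u_i)$ with life-vector $w$. It suffices to show $\E[\mathbf R(W^{(j)})]\le\E[\mathbf R(W^{(j-1)})]$ for each $j$, since telescoping then gives the claim. The vectors $W^{(j)}$ and $W^{(j-1)}$ differ only in their $j$-th coordinate, which is $\mu'$-distributed in the former and $\mu$-distributed in the latter, all other coordinates being shared and independent of it. Conditioning on those other coordinates, everything reduces to comparing $\E_{\mu'}[g]$ and $\E_{\mu}[g]$ for the single-variable function $g(k):=\mathbf R(\dots,k,\dots)$ obtained by freezing the remaining lives.

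The comparison of these two expectations is immediate once $g$ is known to be convex on $\N$. Indeed, let $m$ be the common mean and let $\{\ell,\ell+1\}\supseteq\mathrm{supp}(\mu')$, so that $\mu'(\ell)=\ell+1-m$ and $\mu'(\ell+1)=m-\ell$. Let $L$ be the affine function through $(\ell,g(\ell))$ and $(\ell+1,g(\ell+1))$. Since $g$ is convex and no integer lies in the open interval $(\ell,\ell+1)$, one has $g(x)\ge L(x)$ for \emph{all} $x\in\N$; hence $\E_\mu[g]\ge\E_\mu[L]=L(m)=\E_{\mu'}[g]$, the middle equality using $\E_\mu[\nu]=m$ and the last one the definition of $\mu'$. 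This is exactly the statement that $\mu'$ is minimal for the convex order among laws on $\N$ with mean $m$, and combined with the reduction above it reduces the whole proposition to the convexity of $g$.

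The heart of the argument, and the step I expect to be the main obstacle, is therefore the following: for fixed labels and fixed lives of all particles except the $i$-th, the number of trees $g(k)=\mathbf R(\dots,\underset{\scriptscriptstyle i}{k},\dots)$ is a convex function of $k\in\N$. I would prove this by coupling, on the same labels and the same frozen lives, the three runs in which particle $i$ is given $k$, $k+1$ and $k+2$ lives, processing the particles in the common order. The three runs coincide until particle $i$ receives its $(k+1)$-th child; at that step the run with only $k$ lives must reroute that child, creating a single discrepancy that subsequently propagates like a \emph{second class particle} (as in Section~\ref{sect-augmentationdroite}). A first claim is that such a discrepancy alters the final tree count by at most one, so that $g(k)-g(k+1)\in\{0,1\}$; this follows from the monotonicity of the dynamics with respect to the initial configuration, already used for the root process in the proof of Theorem~\ref{Theo:HalfPlane} (giving one extra life to an atom removes at most one extra root in the representation of Proposition~\ref{Prop:transitionR}, and two configurations differing by a single root stay nested and differ by at most one root forever). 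The decisive and more delicate point is the \emph{monotonicity of this saving}, namely $g(k)-g(k+1)\ge g(k+1)-g(k+2)$: the child absorbed when passing from $k+1$ to $k+2$ lives arrives later and into a more densely populated region than the one absorbed when passing from $k$ to $k+1$, so it is weakly less likely to trigger a new tree. I would make this precise by running the three root processes of Proposition~\ref{Prop:transitionR} simultaneously and showing that the lower of the two discrepancy roots survives whenever the higher one does; this ordering of the two coupled second class particles is where the real combinatorial work lies, and I would isolate it as a separate lemma.
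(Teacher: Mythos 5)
Your overall architecture is correct and, in fact, coincides with the paper's: the paper also replaces the lives one coordinate at a time (its induction at the end of the proof of Proposition \ref{propcouplage} is your telescoping over hybrid vectors), and its two-case analysis (a)/(b) for a single coordinate is exactly your chord/convex-order argument in disguise. Indeed, the paper's Lemma \ref{lemmecouplage} -- increments of $m\mapsto\mathbf{R}_m$ lie in $\{-1,0\}$, and once an increment vanishes all subsequent ones vanish -- is precisely the statement that your function $g$ is non-increasing and convex with increments in $\{-1,0\}$; granted that, your comparison $\E_{\mu'}[g]\le\E_{\mu}[g]$ via the affine minorant through $(\ell,g(\ell))$ and $(\ell+1,g(\ell+1))$ is correct, and your justification of $g(k)-g(k+1)\in\{0,1\}$ by nested root processes is also valid (the paper gets this more directly from the optimality of the greedy algorithm).

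The genuine gap is that the convexity of $g$ -- which you yourself identify as ``where the real combinatorial work lies'' -- is never proved: you give a heuristic (``the child absorbed when passing from $k+1$ to $k+2$ lives arrives later and into a more densely populated region'') and announce a separate lemma that you do not supply. Since the entire proposition reduces to that lemma, the proposal as written proves nothing beyond the reduction. For comparison, the paper closes this gap with a monotone coupling of second class particles: the particle of index $m$ encodes the modification turning $\mathcal{G}^{m}$ into $\mathcal{G}^{m-1}$, and the key observation is that in this passage solid vertical lines of the graphical representation can only become dotted, never the reverse; consequently the trajectory of the index-$(m-1)$ particle stays below that of the index-$m$ particle, so if the lower one exits through the top of the box (no extra tree) then so does the upper one, which is exactly (ii) of Lemma \ref{lemmecouplage}. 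Your alternative plan with three coupled root processes can be completed, but only by the case analysis you omit: writing $d_1$ for the discrepancy root between the $k$- and $(k+1)$-runs and $d_2$ for the one between the $(k+1)$- and $(k+2)$-runs (initially the $(k+1)$-th and $(k+2)$-th lowest roots above the tagged atom, so $d_1<d_2$), one must verify for each subsequent atom, according to the position of its height $s$ relative to $d_1$ and $d_2$ and to the number of common roots it kills, that the ordering $d_1<d_2$ persists and that the state ``$d_1$ absorbed while $d_2$ survives'' is never entered -- for instance, when $s<d_1$ and the atom kills $d_1$ in the $k$-run, the discrepancy is displaced to the $\nu$-th lowest common root above $s$, which is still below $d_2$. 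That verification (or the paper's coupling) is the missing content, and without it the inequality \eqref{eq:coulpres} is not established.
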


We emphasize that the sequence of labels $(u_i)$ in the proposition above is deterministic. Hence, the expectation in \eqref{eq:coulpres} is taken only on the randomness of the number of lives $\mathcal{V}$ and $\mathcal{V}'$.

In order to prove this result, we first compare the number of trees needed when only the number of lives of a single term of the sequence $(u_i)$ differs. More precisely, fix $i_0 \in \{1,\ldots,n\}$ and a deterministic sequences of positive integers $(v_i)_{i \neq i_0}$. For $m \in \N$ we denote by $\mathbf{R}_m$ the number of trees required by the heap sorting algorithm to sort the sequence $(u_i, v^m_i)$ where each label $u_i$ has $v_i$ lives except label $u_{i_0}$ which is given $m$ lives:
$$
v^m_i \defeq \left\{
\begin{array}{ll}
v_i & \hbox{if $i\neq i_0$,}\\
m & \hbox{if $i = i_0$.}
\end{array}
\right.
$$

\begin{lem}\label{lemmecouplage} The (deterministic) sequence $(\mathbf{R}_m)_{m\ge 1}$ has the following properties
\begin{itemize}
\item[(i)] For all $m>1$, we have $\mathbf{R}_{m}-\mathbf{R}_{m-1}\in \{-1,0\}$.
\item[(ii)] If $\mathbf{R}_{m}-\mathbf{R}_{m-1}=0$ then for all $m'\ge m$, $\mathbf{R}_{m'}-\mathbf{R}_{m'-1}=0$.
\end{itemize}
\end{lem}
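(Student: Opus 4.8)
The plan is to realise all the runs on a common deterministic label sequence $(u_i)$ and to compare them through a single ``second class'' discrepancy that propagates as the labels are inserted one by one. Throughout, for a run $X$ and a vertex $x$ I write $X(x)$ for the number of lives remaining at $x$ in $X$ at the current step, and I note at the outset that the algorithm's decision at each label depends only on which vertices are \emph{alive} (have $X(x)\ge 1$) and on their positions, never on the tree structure.

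For (i) I would couple the run $A$ with $m-1$ lives at $u_{i_0}$ and the run $B$ with $m$ lives, keeping every other initial number of lives fixed. The two runs make identical decisions as long as $u_{i_0}$ has the same status in both, and this persists exactly until the step at which $u_{i_0}$ receives its $(m-1)$-st child; from then on $u_{i_0}$ is dead in $A$ but alive (with one life) in $B$, and the configurations agree everywhere except at one discrepancy vertex $p$ where $B(p)=A(p)+1$ (so $p$ is always alive in $B$). I would then check, by cases on the next label that would attach to $p$ in $B$, that this discrepancy either stays put, or migrates strictly to the left (when $p$ is dead in $A$ and the incoming label is rerouted in $A$ to the first alive vertex below $p$), or finally ``exits'' on the left, which is the unique event in which $A$ opens a new tree while $B$ does not. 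After such an exit $A$ and $B$ have identical sets of alive vertices, so no further discrepancy can arise; hence, writing $d_m\defeq\mathbf{R}_{m-1}-\mathbf{R}_m$, we get $d_m\in\{0,1\}$, which is (i).

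For (ii) — equivalently, convexity of $m\mapsto\mathbf{R}_m$ — I would run three copies $A,B,C$ with $m-1,m,m+1$ lives at $u_{i_0}$ simultaneously. The analysis of (i) applied to $(A,B)$ yields a discrepancy vertex $p$ and applied to $(B,C)$ yields a discrepancy vertex $p'$, while one maintains the pointwise domination $A(x)\le B(x)\le C(x)$; in particular $p$ is always alive in $B$ and $p'$ always alive in $C$. The crux is the order invariant $p\le p'$, strict whenever the $(B,C)$-discrepancy is \emph{active} (i.e.\ $p'$ dead in $B$). Granting it, assume $d_m=0$: then the $(A,B)$-particle never exits, so $p$ persists as a genuine vertex, alive in $B$, for all time. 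If the $(B,C)$-particle ever exited at a label $u$ — the only way to obtain $d_{m+1}=1$ — then $B$ would have no alive vertex below $u$; but $p$ is alive in $B$ and $p<p'<u$ by the invariant, a contradiction. Thus $d_{m+1}=0$, which is (ii).

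The hard part will be establishing the invariant $p\le p'$, which I would prove by induction over the labels, distinguishing whether the incoming label is critical (triggers a migration) for neither, exactly one, or both particles. The delicate case is when a single label is critical for both: there one verifies that the $(A,B)$-particle jumps from $p$ to the first alive vertex strictly below $p$ in $A$ while the $(B,C)$-particle jumps from $p'$ to the \emph{old} position $p$, so that $p_{\mathrm{new}}<p'_{\mathrm{new}}$; when the label is critical only for $(B,C)$ one uses $A(x)\le B(x)$ to see that the landing vertex $\beta$ of that particle satisfies $p\le\beta<p'$, again preserving the order. Finally I would dispose of the degenerate cases separately: if $u_{i_0}$ never accumulates $m-1$ children then $A=B$, and if it never accumulates $m$ children then $B=C$, so $d_m=0$ forces $d_{m+1}=0$ for free. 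The main obstacle is therefore the bookkeeping of this three-way coupling and, specifically, ruling out that the two discrepancy vertices cross; once $p\le p'$ is in hand, both (i) and (ii) follow cleanly.
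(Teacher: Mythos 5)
Your proposal is correct and is essentially the paper's own argument in different clothing: your discrepancy vertex is exactly the paper's ``second class particle'' (its location after $n$ insertions is the horizontal position of that particle at height $n$ in the graphical representation), and your order invariant $p\le p'$, with strictness when the $(B,C)$-discrepancy is active, is exactly the paper's key claim that the trajectories of second class particles of consecutive indices never cross, from which (ii) follows in the same way. The only cosmetic differences are that the paper obtains (i) in one line from the optimality of the greedy algorithm rather than from the coupling, and it justifies the non-crossing by remarking that solid vertical lines can only become dotted (never the converse) when a life is removed, whereas you verify the same fact by direct case analysis on the three-way coupling.
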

\begin{proof} By construction the heap sorting algorithm yields the minimum number of trees needed to sort a sequence of labels with a prescribed numbers of lives. Thus, it is clear that removing one life from a label can either leave the number of trees unchanged or increase this number by one. Hence, Point (i) is straightforward.

To prove (ii), we use a strategy similar to that explained in Section \ref{sect-augmentationdroite}. For $m\ge 1$, consider the graphical representation of the sequence of atoms $(u_i,i,v^m_i)_{1\le i\le n}$ in $[0,1]\times[0,n+1]\times\N$. We call these graphical representations $\mathcal{G}^m$. For $m\ge 2$, we define a \emph{second class particle of index $m$} that tracks the modifications required to compute the graphical representation $\mathcal{G}^{m-1}$ from $\mathcal{G}^{m}$. This second class particle has the following trajectory inside the box $[0,1]\times [0,n+1]$:
\begin{itemize}
\item It starts from the point $(u_{i_0}, i_0 )$.
\item It moves upwards until it either reaches the top of the box or until the vertical line disappears (or becomes dotted if dead particles are represented).
\item Upon reaching the end of a vertical line, it starts moving horizontally to the left until it either reaches the left side of the box or a solid vertical line. In the latter case, it starts again moving upwards and the procedure above is repeated.
\end{itemize}
Just as in Section \ref{sect-augmentationdroite}, it is easy to see that $\mathcal{G}^{m-1}$ may be obtained from $\mathcal{G}^m$ by altering the graphical representation along the path of this particle.  In particular, 
\begin{itemize}
\item If the second class particle of index $m$ exits the box by its top side then $\mathbf{R}_{m-1}=\mathbf{R}_{m}$.
\item If the second class particle of index $m$ exits the box by its left side then $\mathbf{R}_{m-1}=\mathbf{R}_{m}+1$.
\end{itemize}
We now claim that, for any $m$, the trajectory of the second class particle with index $m-1$ is always below that of the second class particle with index $m$. Indeed, the second class particle moves upwards as soon as it crosses a solid vertical line of the graphical representation. Moreover, if we look at the differences between the graphical representations $\mathcal{G}^{m}$ and  $\mathcal{G}^{m-1}$, we see that a solid vertical line may become dotted but the converse is impossible (see Figure \ref{fig:Monotonie2dClasse} for an example). The combination of these two remarks proves the claim. Therefore, if the second class particle of index $m-1$ ends at the top of the box, then the second class particle with index $m$ does too, which proves (ii). 
\begin{figure}
\begin{center}
\includegraphics[height=5cm]{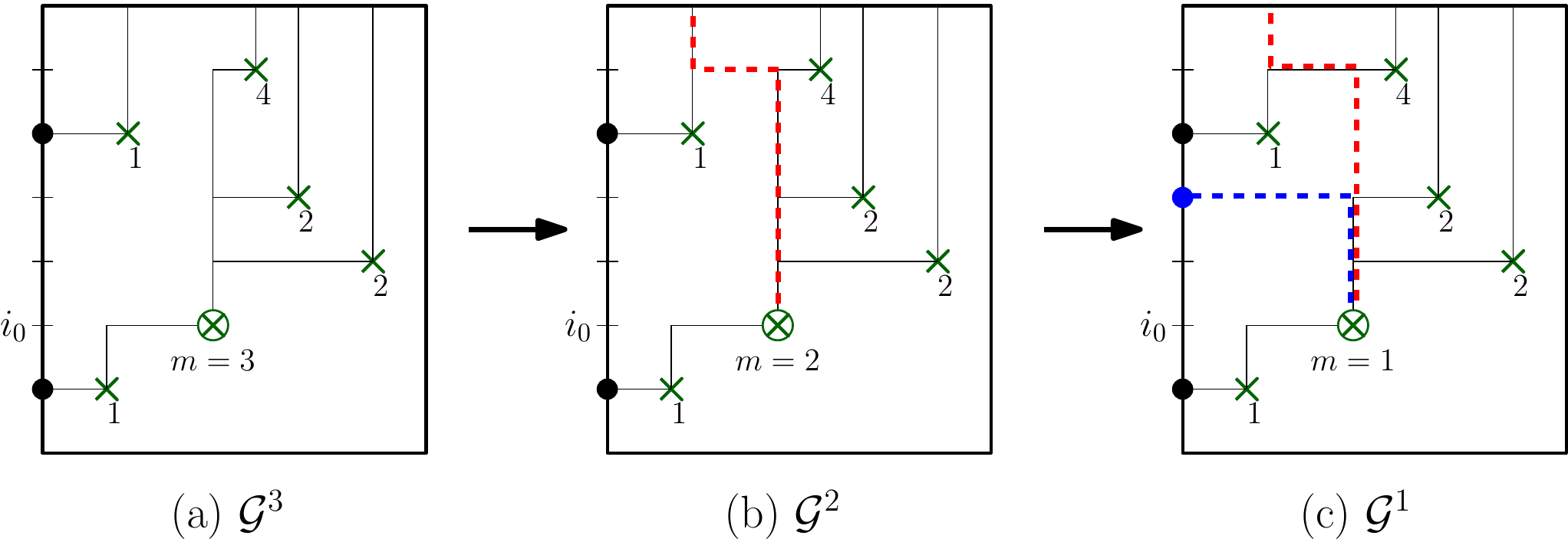}
\caption{Illustration of Lemma \ref{lemmecouplage}. (a) The graphical representation for $m=3$. (b) The second class particle of index $3$ (in dotted red) exits by the top side: no trees are created. (c) The second class particle of index $2$ (in dotted blue) stays below the red trajectory and exits by the left side: a new tree is created.
\label{fig:Monotonie2dClasse}}
\end{center}
\end{figure}
\end{proof}

\begin{proof}[Proof of Proposition \ref{propcouplage}]
As in  Lemma \ref{lemmecouplage},  fix $i_0\in \{1,\ldots,n\}$ and some deterministic sequence of positive integers $(v_i)_{i\neq i_0}$. Let $\nu$ (resp. $\nu'$) be a random variable with distribution $\mu$ (resp. $\mu'$). Denote by $\mathbf{R}_{\nu}$ (resp. $\mathbf{R}_{\nu'}$) the number of trees required to sort the sequence $(u_i)_{1\le i \le n}$ when the number of lives of $u_i$ is $v_i$ for $i\neq i_0$ and $\nu$ (resp. $\nu'$) for $i=i_0$ . We first prove that 
\begin{equation}\label{eq-inegalitevie}
\E[\mathbf{R}_{\nu'}]\le \E[\mathbf{R}_{\nu}].
\end{equation}
To see why this holds, consider the (deterministic) sequence $(\mathbf{R}_m)_{m\ge 1}$ defined in Lemma \ref{lemmecouplage}. We have two cases. 
\begin{itemize}
\item[(a)] $\mathbf{R}_{\ell}=\mathbf{R}_{\ell+1}$. Using that $\mbox{supp}(\nu') \in \{\ell,\ell+1\}$, we get $\E[\mathbf{R}_{\nu'}]=\mathbf{R}_\ell$. Besides, according to (ii) of Lemma \ref{lemmecouplage}, we have  $\mathbf{R}_m\ge \mathbf{R}_\ell$ for all $m\ge 1$. Thus, $\E[\mathbf{R}_{\nu}] \ge \mathbf{R}_\ell =\E[\mathbf{R}_{\nu'}]$.
\item[(b)] $\mathbf{R}_{\ell+1}=\mathbf{R}_{\ell}-1$. According to the previous lemma, we have in this case $\mathbf{R}_\ell=\mathbf{R}_1+1-\ell$. Thus, one the one hand, we find that
\begin{eqnarray*}
\E[\mathbf{R}_{\nu'}] &=& \P\{\nu'=\ell\}\mathbf{R}_\ell+\P\{\nu'=\ell+1\}\mathbf{R}_{\ell+1}\\
&=&\mathbf{R}_\ell-\P\{\nu'=\ell+1\}\\
&=&\mathbf{R}_1+1-\E[\nu'].
\end{eqnarray*}
On the other hand, using (i) of the previous lemma, we get that, for all $m\ge 1$, 
$$\mathbf{R}_m\ge \mathbf{R}_1-(m-1).$$
Thus
\begin{equation*}\label{eqminNV}
\E[\mathbf{R}_{\nu}]\ge \E[\mathbf{R}_1-(\nu-1)]=\mathbf{R}_1+1-\E[\nu].
\end{equation*}
We deduce inequality \eqref{eq-inegalitevie} recalling that $\E[\nu] = \E[\nu']$.
\end{itemize}

More generally, given a sequence of positive integers $(v_1,\ldots,v_n)$, denote by $\mathbf{R}_{(v_1,\ldots,v_n)}$ the number of trees required to sort the sequence $(u_i,v_i)_{1\le i \le n}$. We have 
\begin{eqnarray*}
\E[\mathbf{R}_{\mathcal{V}}]&=&\sum_{v_1,\ldots,v_n\ge 1} \prod_{i=1}^n \P\{\nu_i=v_i\}\mathbf{R}_{(v_1,\ldots,v_n)}\\
&=& \sum_{v_2,\ldots,v_n\ge 1} \prod_{i=2}^n \P\{\nu_i=v_i\}\E[\mathbf{R}_{(\nu,v_2,\ldots,v_n)}]\\
&\ge & \sum_{v_2,\ldots,v_n\ge 1} \prod_{i=2}^n \P\{\nu_i=v_i\}\E[\mathbf{R}_{(\nu',v_2,\ldots,v_n)}]\\
&= & \sum_{v_1,\ldots,v_n\ge 1} \P\{\nu'_1=v_1\} \prod_{i=2}^n \P\{\nu_i=v_i\}\mathbf{R}_{(v_1,\ldots,v_n)}
\end{eqnarray*}
where we used \eqref{eq-inegalitevie} for the inequality. Hence, we conclude, by induction, that
\begin{equation*}
\E[\mathbf{R}_{\mathcal{V}}] \ge \sum_{v_1,\ldots,v_n\ge 1}  \prod_{i=1}^n \P\{\nu'_i=v_i\}\mathbf{R}_{(v_1,\ldots,v_n)}=\E[\mathbf{R}_{\mathcal{V}'}]
\end{equation*}
\end{proof}

\begin{rem}\label{rem:contrex} We give an example of two offspring distributions $\mu$ and $\mu'$ such that 
$$
\E[\nu] < \E[\nu'] \quad\hbox{yet}\quad c_\mu < c_{\mu'}.
$$  
Let $\mu$ to be the geometric distribution with parameter $\frac{4}{21}$. Let $\mu'$ be the distribution such that $\mu'(1) = \mu'(10) = \frac{1}{2}$. Then, we have $\E[\nu] = \frac{21}{4} < \frac{11}{2} = \E[\nu']$.
We can lower bound $c_{\mu'}$ using Equation \eqref{limDn} and the fact that $\E[D_n]$ is non-decreasing. In particular, we have $\E[D_2] = \frac{1}{4}$ hence $c_{\mu'} \geq \frac{5}{4}$. But, on the other hand, we have $c_\mu = \frac{21}{17}$ according to Theorem \ref{theogeometric}. 
\end{rem}

\begin{proof}[Proof of Theorems \ref{theoesperance}, \ref{theoregulier} and (ii) of Theorem \ref{theocontinu}] According to Lemma \ref{lem:cvesperance}, for any offspring distribution $\mu$, the constant 
$$c_\mu \defeq \lim_{n\to\infty} \frac{\E[\mathbf{R}(n)]}{\log n}= \lim_{t\to\infty} \frac{\E[R(t)]}{\log t}\in (1,\infty]$$ 
exists. It remains to establish an upper bound. We first consider the case of the regular trees $\mu=\delta_k$ and show that $c_\mu$ is no larger than $k/(k-1)$.

Let $\mathcal{U}=(U_i)_{1\le i \le n}$ be an i.i.d. sequence of uniform random variables on $[0,1]$. Denote by $\mathbf{R}(n)$ the number of trees required by the heap sorting algorithm for the sequence $(U_i,\nu_i)_{1\le i \le n}$ when the number of lives $(\nu_i)_{1\le i \le n}$ are i.i.d with geometric distribution with mean $k$. Similarly, let $\mathbf{R}'(n)$ be the number of trees needed to sort the same sequence $\mathcal{U}$ when each label $U_i$ has exactly $k$ lives. Using Proposition \ref{propcouplage}, we have for every realization of $\mathcal{U}$, 
$$\mathbf{R}'(n)\le \E[\mathbf{R}(n) | \mathcal{U}].$$
Recalling that $\E[\mathbf{R}(n)]/\log n$ converges to $1/(1-1/k)$ and taking the expectation with respect to $\mathcal{U}$ yield
$$\limsup_{n\to \infty} \frac{\E[\mathbf{R}'(n)]}{\log n}\le \frac{1}{1-1/k}$$
which concludes the proof of Theorem \ref{theoregulier}.

It remains to prove Theorem \ref{theoesperance} \emph{i.e.} to show that $c_\mu$  is finite for any reproduction law $\mu$ on $\mathbb{N}$ that is not the Dirac mass at $1$. Fix such an offspring distribution and let $\mu'$ be the probability distribution on $\{1,2\}$ such that $\mu'(1)=\mu(1)$. Let also $\mu''$ denote the geometric law with same expectation as $\mu'$. Finally, let $\mathbf{R}(n)$, $\mathbf{R}'(n)$ and $\mathbf{R}''(n)$ denote the number of trees required to sort the sequence $\mathcal{U}$ when the reproduction law is respectively $\mu$, $\mu'$ and $\mu''$. Since $\mu$ dominates $\mu'$, we can couple these processes on the same space in a trivial way such that $\mathbf{R}(n)\le \mathbf{R}'(n)$ a.s.  Using again  Proposition \ref{propcouplage} and Theorem \ref{theogeometric}, we conclude that $$\E[\mathbf{R}(n)]\le \E[\mathbf{R}'(n)]\le \E[\mathbf{R}''(n)]\sim \frac{\log n}{1-1/c} $$ where $c$ is the expectation of $\mu''$.

\end{proof}

\subsection{Half plane representation for any measure $\mu$}

In Section \ref{sec:halfplanegeom}, we showed that, in the geometric  case, if we take the graphical representations $\mathcal{G}_b$ on quarter planes $\mathbb{H}_b \defeq (-\infty,b]\times (0,\infty)$, then, as $b\to+\infty$, the representations converge locally to a random stationary representation on the whole upper half-plane $\mathbb{H}\defeq \R\times (0,\infty)$. We now prove that the same result holds for any offspring distribution.

\begin{theo}[\textbf{Stationary half plane representation: general offspring distribution}]\label{Theo:HalfPlaneGeneral}
Assume that $\mu\neq \delta_1$ and recall that $\Xi$ denotes a PPP on $\mathbb{H}\times\N$ with intensity $du \otimes dt \otimes \mu$. Then, as $b$ goes to $+\infty$, the graphical representations $\mathcal{G}_b$ on $\mathbb{H}_b$ converge locally, almost surely, to a random graphical representation $\mathcal{G}_\infty$ on $\mathbb{H}$. That is, for a.s. every realization of $\Xi$ and for any closed box $\mathbb{B} \defeq [x,y]\times [s,t] \subset \mathbb{H}$, the restriction of $\mathcal{G}_b$ to $\mathbb{B}$ becomes constant for $b$ large enough. The random variable $\mathcal{G}_\infty$ has the following properties:
\begin{enumerate}
\item The law $\mathcal{G}_\infty$ is invariant by horizontal translation. 
\item $\mathcal{G}_\infty$ defines almost surely a single infinite random tree embedded in $\mathbb{H}$ such that
\begin{itemize}
\item[\textup{(a)}] The set of vertices of the tree is exactly the positions of the atoms of $\Xi$ and the number of children of each vertex is the initial 
number of lives of the atom. 
\item[\textup{(b)}] The tree is rooted at $(-\infty,0)$. It grows upward and to the right.
\end{itemize}
\end{enumerate}
\end{theo}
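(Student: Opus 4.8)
The plan is to follow the architecture of the proof of Theorem \ref{Theo:HalfPlane}, replacing every step that relied on the explicit geometric stationary law by a soft argument based on monotonicity, on the scaling symmetry of $\Xi$, and on the finiteness of $c_\mu$ established in Section \ref{SousSec:PreuveCasRegulier}. As in the geometric case, the whole statement reduces to controlling, for fixed $0<s<t$, the set of horizontal lines of $\mathcal{G}_b$ crossing the vertical segment $\{0\}\times[s,t]$; I will write $\bar{\mathcal{R}}_{[s,t]}(b)$ for this set and $\mathcal{Q}$ for the (a priori only monotone) limit of $\bar{\mathcal{R}}_{(0,\infty)}(b)$ as $b\to\infty$. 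First I would recall that the second class particle description of Section \ref{sect-augmentationdroite} is valid for an arbitrary offspring law, so growing the strip to the right can only create crossings of $\{0\}$ and never destroy them; hence $\bar{\mathcal{R}}_{(0,\infty)}(b)$ is non-decreasing in $b$ and $\mathcal{Q}$ exists as a point measure on $(0,\infty)$, possibly with accumulation points. Exactly as in the geometric proof, once $\mathcal{Q}$ is shown to be locally finite on $(0,\infty)$ the restriction of $\mathcal{G}_b$ to any fixed box stabilises for $b$ large (choosing $s$ below the lowest atom in the relevant column removes all sources, so the trace is determined by the crossings and the atoms inside), which yields the almost sure local convergence.

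The key new input is the local finiteness of $\mathcal{Q}$, and here I would exploit the scaling invariance of the model. The map $\Phi_c\colon(u,h)\mapsto(cu,h/c)$ preserves $\Xi$ in law and intertwines the right-growing construction on $(-\infty,b]$ with the one on $(-\infty,cb]$, sending a crossing of $\{0\}$ at height $h$ to a crossing of $\{0\}$ at height $h/c$. Letting $b\to\infty$ shows that $\mathcal{Q}$ is invariant in law under the height dilation $h\mapsto h/c$ for every $c>0$. Consequently its intensity measure $\E[\mathcal{Q}]$ is a scale invariant measure on $(0,\infty)$, hence of the form $\kappa\,dh/h$ for some $\kappa\in[0,\infty]$, and local finiteness is equivalent to $\kappa<\infty$. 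Combined with the duality $\bar{\mathcal{R}}_{[s,t]}(b)\overset{\mathrm{law}}{=}\mathcal{R}(b,t)\cap[s,t]$ and monotone convergence, this already gives the clean identity $\E[\mathcal{Q}([s,t])]=\kappa\log(t/s)$.

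It remains to prove $\kappa<\infty$, and this is where I expect the real difficulty to lie. The natural guess is $\kappa=c_\mu$: heuristically, the crossings of a vertical line in the stationary picture are created, as the height grows, at the same rate $c_\mu/h$ at which the finite volume process creates new trees, a rate read off from $\E[R(t)]\sim c_\mu\log t$. To turn this into a bound I would slice $(0,1)$ into the geometric bands $[a^{k+1},a^k]$; scale invariance of the limit gives $\E[\mathcal{Q}([a^{k+1},a^k])]=\kappa\log(1/a)$ for every $k$, while the total number of crossings stays at most $\E|\mathcal{R}(b,1)|=\E[R(b)]\le c_\mu(1+\log b)$ by Lemma \ref{lem:cvesperance} (itself a consequence of the comparison with the geometric case through Proposition \ref{propcouplage}). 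Since at scale $b$ the number of already-saturated bands grows like $\log b$ and each contributes essentially $\kappa\log(1/a)$ to this total, comparison with the logarithmic upper bound forces $\kappa\le Cc_\mu<\infty$. The delicate point is the quantitative control of how fast each band saturates, i.e. an effective, uniformly summable version of the per-band monotone convergence; this bookkeeping, rather than any conceptual obstruction, is the crux.

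Once $\kappa<\infty$ is in hand the remaining assertions follow as in the geometric case and do not use the explicit law. Horizontal translation invariance of $\mathcal{G}_\infty$ is inherited from that of $\Xi$ together with the fact that the local limit does not depend on the reference column. For the single tree property I would argue directly on $\mathcal{G}_\infty$: given two ancestral lines, the number of crossings lying between them is non-increasing as the strip grows to the left and drops strictly whenever an atom with at least two lives falls between the lower line and the crossing just beneath it; since $\mu\neq\delta_1$ we have $\P(\nu\ge2)>0$, so such atoms occur infinitely often and the two lines eventually coalesce, exactly as in assertion (A) of the proof of Theorem \ref{Theo:HalfPlane}. This shows $\mathcal{G}_\infty$ is almost surely a single tree; as it contains every atom of $\Xi$ and grows upward and to the right by construction, it is necessarily rooted at $(-\infty,0)$, giving $2.$ (a)--(b).
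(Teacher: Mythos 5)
Your overall architecture is the paper's: monotonicity of the crossing sets via the second class particle, stabilisation of finite boxes once the limiting crossing set is locally finite, translation invariance inherited from $\Xi$, and the coalescence argument (A) for the single-tree property (the paper indeed reuses the proof of Theorem \ref{Theo:HalfPlane} \emph{mutatis mutandis} for assertion 2). The only step where you genuinely depart from the paper is the local finiteness itself, and that is exactly the step you leave open: as written, nothing in your text actually bounds $\kappa$, so this is a real gap. It is, however, closable with the tools you have already set up, and more easily than you fear: no ``uniformly summable'' refinement of the per-band convergence is needed, because your scale invariance makes every band an exact scaled copy of a single one. Concretely, the dilation $(u,h)\mapsto(a^{k}u,a^{-k}h)$ yields
$$
\E\bigl[|\bar{\mathcal{R}}_{[a^{k+1},a^{k}]}(b)|\bigr]=h(a^{k}b),
\qquad
h(x)\defeq\E\bigl[|\bar{\mathcal{R}}_{[a,1]}(x)|\bigr],
$$
where $h$ is non-decreasing (monotonicity in $b$) and $h(x)\uparrow\E\bigl[\mathcal{Q}([a,1])\bigr]=\kappa\log(1/a)$ by monotone convergence. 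Fix $M>1$. Every band with $a^{k}b\ge M$ contributes at least $h(M)$, and there are at least $(\log b-\log M)/\log(1/a)$ such bands; since the crossings of $\{0\}\times(0,1]$ have the law of the roots of the finite-strip process, the bound of Lemma \ref{lem:cvesperance} gives
$$
c_\mu(1+\log b)\;\ge\;\E[R(b)]\;=\;\sum_{k\ge0}h(a^{k}b)\;\ge\;h(M)\,\frac{\log b-\log M}{\log(1/a)}.
$$
Dividing by $\log b$, letting $b\to\infty$ and then $M\to\infty$ gives $\kappa\le c_\mu<\infty$. Two small corrections along the way: $\kappa<\infty$ is sufficient for a.s.\ local finiteness, not equivalent to it, but sufficiency is all you need; and running the same slicing in the other direction (using $h(x)\le\E[R(x)]\le x$ for $x\le 1$ to control the unsaturated bands) gives $\E[R(b)]\le\kappa\log b+O(1)$, hence in fact $\kappa=c_\mu$, so your route identifies the limiting crossing intensity as exactly $c_\mu\,dh/h$ --- slightly more information than the paper extracts at this point.

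By contrast, the paper never introduces the limit object $\mathcal{Q}$ or its scale invariance: it uses the same space-time dilation to identify $|\bar{\mathcal{R}}_{[s,t]}(b)|$ in law with the increment $R(bt)-R(bs)$, and bounds its expectation uniformly in $b$. The uniform bound comes not from the global estimate $\E[R(b)]\le c_\mu(1+\log b)$ that you invoke, but from its incremental source: \eqref{recDn} together with the monotone convergence of $\E[D_n+1]$ to $c_\mu<\infty$ (Equation \eqref{limDn} and Theorem \ref{theoesperance}) gives $\E[\mathbf{R}(n+1)]\le\E[\mathbf{R}(n)]+c_\mu/(n+1)$; then, writing $R=\mathbf{R}\circ K$ with $K$ an independent unit-rate Poisson process and splitting on the event $\{K(bs)\ge bs/2\}$, one obtains $\limsup_{b}\E[R(bt)-R(bs)]\le c_\mu\log\bigl(\tfrac{2t-s}{s}\bigr)$, and a.s.\ finiteness of the monotone limit follows. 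Both routes consume the same input, namely the finiteness of $c_\mu$ from Theorem \ref{theoesperance} (hence ultimately Proposition \ref{propcouplage} and the geometric case), so your detour through the scale-invariance dichotomy buys no extra generality; what it buys is the exact limiting intensity noted above, at the price of the bookkeeping you correctly sensed was the crux --- but which, once the bands are recognised as scaled copies of one another, takes three lines rather than being a difficulty.
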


\begin{proof}As in the proof of Theorem \ref{Theo:HalfPlane},
in order to prove that the restriction of  $\mathcal{G}_b$ to  a finite box $\mathbb{B}$ becomes constant for large $b$,  we only need to show that, for any $0<s<t$, the set of heights   $\bar{\mathcal{R}}_{[s,t]}(b)$ of the horizontal lines intersecting the vertical segment $\{0\}\times [s,t]$ in $\mathcal{G}_b$ becomes constant for large $b$.
Furthermore, as in the geometric case, $\bar{\mathcal{R}}_{[s,t]}(b)$ is non decreasing in $b$ for the partial inclusion order thus converges to some limit. Hence, we just  need to prove that the number of these horizontal lines remains bounded as $b$ tends to infinity.

For any fixed $b\ge 0$, using a space-time dilation of the box $[0,b]\times [0,t]$ to map it to the box $[0,1] \times [0,bt]$, we see that the number of lines  $|\bar{\mathcal{R}}_{[s,t]}(b)|$ is equal in law to $R(bt)-R(bs)$, the number of roots which appear between times $bs$ and $bt$ in an $\mu$-Hammersley defined on the strip $[0,1]\times [0,\infty)$. We now show that 
  $$ \limsup_{b \to \infty}\E[R(bt)-R(bs)]<\infty,$$
which in turn will imply 
$$ \lim_{b \to \infty} |\bar{\mathcal{R}}_{[s,t]}(b) |<\infty \quad {a.s.}$$
proving the convergence of the graphical representation as $b$ tends to infinity. 
According to \eqref{eq-discretcontinu}, we  can write  $$R(bt)-R(bs)=\mathbf{R}(K(bt))-\mathbf{R}(K(bs)),$$ where $K(\cdot)$ is a Poisson process with unit intensity independent of $\mathbf{R}(\cdot)$. 

Recall the recurrence equation \eqref{recDn} between $\E[\mathbf{R}(n+1)]$ and $\E[\mathbf{R}(n)]$. Using that $\E[D_n+1]$ is non-decreasing with $n$ and converges to $c_\mu< \infty$ (\emph{c.f.} Equation \eqref{limDn} and Theorem \ref{theoesperance}), we get
$$\E[\mathbf{R}(n+1)]\le \E[\mathbf{R}(n)] + \frac{c_\mu}{n+1}.$$
This yields 
$$\E[R(bt)-R(bs)]\le c_\mu \E\left[\sum_{n=K(bs)}^{K(bt)-1}\frac{1}{n+1}\right] = c_\mu \E\left[\sum_{n=1}^{K(bt)-K(bs)}\frac{1}{n+K(bs)}\right].$$
Set $K'\defeq K(bt)-K(bs)$. We split the expectation above according to whether $K(bs)$ is smaller of larger than $bs/2$. Using the fact that $K'$  is independent of $K(bs)$ and a has Poisson distribution with parameter $b(t-s)$ we find that
\begin{eqnarray*}
\E[R(bt)-R(bs)] &\le & c_\mu \E\left[\mathbf{1}_{\big\{K(bs)\ge \frac{bs}{2}\big\}}\sum_{n=1}^{K'}\frac{1}{n+bs/2}\right]+ c_\mu\E\left[\mathbf{1}_{\big\{K(bs)< \frac{bs}{2}\big\}}\sum_{n=1}^{K'}\frac{1}{n}\right]\\
& \le & c_\mu \E\left[\log\Big( \frac{bs}{2}+ K'\Big)-\log\Big(\frac{bs}{2}\Big)\right]+ c_\mu \P\Big\{K(bs)<\frac{bs}{2}\Big\} \E\Big[1+ \log K'\Big]\\
& \le & c_\mu\left(\log( \frac{bs}{2}+  \E [K'])-\log(\frac{bs}{2})\right)+ c_\mu \P\Big\{K(bs)<\frac{bs}{2}\Big\} \Big(1+ \log  \E[K']\Big)\\
& \le & c_\mu \log\Big(\frac{2t-s}{s}\Big)+ c_\mu \P\Big\{K(bs)<\frac{bs}{2}\Big\} \Big(1+ \log  (b(t-s))\Big).
\end{eqnarray*}
 Since  $\P\{K(bs)<\frac{bs}{2}\}$ decreases exponentially in $b$, we get, for any $t>s>0$
$$\limsup_{b\to \infty} \E[|\bar{\mathcal{R}}_{[s,t]}(b)|] =  \limsup_{b\to \infty}   \E[R(bt)-R(bs)] < c_\mu \log\Big(\frac{2t-s}{s}\Big) < \infty.$$ 
Concerning $2.$, we remark that the arguments invoked to show that $\mathcal{G}_\infty$ defines a unique tree during proof of Theorem \ref{Theo:HalfPlane} do not use the specific form of the offspring distribution. Hence the exact same arguments also hold here \emph{mutatis mutandis}. Finally, the other properties of the tree are straightforward. 
\end{proof}

\begin{rem}
\begin{itemize}
\item Although we do not prove it, it seems intuitively clear that, just as in the geometric case, each forward ray in the infinite stationary tree defined by $\mathcal{G}_\infty$ must converge to some finite vertical asymptote.
\item As in Theorem \ref{Theo:HalfPlane}, the law of the restriction of $\mathcal{G}_\infty$ to a finite box $\mathbb{B} \defeq [x,y]\times [s,t]$ corresponds to the law of the graphical representation inside $\mathbb{B}$ with sources and sinks processes $\mathcal{C}$ and  $\mathcal{S}$ which are independent of the atoms inside $\mathbb{B}$ but whose joint law depends on the location of the box inside the upper half plane $\mathbb{H}$. However, contrarily to the geometric case, we do not believe that $\mathcal{C}$ and $\mathcal{S}$ should be independent anymore. This additional complexity makes challenging an explicit computation of the marginal laws.
\end{itemize}
\end{rem}

\section{Comments and questions}

There are several natural questions left open concerning the heap patience sorting algorithm  and Hammersley's tree process. Below are a few that we think might be worth further investigation. 
\begin{enumerate}
\item  Theorem \ref{Theo:HalfPlaneGeneral} states that, for any offspring distribution $\mu\neq \delta_1$, the stationary half-plane representation $\mathcal{G}_\infty$ exists. In particular, the set $\bar{\mathcal{R}}_{(0,+\infty)}(\infty)$ of the heights of the horizontal lines in $\mathcal{G}_\infty$ that intersect the vertical half-line $\{0\}\times (0,\infty)$ necessarily defines a stationary measure for the root process. Computing this stationary measure would yield the exact value of $c_\mu$ since we must have
$$
\left|\bar{\mathcal{R}}_{(1,t)}(\infty)\right| \underset{t\to\infty}{\sim} c_\mu \log t \quad\hbox{a.s.}
$$ 
This would be particularly interesting in the case of regular trees. However, as we already mentioned, this measure may have a complicated form and seems difficult to track. Are there other distributions apart from the geometric law that are exactly solvable ?  

\item In the regular case $\mu=\delta_k$, we showed that $1<c_\mu<k/(k-1)$. However, these bounds are not sharp. In particular, in the binary case $k=2$, some non rigorous heuristic leads us to believe $c_\mu$ should be strictly smaller than the golden ratio, disproving Istrate and Bonchis'conjecture.  On the other hand, for large $k$, it should be possible to improve the upper bound since we believe that $c_\mu$ is of order $1+1/2^k$ when $k$ becomes large. 
\item  Except for the geometric case, we have proved the convergence of $\mathbf{R}(n)/\log n$ only in average. However, the same result must also hold almost surely for any offspring distribution. This is a work in progress.
\item In this paper, we assumed that the trees are without leaves (\emph{i.e} $\mu(0)=0$ so a label cannot be stillborn). Yet, it seems interesting to lift up this restriction. Then, there should exist three different regimes according whether the G-W tree is sub-critical, critical or super-critical. In the sub-critical case, the number of heaps will increase linearly  since each tree contains, in average, a finite number of labels. In the super-critical case, the number of trees should again have logarithmic growth.  The critical case seems the most interesting one. We expect the number of trees to be polynomial in the number of labels. Does the number of heaps increase as $\sqrt{n}$ as in Ulam's problem ? Some computer simulations seem to support this claim. Just as in the super-critical case, we can try to answer this question first for a particular offspring distribution. Here, the natural candidate is the geometric distribution starting from $0$ with parameter $1/2$. It seems that, again, explicit calculations may be achievable and could also provide interesting insight to the general case. This is also a work in progress.    
 
\item Finally, we studied here the number of heaps created but, from a algorithmic point of view, other quantities might be relevant. The initial question of Byers \emph{et al.} \cite{Byersetal} concerns the probability of a finite sequence to be \emph{heapable} \emph{i.e.} to be sorted into a single tree. With our notations, this question is equivalent to finding the asymptotic of $\P\{\mathbb{R}(n) = 1\}$ as $n$ tends to infinity. Of course, we have $\P\{\mathbb{R}(n) = 1\} \leq 1/n$ since the first entry must be the minimum of the first $n$ labels. With a little more work, it is not difficult to show that, in the binary case,
$$
e^{-n^\alpha} \leq \P\{\mathbb{R}(n) = 1\} \leq  \frac{c}{n^2}
$$  
for some $c>0$ and some $\alpha <1$ but we do not know the exact asymptotic. Is it polynomial in $n$ ? 
\end{enumerate}

\begin{ack}
The authors warmly
thank  Nathanaël Enriquez  for stimulating    discussions on the topic.
\end{ack}

\end{document}